\DeclareMathAlphabet{\mathbbm}{U}{bbm}{m}{n}
\newcommand{\Ifempty}[3]{\ifthenelse{\equal{#1}{}}{#2}{#3}}
\newcommand{\NN}{\mathbb{N}}
\newcommand{\Mod}[1]{\mathrm{#1}}
\newcommand{\mM}{\Mod{M}}
\newcommand{\Cat}[1]{\mathcal{#1}}
\newcommand{\cC}{\Cat{C}}
\newcommand{\Df}[1]{\Cat{D}ef (#1)}
\newcommand{\cop}{\otimes} 
\newcommand{\Th}[1]{\mathcal{#1}}
\newcommand{\tT}{\Th{T}}
\newcommand{\tS}{\Th{S}}
\newcommand{\tC}{\Th{C}}
\newcommand{\tG}{\Th{G}}
\newcommand{\w}{\omega}
\newcommand{\Dset}[1]{\mathbf{#1}}
\newcommand{\dX}{\Dset{X}}
\newcommand{\dY}{\Dset{Y}}
\newcommand{\dG}{\Dset{G}}
\newcommand{\dH}{\Dset{H}}
\newcommand{\dK}{\Dset{K}}
\newcommand{\dL}{\Dset{L}}
\newcommand{\dF}{\Dset{F}}
\newcommand{\dS}{\Dset{S}}
\newcommand{\dQ}{\Dset{Q}}
\newcommand{\dC}{\Dset{C}}
\newcommand{\dP}{\Dset{P}}
\newcommand{\iHom}{\underline{Hom}}
\newcommand{\iIso}{\underline{Iso}}
\newcommand{\iAut}{\underline{Aut}}
\newcommand{\dHom}{\Dset{\iHom}}
\newcommand{\dIso}{\Dset{\iIso}}
\newcommand{\dAut}{\Dset{\iAut}}
\newcommand{\cd}{\Dset{c}}
\newcommand{\dd}{\Dset{d}}
\newcommand{\der}{\partial}
\newcommand{\Cyc}[2][\dG]{{#1}_{#2}^\circ}
\newcommand{\Horn}[3][\dG]{{\Lambda^{#2}_{#3}}({#1})} 
\newcommand{\RHom}{\dHom^R}
\newcommand{\LHom}{\dHom^L}
\newcommand{\Under}[2][\dG]{{{#1}_{#2/}}}
\newcommand{\ti}[1]{\tilde{#1}}
\newcommand{\ra}[1][]{\xrightarrow{#1}}
\newcommand{\iso}{\ra[\sim]}
\newcommand{\tstackrel}[2]{%
  \mathrel{\vbox{\offinterlineskip\ialign{%
    \hfil##\hfil\cr
    #1\cr
    \noalign{\kern-.02em}
    #2\cr
}}}}
\newcommand{\weq}{\tstackrel{\scalebox{2.2}[1]{\(\!\scriptscriptstyle\sim\)}}{\(\dashrightarrow\)}}
\newcommand{\mt}{\mapsto}
\newcommand{\1}{\mathbf{1}}
\newcommand{\x}{\times}
\newcommand{\Def}[1]{\emph{#1}}
\newcommand{\Set}[1]{\{#1\}}
\newcommand{\St}[2]{\Set{#1|#2}}
\newcommand{\pp}[1]{\langle#1\rangle}
\DeclareMathOperator{\dcl}{dcl}
\DeclareMathOperator{\spec}{spec}
\newcommand{\Omit}[1]{{\widehat{#1}}}
\newcommand{\ci}{\Omit{i}}
\newcommand{\cj}{\Omit{j}}
\newcommand{\ck}{\Omit{k}}
\newcommand{\cl}{\Omit{l}}
\DeclareMathOperator{\Hom}{Hom}
\newcommand{\newthm}[2]{\newtheorem{#1}[subsubsection]{#2}}
\theoremstyle{remark}
\theoremstyle{definition}
\newenvironment{remark}{\begin{rmk}}{\qed\end{rmk}}
\newenvironment{example}{\begin{xmpl}}{\qed\end{xmpl}}
\newenvironment{construction}{\begin{const}}{\qed\end{const}}
\newcommand{\GGII}[1]{\cite[#1]{GGII}}
\newcommand{\HTT}[1]{\cite[#1]{HTT}}
\newcommand{\Kero}[1]{\cite[\href{https://kerodon.net/tag/#1}{#1}]{kerodon}}
\title{Higher internal covers}
\author[M. Kamensky]{Moshe Kamensky}
\address{
  Department of Math \\
  Ben-Gurion University \\
  Be'er-Sheva \\
  Israel
}
\email{\url{mailto:kamensky.bgu@gmail.com}}
\urladdr{\url{https://www.math.bgu.ac.il/~kamenskm}}
\date{\today}
\subjclass{03C40}
\begin{document}

\begin{abstract}
  We define and study a higher-dimensional version of model theoretic 
  internality, and relate it to higher-dimensional definable groupoids in the 
  base theory.
\end{abstract}

\maketitle

\section{Introduction}
The model theoretic notions of \emph{internality} and the binding group came 
up originally in work of Zil'ber on categorical theories (\cite{zilber}), and 
shortly after by Poizat (\cite{poizat}) in the \(\w\)-stable context, where 
it was also noticed that differential Galois theory occurs as a special case.  
The stability hypothesis was completely removed in~\cite[Appendix~B]{comp}, 
where it was shown that the crucial hypothesis is \emph{stable embeddedness} 
of the base theory.

Internality is a condition on a definable set \(\dQ\) in an expansion 
\(\tT^*\) of a theory \(\tT\) to ``almost'' be interpretable in \(\tT\): It 
is interpretable, after adding a set of parameters to \(\tT^*\). In this 
situation, the theory provides a \emph{definable} group \(\dG\) in \(\tT^*\), 
acting definably on \(\dQ\) as its group of automorphisms fixing all elements 
in the reduct \(\tT\). It is important here that the binding group \(\dG\) is 
defined in \(\tT^*\) rather than in \(\tT\): In applications, one often 
understands groups in \(\tT\) better than in \(\tT^*\). The group \(\dG\) 
itself is also internal to \(\tT\), and as a result can be identified with a 
definable group \(\dH\) in \(\tT\), but only non-canonically (and in general, 
only after adding parameters). In the context of differential Galois theory, 
this is related to the fact that the group of points of the (algebraic) 
Galois group of a differential equation does not act on the set of solutions, 
and its identification with the group of automorphisms is not canonical.

The non-canonicity was explained by Hrushovski in~\cite{GGII}, where it is 
shown that the natural object that appears in this context is a definable 
\emph{groupoid} in \(\tT\), with the different groups \(\dH\) occurring as 
the groups of automorphisms of each object. In fact, it is shown there that 
there is a correspondence between groupoids definable in the base theory 
\(\tT\) and internal sorts in expansions of \(\tT\). This correspondence is 
reviewed in~\S\ref{sec:1d}. It is also suggested in~\cite{GGII} that sorts of 
\(\tT^*\) internal to \(\tT\) should be viewed as generalised sorts of 
\(\tT\), obtained as a quotient by the corresponding definable groupoid, just 
like an imaginary sort is obtained from a definable equivalence relation 
(which is a special case). In the current paper we try to follow this 
suggestion, by considering what should be the correct notion of internality, 
after viewing these new sorts as ``legitimate'' definable sorts.

Our approach is motivated by topology. There, a typical example of a groupoid 
arises as the fundamental groupoid \(\pi_1(X)\) of a space \(X\), i.e., the 
groupoid whose objects are the points of \(X\), and whose morphisms are 
homotopy classes of paths. For sufficiently nice \(X\), this groupoid can be 
described in terms of the category of \emph{local systems} (locally constant 
sheaves) on \(X\): each point of \(X\) determines a functor to the category 
of sets, satisfying suitable properties (for example, it commutes with 
products), and each path determines a map between such functors (which 
depends only on the homotopy class since the system is locally constant). We 
propose to view internality as analogous to this picture: definable sets in 
the theory corresponding to a definable groupoid \(\dG\) in \(\tT\) can be 
viewed as local systems (of definable sets) on \(\dG\), and conversely. This 
point of view is explained in~\S\ref{sss:gsets} (the base theory \(\tT\) 
corresponds to a contractible space in this approach, so definable sets in it 
correspond to constant systems).

By definition, the local systems on \(X\) do not tell us anything about the 
homotopy type of \(X\) above homotopical dimension \(1\). To encode higher 
homotopical information, we may try looking at families of spaces rather than 
of sets. A space \(X\) is called \emph{\(n\)-truncated} if \(\pi_k(X,x)\) is 
trivial for all \(k>n\) and base points \(x\in{}X\). Such spaces are 
represented in homotopy theory by what we call in this paper \(n\)-groupoids 
(Definition~\ref{def:ncath} in the definable setting; these are equivalent to 
\(n\)-categories in the sense of~\HTT{\S2.3.4} which are groupoids). In the 
case \(n=1\), these are usual groupoids, and the previous paragraph suggests 
studying them by systems of \(0\)-truncated spaces, i.e., sets. Going one 
dimension higher, one expects to recover \(2\)-groupoids from systems of 
\(1\)-truncated spaces. In the definable context, we decided to identify such 
spaces with internal sorts, we consider ``local systems'' of internal sorts, 
i.e., internal sorts of an expanded theory.

Our main result, Theorem~\ref{thm:main}, shows one direction of this expected 
correspondence: we associate to a \(2\)-groupoid \(\dG\) in the theory 
\(\tT\) a theory \(\tT_\dG\) expanding it, and a collection of internal sorts 
of \(\tT_\dG\), which we view as ``higher local systems''. The statement is 
that the canonical \(2\)-groupoid associated to this datum recovers (up to 
weak equivalence) the original one (part of the other direction is indicated 
briefly, but is mostly left for future work).

We mention that this result is one possible generalisation of the results 
of~\cite{GGII} to higher dimensions.  Other such generalisations include the 
papers~\cite{goodrick},~\cite{rahim} and~\cite{wang}, but they all appear to 
go in different directions. We also mention that in the context of usual 
(rather than definable) homotopy theory, analogous results are well known 
(for example, the main part of Theorem~\ref{thm:main} is really a version of 
the higher dimensional Yoneda lemma), but the methods in the proof of these 
results do not translate easily to the definable setting. In fact, the 
situation here is more similar to the one described in~\HTT{\S6.5}, though 
made simpler by the existence of models (i.e., we have ``enough points'').

\subsection{Structure of the paper}
It is very simple: in Section~\ref{sec:1d}, we review the situation in the 
one dimensional case. This serves both as a motivating analogy and to 
complete some background used later. Most of the material there appears in 
some form in~\cite{GGII} (sometimes implicitly), but we include a few easy 
remarks regarding morphisms and equivalence, interpretation in terms of 
``local systems'', and a different description of the groupoid associated to 
an internal cover (which already appeared slightly differently in~\cite{pv}).

In Section~\ref{sec:2d}, we expose the higher dimensional picture, 
concentrate on dimension \(2\). We first define our higher internal covers, 
then review the theory of (truncated) Kan complexes and \(n\)-categories, 
with a few remarks special to the definable setting, and then prove the main 
result mentioned above (Theorem~\ref{thm:main}).

\subsection{Conventions and terminology}\label{ss:terminology}
For simplicity, we assume our theories \(\tT\) to admit elimination of 
quantifiers. By a \Def{\(\tT\)-structure} we mean a substructure of some 
model of \(\tT\). If \(A\) is such a \(\tT\)-structure, by \(\tT_A\) we mean 
the expansion of \(\tT\) by constants for the elements of \(A\), along with 
the usual axioms describing \(A\). If \(A\) was not mentioned, we mean ``for 
some \(A\)''.

We will also assume \(\tT\) eliminates imaginaries (this could be included in 
the general treatment, but would complicate the exposition). Our usage of 
elimination of imaginaries will often be in the (equivalent) form of the 
existence of internal \(\Hom\)s: For every two definable sets \(\dX\) and 
\(\dY\), there are ind-definable set \(\dHom(\dX,\dY)\) and map 
\(\Dset{ev}:\dX\x\dHom(\dX,\dY)\ra\dY\), identifying the \(A\)-points of 
\(\dHom(\dX,\dY)\), for each \(\tT\)-structure \(A\), with the set of 
\(A\)-definable maps from \(\dX\) to \(\dY\). It follows that the subset 
\(\dIso(\dX,\dY)\) of definable isomorphisms is also ind-definable.

Finally, we assume that each theory is generated by one sort, and finitely 
many relations. Similar to the case in~\cite{GGII}, it can be seen that this 
assumption is not restrictive, since all our constructions commute with 
adding structure.

We recall that an \Def{interpretation} of a theory \(\tT_1\) in another 
theory \(\tT_2\) is a model of \(\tT_1\) in the definable sets of \(\tT_2\): 
it assign definable sets to the elements of the signature of \(\tT_1\), so 
that the axioms in \(\tT_1\) hold (this is often called a \emph{definition} 
in the literature, which is equivalent to an interpretation under our 
assumption of elimination of imaginaries in \(\tT_2\)).  If 
\(i:\tT_1\ra\tT_2\) is such an interpretation, it thus assigns to each 
definable set \(\dX\) of \(\tT_1\) a definable set \(i(\dX)\) of \(\tT_2\).  
Since \(i\) is a model of \(\tT_1\), it assigns definable functions of 
\(\tT_2\) to definable functions of \(\tT_1\), and composition to 
composition, and thus determines a functor from the category \(\Df{\tT_1}\) 
of definable sets of \(\tT_1\) to \(\Df{\tT_2}\).  We will normally identify 
\(i\) with this functor, writing for example \(i(\dX)\) for the 
interpretation of a definable set \(\dX\) of \(\tT_1\). We note that an 
expansion is a particular case of an interpretation.  We remark that not 
every functor from \(\Df{\tT_1}\) to \(\Df{\tT_2}\) arises from an 
interpretation: For example, an interpretation preserves all finite (inverse) 
limits (which always exist in \(\Df{\tT_1}\).) This is the main property of 
such functors that we will use in this paper. A detailed description of 
categories of the form \(\Df{\tT}\) and functors that arise from 
interpretation occurs in~\cite{makkaireyes}, but we will not require it.

Similarly, if \(i,j:\tT_1\ra\tT_2\) are interpretations, a map from \(i\) to 
\(j\) is an elementary map of models (equivalently a homomorphism, by our 
assumption of quantifier elimination), given by definable maps in \(\tT_2\).  
Equivalently, this is a natural transformation of functors, when \(i\) and 
\(j\) are viewed in this way. Such a map is an isomorphism if it has an 
inverse. An interpretation is called a \Def{bi-interpretation} if there is an 
interpretation in the other direction, such that both compositions are 
isomorphic to the identity.

When \(\tT_1\) and \(\tT_2\) are given with fixed interpretations 
\(i_k:\tT\ra\tT_k\) of a theory \(\tT\), we have versions of these notions 
over \(\tT\): an interpretation \(j:\tT_1\ra\tT_2\) is over \(\tT\) if 
\(j\circ{}i_1=i_2\) and given two such interpretations 
\(j_1,j_2:\tT_1\ra\tT_2\), a map \(\alpha:j_1\ra{}j_2\) is over \(\tT\) if 
\(\alpha_{i_1(\dX)}:j_1(i_1(\dX))\ra{}j_2(i_1(\dX))\) is the identity for all 
definable sets \(\dX\) of \(\tT\) (more naturally, we could require a given 
isomorphism from \(j\circ{}i_1\) to \(i_2\), but in practice we can always 
assume it to be the identity, and do that to simplify notation).

\subsection{Acknowledgement and dedication}
I am delighted to thank Tomer Schlank's gang at the Hebrew University (aka 
``Ha'arakya''), and particularly Asaf Horev, Shaul Barkan, Shai Keidar and 
Tomer himself for answering what must have been rather bizarre and elementary 
questions in homotopy theory. They form a very helpful and fun community.

This paper is a late expansion on some vague ideas that I presented in the 
postponed online conference that Honoured Ehud Hrushovski for his 60th 
birthday.  It is a pleasure to thank Udi again for his guidance and dedicate 
the paper (hopefully clearer than my presentation at that talk!) to him.

\section{A review of the classical theory}\label{sec:1d}

\subsection{Stable embeddings and internal covers}
We recall the following classical definition of internal covers:

\begin{defn}
  An expansion \(\tT^*\) of a theory \(\tT\) is an \Def{internal cover} if 
  \(\tT\) is stably embedded in \(\tT^*\), and for some expansion \(\tT^*_A\) 
  of \(\tT^*\) by a set of constants \(A\), each definable set in \(\tT^*_A\) 
  is definably isomorphic to a definable set in \(\tT_{A_0}\), for some set 
  of parameters \(A_0\).
\end{defn}

We recall that \emph{stably embedded} here means that for every definable set 
\(\dX\) in \(\tT\), every subset of \(\dX\) definable in \(\tT^*\) with 
parameters from \(\tT^*\) is definable in \(\tT\), with parameters from 
\(\tT\).

It was noted in~\cite{makkai} that this condition can be reformulated as 
follows: If \(i:\tT\ra\tT^*\) is an expansion and \(\dX,\dY\) are definable 
in \(\tT\), there is a natural (\(\tT^*\)-definable) map 
\(i(\dHom_\tT(\dX,\dY))\ra\dHom_{\tT^*}(i(\dX),i(\dY))\), and \(i\) is stably 
embedded precisely if this map is a bijection (for all \(\dX,\dY\) definable 
in \(\tT\)). We note that in this case, the restriction of this map to the 
subset \(\dIso(\dX,\dY)\) of isomorphisms is also a bijection, and that 
taking into account parameters, no new structure is induced on \(\tT^*\). In 
particular, for any \(\tT\)-structure \(A\), the expansion \({\tT^*}_A\) is 
well defined.

The same definition can be applied to a more general interpretation, so we 
say that an interpretation \(i:\tT_1\ra\tT_2\) is \Def{stable} if for all 
definable sets \(\dX\) and \(\dY\) of \(\tT_1\), the natural 
\(\tT_2\)-definable map
\[i(\dHom_{\tT_1}(\dX,\dY))\ra\dHom_{\tT_2}(i(\dX),i(\dY))\]
is a bijection.  Explicitly, this means that each map definable with 
parameters from \(i(\dX)\) to \(i(\dY)\) (in \(\tT_2\)), ``comes from'' a 
unique map definable with parameters from \(\dX\) to \(\dY\) (in \(\tT_1\)).  
If \(i\) is viewed as a (left-exact) functor, as in~\S\ref{ss:terminology}, 
this is often stated as saying that \(i\) is Cartesian closed.  In these 
terms, the definition of internal covers can be reformulated as follows:

\begin{prop}\label{prp:1intcov}
  An expansion \(\tT^*\) of a theory \(\tT\) is an internal cover if it is 
  stable, and \(\tT^*\) admits a stable interpretation \(p\) in \(\tT_A\) 
  over \(\tT\).
\end{prop}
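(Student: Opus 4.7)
The statement is an equivalence, and both directions build on the already-recorded observation that stable embedding of \(\tT\) in \(\tT^*\) is the same as stability of the expansion interpretation \(i:\tT\ra\tT^*\). For the ``only if'' direction, I would consolidate all parameter sets witnessing internality into a single \(\tT\)-structure \(A\); for each definable \(\dX\) in \(\tT^*\), fix a definable isomorphism \(f_\dX:\dX\iso\dY_\dX\) with \(\dY_\dX\) a \(\tT_A\)-definable set, arranging \(f_{i(\dS)}=\mathrm{id}\) whenever \(\dS\) is definable in \(\tT\). Set \(p(\dX):=\dY_\dX\) on objects and \(p(g):=f_{\dX_2}\circ g\circ f_{\dX_1}^{-1}\) on morphisms. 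A priori \(p(g)\) lives in \(\tT^*_A\), but descends to \(\tT_A\) by stable embedding, so \(p\) is a well-defined interpretation, and it is over \(\tT\) by the choice of the \(f\)'s. Stability of \(p\) is then a diagram chase: both \(p(\dHom_{\tT^*}(\dX_1,\dX_2))=\dY_{\dHom(\dX_1,\dX_2)}\) and \(\dHom_{\tT_A}(p(\dX_1),p(\dX_2))\) are identified in \(\tT^*_A\) with \(\dHom_{\tT^*_A}(\dX_1,\dX_2)\) via the \(f_\bullet\)'s (using stable embedding for the second identification), and the natural comparison map agrees with this identification.

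For the ``if'' direction, stability of \(i\) gives stable embedding directly, and the substance is to verify internality: each \(\dX\) definable in \(\tT^*_A\) should be definably isomorphic, in \(\tT^*_A\), to a \(\tT_{A_0}\)-definable set. Reducing to \(\dX\) definable in \(\tT^*\) (parameters from \(A\) being absorbed), the natural candidate is \(p(\dX)\), and the task reduces to producing a \(\tT^*_A\)-definable isomorphism \(\dX\iso p(\dX)\) for each such \(\dX\).

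Producing this isomorphism is the main obstacle. My plan is to compare, inside \(\tT^*_A\), two interpretations of \(\tT^*\): the inclusion \(i':\tT^*\ra\tT^*_A\), and the composition \(r\circ p\) with the inclusion \(r:\tT_A\ra\tT^*_A\). Both are stable---the first by stability of \(i\) after adding constants, the second by stability of \(p\)---and both agree strictly on the image of \(\tT\) in \(\tT^*_A\) because \(p\) is over \(\tT\). A natural isomorphism \(i'\iso r\circ p\), evaluated at \(\dX\), is precisely the required \(\tT^*_A\)-definable iso \(\dX\iso p(\dX)\). Constructing such a natural isomorphism requires a rigidity statement---roughly, that stable interpretations matching on a stably embedded base are canonically identified---which I expect to follow from the Cartesian-closed property of \(p\) combined with stable embedding. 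This is the technical heart of the argument.
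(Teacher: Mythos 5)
Your first direction is essentially the paper's argument: one fixes a definable isomorphism on a generating sort and extends canonically to an interpretation. The paper's standing convention that \(\tT^*\) is generated over \(\tT\) by a single sort \(\dQ\) makes the coherence of your family of isomorphisms \(f_\dX\) automatic; if you instead choose \(f_\dX\) independently for every definable \(\dX\), you must still check that the resulting \(p\) preserves products and the signature (not merely each object up to isomorphism), which is why one should only choose \(f\) on the generators.

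The second direction has a genuine gap exactly at what you call the technical heart. The rigidity statement you hope for --- that the identity interpretation \(i':\tT^*\ra\tT^*_A\) and \(r\circ p\) are canonically (or even just somehow, over the given parameters) isomorphic because both are stable and agree on \(\tT\) --- is false, and its failure is the entire point of the binding groupoid: when \(\dQ\) is a torsor under a \(\tT\)-definable group, an isomorphism from the identity interpretation to \(r\circ p\) is precisely a trivialization of the torsor, which exists only after naming a point of \(\dQ\), i.e.\ a new parameter from \(\tT^*\) that need not lie in \(\tT_A\). No Cartesian-closedness argument will produce such a point over \(A\). The correct move, and the paper's, is weaker and suffices: set \(\dC=p(\dQ)\) and consider the ind-definable set \(\dIso_{\tT^*}(\dQ,i(\dC))\). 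Applying the stable interpretation \(p\) identifies its image with \(\dIso_{\tT_A}(\dC,\dC)\), which contains the identity and so is non-empty; since \(p\) is a model of \(\tT^*\), the set \(\dIso_{\tT^*}(\dQ,i(\dC))\) is consistently non-empty. One then adjoins a point \(a\) of this set as a new constant --- this is the additional parameter set allowed in the definition of internal cover, and it is distinct from \(A\) --- and \(a\) itself is the required definable bijection. So the conclusion is not a definable isomorphism \(\dX\iso p(\dX)\) over \(A\), but one over \(A\) together with a new \(\tT^*\)-parameter witnessing that the space of such isomorphisms is non-empty.
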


As mentioned in~\S\ref{ss:terminology}, by ``over \(\tT\)'' we mean that the 
restriction of \(p\) to \(\tT\) coincides with the expansion by constants.

\begin{proof}
  let \(\dQ\) be a definable set in \(\tT^*\), generating it over \(\tT\).  
  Assume first that \(\tT^*\) is an internal cover of \(\tT\), so there is a 
  sort \(\dX\) of \(\tT^*\), an expansion by a constant symbol \(a\in\dX\), and 
  a definable bijection \(g_a:\dQ\ra\dQ_a\), with \(\dQ_a\) definable in 
  \(\tT\). By stable-embeddedness, \(\dQ_a\) is definable by a parameter 
  \(a_0\) in \(\tT\). The assignment \(\dQ\mt\dQ_a\) extends uniquely to an 
  interpretation \(x_{a_0}\) of \(\tT^*\) in \(\tT_{a_0}\), over \(\tT\).  
  Since \(g_a\) determines a definable isomorphism between \(\dQ\) and 
  \(\dQ_a\) (and similarly for any definable set it generates), this 
  interpretation is stable.

  Conversely, assume we have a stable interpretation \(p:\tT^*\ra\tT_{A_0}\) 
  over \(i\). We still denote by \(i\) the extension of \(i\) to the 
  expansion \(\tT_{A_0}\ra\tT^*_{A_0}\) which is the identity on \(A_0\) (it 
  is still stable).  Denoting \(\dC=p(\dQ)\), the set 
  \(\dIso(p(\dQ),p(i(\dC)))=\dIso(\dC,\dC)\) is non-empty, since it contains 
  the identity on \(\dC\). Since \(p\) is stable, the left hand side admits a 
  definable bijection with \(\dIso(\dQ,i(\dC))\), so is non-empty as well.  
  Any point \(a\) of this set shows that \(\tT^*\) is an internal cover.
\end{proof}

\subsection{Definable groupoids}\label{sss:1grpd}
A definable groupoid will be denoted as \(\dG=\pp{\dG_0,\dG_1}\), with 
definable set \(\dG_0\) of objects and a definable set \(\dG_1\) of 
isomorphisms, where the domain and codomain maps are denoted 
\(\dd,\cd:\dG_1\ra\dG_0\), respectively, and composition denoted by 
\(\circ\). For objects \(a,b\in\dG_0\), we write \(\dG(a,b)\) for the 
\(a,b\)-definable set \({\pp{\dd,\cd}}^{-1}(\pp{a,b})\) of morphisms from 
\(a\) to \(b\).  A map \(f:\dG\ra\dH\) of definable groupoids is a definable 
functor: a pair of maps \(f_0:\dG_0\ra\dH_0\) and \(f_1:\dG_1\ra\dH_1\) 
commuting with the domain, codomain and composition maps. We will say that 
\(f\) is a \Def{weak equivalence}, denoted \(f:\dG\weq\dH\), if it induces an 
equivalence of categories on all models (this terminology will be generalised 
in Definition~\ref{def:whe}). Our groupoids will generally \emph{not} be assumed to 
be connected.

In \GGII{\S2}, a definable groupoid is attached to each internal cover. This  
groupoid also admits two descriptions.  The first as a definable groupoid 
\(\dG^*\) in \(\tT^*\): this construction depends on the choice of a 
definable set \(\dX\) in \(\tT^*\), as in the proof of 
Proposition~\ref{prp:1intcov}. Given this choice, the groupoid can be described as 
follows:
\begin{construction}\label{con:groupoid}
  The groupoid associated to the data above is described as follows:
  \begin{description}
    \item[Objects (\(\dG_0\))] Complete types of elements \(a\in\dX\) over 
      \(\tT\), along with an additional object \(*\).  Since \(\tT\) is 
      stably embedded, this set of types is definable in \(\tT\) (it is 
      definable, rather than pro-definable, by our finiteness assumption on 
      the language in~\S\ref{ss:terminology}).
    \item[Morphisms (\(\dG_1\))] The set of isomorphisms from \(*\) to a type 
      \(p\in\dG_0\) is given by the realisations of \(p\).  Given another 
      type \(q\in\dG_0\), a morphism from \(p\) to \(q\) is given by a 
      \(2\)-type \(s\) extending \(p\) and \(q\) (over \(\tT\)). Distinct 
      realisations of \(s\) correspond to distinct ways of writing the 
      morphism \(s\) as a composition of a morphism from \(p\) to \(*\) and a 
      morphism from \(*\) to \(q\).
    \item[Composition] Given a type \(s(x,y)\in\dG_1\) extending 
      \(p(x),q(y)\in\dG_0\), and a type \(t(y,z)\in\dG_1\) extending \(q(y)\) 
      and \(r(z)\in\dG_0\), the internality assumption implies that there is 
      a unique \(3\)-type \(u(x,y,z)\) extending all of them. The restriction 
      \(u\) to \(x,z\) is the composition of \(s,t\).

      The composition of an isomorphism \(a\) from \(*\) to \(p\in\dG_0\) 
      with (the inverse of) another such isomorphism \(b\) to \(q\in\dG_0\) 
      is the type of \(\pp{a,b}\). The other compositions are determined by 
      these conditions.
  \end{description}
\end{construction}

We denote by \(\dG\) the full sub-groupoid of \(\dG^*\) on the same objects 
excluding \(*\).  Then \(\dG\) is defined entirely in \(\tT\).

\subsubsection{}\label{sss:1igrpd}
To give a second description, consider, for each \(\tT\)-structure \(A\), the 
groupoid \(I(A)=I_{\tT^*/\tT}(A)\) whose objects are stable interpretations 
of \(\tT^*\) in \(\tT_A\), that are the identity on \(\tT_A\), and whose 
morphisms are isomorphisms of such interpretations, which are the identity 
when restricted to \(\tT\). Here again we may enlarge \(I\) to obtain 
\(I^*\), by adding an additional object \(*\), which is described explicitly 
as the identity interpretation of \(\tT^*\), and again morphisms are given by 
\(A^*\)-definable isomorphisms of interpretations over \(\tT\) (where \(A^*\) 
is now a \(\tT^*\)-structure). The following statement appeared in a slightly 
different form in~\cite{pv}:

\begin{prop}\label{prp:eqgrpd}
  With notation as in~\ref{sss:1grpd}, for each \(\tT^*\)-structure \(A\), 
  there is fully-faithful embedding \(i_A:\dG^*(A)\ra{}I^*(A)\), preserving 
  the vertex, and commuting with automorphism action on \(A\). If \(A\) is a 
  model, \(i_A\) is an equivalence of categories.
\end{prop}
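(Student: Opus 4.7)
The plan is to define \(i_A\) explicitly using the data from Proposition~\ref{prp:1intcov}: send \(*\) to the identity interpretation, and each \(A\)-definable type \(p\in\dG_0(A)\) to a stable interpretation \(x_p:\tT^*\ra\tT_A\) over \(\tT\), obtained by descending the parameter-dependent interpretation \(x_a\) (where \(a\models p\)) along the type, using stable embeddedness to convert the \(\tT^*\)-parameter \(a\) into the \(\tT\)-parameter \(a_0\). On morphisms, a \(2\)-type \(s:p\ra q\) in \(\dG^*(A)\) produces a \(\tT_A\)-iso \(x_p\iso x_q\) via descent of \(g_b\circ g_a^{-1}\) for \(\pp{a,b}\models s\), while a realisation \(a\in\dX(A)\) of \(p\) (i.e.\ a morphism \(*\ra p\) in \(\dG^*\)) determines the \(\tT^*_A\)-iso \(\Mod{id}\iso x_p\) with component \(g_a\) at \(\dQ\); morphisms \(p\ra *\) are treated symmetrically. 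Functoriality and preservation of \(*\) are built into the construction.

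Full faithfulness follows from stability: a \(\tT_A\)-definable natural transformation \(x_p\ra x_q\) is determined by its component at \(\dQ\) (since \(\dQ\) generates \(\tT^*\) over \(\tT\)), and stability of the constructed interpretations places the relevant iso sets in canonical bijection with \(2\)-types over \(\tT\) extending \(p,q\); these are exactly the morphisms in \(\dG^*(A)(p,q)\) by construction of \(\dG^*\). A parallel analysis, using the defining condition of the internal cover rather than stability, identifies \(\tT^*_A\)-isos \(\Mod{id}\iso x_p\) at \(\dQ\) with elements of \(\dX(A)\) realising \(p\), covering the morphisms involving \(*\).

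For essential surjectivity assume \(A\) is a model of \(\tT^*\), and let \(y:\tT^*\ra\tT_A\) be an object of \(I(A)\). Since \(A\models\tT^*\), the sort \(\dX\) has a realisation \(a\in\dX(A)\); let \(p\in\dG_0(A)\) be its type. Applying \(y\) to \(g_a:\dQ\iso\dQ_a\) yields a \(\tT_A\)-iso \(y(\dQ)\iso y(\dQ_a)=\dQ_a=x_p(\dQ)\); since \(\dQ\) generates \(\tT^*\) over \(\tT\) and both \(y,x_p\) restrict to the identity on \(\tT\), this extends uniquely to an iso \(y\iso x_p=i_A(p)\). The principal obstacle is precisely this last step: one must produce a realisation of \(\dX\) inside \(A\), and this is where the hypothesis ``\(A\) is a model'' is used essentially; the remaining items (compatibility with the automorphism action on \(A\) and preservation of \(*\)) are immediate from the naturality of the construction.
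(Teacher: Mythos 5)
Your construction of \(i_A\) and the full-faithfulness argument track the paper's proof closely and are essentially correct (modulo direction conventions for the \(g\)'s). The gap is in the essential surjectivity step. You take a realisation \(a\in\dX(A)\) of the sort \(\dX\) in the \(\tT^*\)-structure \(A\) and then ``apply \(y\)'' to the \(a\)-definable bijection \(g_a:\dQ\ra\dQ_a\). But \(y\) is an interpretation of \(\tT^*\) in \(\tT_{A_0}\), where \(A_0\) is the \(\tT\)-part of \(A\); it acts on definable objects of \(\tT^*\) (over \(\tT\)-parameters), and nothing in the data of \(y\) assigns a value to the \(\tT^*\)-parameter \(a\). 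So \(y(g_a)\) is simply not defined, and the point \(a\in\dX(A)\) bears no relation to the interpreted copy \(y(\dX)\): all it witnesses is an isomorphism from the identity interpretation \(*\) to \(x_p\), which is already part of the functor and tells you nothing about \(y\). You correctly flag this step as the principal obstacle, but you locate the use of the model hypothesis in realising \(\dX\) inside \(A\), which is the wrong set.

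What is actually needed --- and what the paper does --- is a point of the \emph{interpreted} set: for a suitable \(p\), the set \(Y\) of isomorphisms from \(*\) to \(p\) is a consistent definable set of \(\tT^*\), so its image \(y(Y)\) (a definable subset of \(y(\dX)\) in \(\tT_{A_0}\)) is provably non-empty because \(y\) is a model of \(\tT^*\); since \(A_0\) is a model of \(\tT\), there is a point \(b\in y(Y)(A_0)\), and the resulting bijection \(y(\dQ)\ra\dQ_p=x_p(\dQ)\) extends (as in your last sentence) to the desired isomorphism \(y\cong x_p\). Equivalently, one can use stability of \(y\) to transport the non-emptiness of \(\dIso(\dQ,\dQ_p)\) into \(\tT_{A_0}\) and then realise it in the model \(A_0\). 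Either way, the hypothesis ``\(A\) is a model'' enters through \(A_0\models\tT\) realising a consistent \(\tT_{A_0}\)-definable set attached to \(y\), not through \(A\models\tT^*\) realising \(\dX\).
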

\begin{proof}
  This is essentially~\GGII{Theorem~3.2}. The functor \(i_A\) was described 
  in the proof of Proposition~\ref{prp:1intcov}: to an object \(p\) of 
  \(\dG(A)\), viewed as a type over \(\tT\) (definable over \(A_0\)), we 
  attach the interpretation \(x_b=x_p\) described there, with \(b\) any 
  realization of \(p\) (as explained there, \(x_b\) depends only on 
  \(p\in{}A_0\) and not on \(b\)). Each such realization determines an 
  isomorphism \(g_b\) from \(x_b\) to \(*\), again as above, which describes 
  the functor on morphisms from \(p\) to \(*\). If \(q\) is another object, 
  with realization \(c\), \(i_A\) assigns 
  \({g_c}^{-1}\circ{}g_b:x_p\ra{}x_q\) to the type \(r\) of the pair 
  \((b,c)\). This depends only on \(r\), since the code for this composition 
  lies in \(\tT\), by stable embeddedness. This code also determines \(r\) 
  completely, so the functor is fully faithful.

  To prove the final statement, let \(i:\tT^*\ra\tT\) be any interpretation 
  over a model \(M_0\). The internality assumption implies that for some 
  \(p\in\dG(M_0)\), the set \(Y\) of isomorphisms between \(*\) and \(p\) is 
  non-empty. Since \(M_0\) is a model, there is a point \(b\) in 
  \(i(Y)(M_0)\). Then \(g_b\) is an isomorphism from \(x_b\) to \(i\).
\end{proof}

To summarise, to each stable embedding of \(\tT\) in \(\tT^*\), we had 
attached a groupoid \(I_{\tT^*/\tT}\) classifying stable interpretations of 
\(\tT^*\) back in \(\tT\). The embedding is an internal cover precisely if 
the groupoid is non-empty, and in this case, the groupoid \(I\) is equivalent 
to a definable one (and to the classical binding groupoid). Conversely, 
starting with a definable groupoid \(\dG\) in \(\tT\), there is an internal 
cover \(\tT^*=\tT_\dG\) and an equivalence \(\dG\ra{}I_{\tT^*/\tT}\):
\begin{construction}\label{con:cover}
  The theory \(\tT_\dG\) expands \(\tT\) by an additional sort \(\dX\), a 
  function symbol \(c:\dX\ra\dG_0\), and a function symbol 
  \(a:\dX\x_{\dG_0}\dG_1\ra\dX\), where 
  \(\dX\x_{\dG_0}\dG_1=\St{\pp{x,g}}{c(x)=d(g)}\), and \(\tT_\dG\) states 
  that \(\dX\) is non-empty, and that the resulting structure is a groupoid 
  \(\dG^*\) extending \(\dG\) by an additional object \(*\), with elements 
  \(x\in\dX\) viewed as morphisms from \(*\) to \(c(x)\), and \(a\) provides 
  the composition of such elements with morphisms of \(\dG\).

  Starting from this \(\tT_\dG\), each element of \(\dX\) exhibits 
  \(\tT_\dG\) as an internal cover of \(\tT\). The type of such an element 
  \(x\in\dX\) over \(\tT\) is given by \(c(x)\), and the realisations of this 
  type are indeed the morphisms from \(*\) to \(c(x)\), so 
  Construction~\ref{con:groupoid} indeed recovers \(\dG\).
\end{construction}

For example, when \(\dG\) is a definable group (groupoid with one object), 
the resulting \(\tT_\dG\) is the theory of \(\dG\)-torsors. We refer 
to~\GGII{\S3} for more details, but note again that our construction is 
slightly different when \(\dG\) is not connected: we always expand just by 
one additional object \(*\), thus obtaining an internal cover (possibly 
incomplete), even in the non-connected case.

Alternatively, Definition~\ref{def:tg2} is a generalisation that also applies 
to this case.

\subsubsection{Definable \(\dG\)-sets}\label{sss:gsets}
If \(\dG=\pp{\dG_0,\dG_1}\) is a definable groupoid in \(\tT\), by a (left) 
\Def{\(\dG\)-set} we mean a definable set \(\dX\), a definable map 
\(\pi:\dX\ra\dG_0\) to the set of objects \(\dG_0\) of \(\dG\), and an 
``action'' map \(a:\dG_1\x_{\dG_0}\dX\ra\dX\), over \(\dG_0\), satisfying the 
usual action axioms (here, \(\dG_1\x{\dG_0}\dX\) is the definable subset of 
\(\dG_1\x\dX\) given by \(d(g)=\pi(x)\), and ``over \(\dG_0\)'' means that 
\(c(g)=\pi(a(g,x))\) for all such pairs).  Thus, a morphism \(g:a\ra{}b\) in 
\(\dG\) determines a bijection \(a_g:\dX_a\ra\dX_b\), where 
\(\dX_t=\pi^{-1}(t)\), and we will sometimes write \(gx\) in places of 
\(a_g(x)\) (a pair \(\pp{\dG,\dX}\) as above is called a \emph{concrete 
groupoid} in~\GGII{\S3}).  We think of \(\dG\)-sets as analogs of local 
systems over \(\dG\). A morphism from a \(\dG\)-set \(\dX\) to another 
\(\dG\)-set \(\dY\) is a definable map from \(\dX\) to \(\dY\) that commutes 
with \(\pi\) and \(a\).

Let \(\dX\) be a \(\dG\)-set. If \(\dH=\pp{\dH_0,\dH_1}\) is another 
groupoid, and \(i:\dG\ra\dH\) is a definable map of groupoids, we set
\[
i_!(\dX)=\St{\pp{h,x}\in\dH_1\x\dX}{i(\pi(x))=\dd(h)}/\sim,
\]
where \(\pp{h,gx}\sim\pp{h\circ{}i(g),x}\) for \(g\in\dG_1\) satisfying 
\(\dd(g)=\pi(x)\) and \(i(\cd(g))=\dd(h)\). This is an \(\dH\)-set, with 
structure map induced by \(\pp{h,x}\mt\cd(h)\) and action induced by 
\(\pp{h',\pp{h,x}}\mt\pp{h'h,x}\). On the other hand, if \(\dY\) is an 
\(\dH\)-set, we set \(i^*(\dY)=\dG_0\x_{\dH_0}\dY\), with the projection to 
\(\dG_0\) as the structure map, and action given by \(\pp{g,y}\mt{}i(g)y\) 
for \(y\in\dY\) with \(\pi(y)=i(\dd(g))\). It is clear that both 
constructions are functorial, and as the notation suggests, \(i_!\) 
is left adjoint to \(i^*\).

With these notions, we have the following description of definable sets in 
\(\tT^*\) as local systems over \(\dG\):
\begin{prop}\label{prp:gsets}
  If \(\tT^*\) is an internal cover of \(\tT\), corresponding to the 
  definable groupoid \(\dG\) in \(\tT\), then the category of definable sets 
  in \(\tT^*\) is equivalent to the category of \(\dG\)-sets in \(\tT\).
  Definable sets from \(\tT\) correspond to themselves, with trivial action.
\end{prop}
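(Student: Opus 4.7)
By Construction~\ref{con:cover} and Proposition~\ref{prp:eqgrpd} we may assume $\tT^* = \tT_\dG$. The plan is to define an explicit functor $F$ from $\dG$-sets to $\tT_\dG$-definable sets and argue directly that it is an equivalence, using stable embeddedness for full faithfulness and the internality hypothesis for essential surjectivity.

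Given a $\dG$-set $(\dZ, \pi)$, set $F(\dZ) = (\dX \times_{\dG_0} \dZ)/{\sim}$, where $(x, z) \sim (a(x, g), g \cdot z)$ for each $g \in \dG_1$ with $\dd(g) = c(x) = \pi(z)$. This is a quotient of a $\tT_\dG$-definable set by a $\tT_\dG$-definable equivalence relation, hence definable by elimination of imaginaries, and is clearly functorial in $\dZ$. For a $\tT$-definable set $\dZ_0$ equipped with the trivial action (realised as $\dG_0 \times \dZ_0$), fiberwise freeness of the $\dG$-action on $\dX$ identifies the quotient with $\dZ_0$ itself, verifying the last sentence of the statement.

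For full faithfulness, note that for each $x \in \dX$ with $c(x) = a$ the map $\iota_x\colon \dZ_{i, a} \to F(\dZ_i)$, $z \mapsto [x, z]$, is a bijection by the torsor property of $\dX_a$ over $\dG(a, a)$. A morphism $\phi\colon F(\dZ_1) \to F(\dZ_2)$ thus yields, for each $x$, a $\tT_\dG$-definable map $\phi_x = \iota_x^{-1} \circ \phi \circ \iota_x$ between $\tT$-definable sets. Stable embeddedness forces $\phi_x$ to come from a $\tT$-definable map, and the relation $\sim$ makes this family $\dG$-equivariant, giving a morphism of $\dG$-sets $\dZ_1 \to \dZ_2$; this construction is inverse to the obvious map in the other direction.

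Essential surjectivity uses internality. Given $\dY$ in $\tT_\dG$, each $x_0 \in \dX_a$ yields a $\tT$-definable set $\dY'_{x_0}$ and an isomorphism $\alpha_{x_0}\colon \dY \to \dY'_{x_0}$ over $x_0$. Stable embeddedness implies the $\dY'_{x_0}$ are $\tT$-definable from $c(x_0) = a$, and the transitions $\alpha_{x'_0} \circ \alpha_{x_0}^{-1}$ for different $x_0, x'_0 \in \dX_a$ assemble, via the identification of $\dG$ with the groupoid of interpretations (Proposition~\ref{prp:eqgrpd}), into a $\dG$-action making the family $(\dY'_a)_{a\in \dG_0}$ into a $\dG$-set $\dZ$ with $F(\dZ) \cong \dY$. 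The main obstacle is this descent step: one must verify that the cocycle extracted from the $\alpha_{x_0}$ is $\tT$-definable and coherent, which ultimately reduces to the uniqueness up to canonical isomorphism of the interpretations $p_{x_0}$ appearing in the proof of Proposition~\ref{prp:eqgrpd}.
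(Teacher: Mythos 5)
Your argument is, once unwound, essentially the paper's: the contracted product \(F(\dZ)=(\dX\x_{\dG_0}\dZ)/{\sim}\) is exactly the composite \(j^*(i_!(\dZ))\) that the paper uses for the direction from \(\dG\)-sets to definable sets, and your essential-surjectivity step (the family \((\dY'_a)_{a\in\dG_0}\) glued along the transitions \(\alpha_{x_0'}\circ\alpha_{x_0}^{-1}\)) is precisely the paper's functor \(\dY\mt\coprod_{p\in\dG_0}p(\dY)\), with the coherence of the cocycle delegated, as in the paper, to Proposition~\ref{prp:eqgrpd}. The only organisational difference is that you check one functor is fully faithful and essentially surjective where the paper exhibits two quasi-inverse functors; your route has the merit of making the use of stable embeddedness explicit.

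There is, however, one genuine gap: the opening reduction ``by Construction~\ref{con:cover} and Proposition~\ref{prp:eqgrpd} we may assume \(\tT^*=\tT_\dG\)''. Neither cited result gives this. Construction~\ref{con:cover} only builds \(\tT_\dG\) and identifies its associated groupoid, and Proposition~\ref{prp:eqgrpd} only identifies \(\dG^*\) with the groupoid of interpretations; the assertion that an arbitrary internal cover with associated groupoid \(\dG\) may be replaced by \(\tT_\dG\) \emph{at the level of its category of definable sets} is Corollary~\ref{cor:1covistg}, which the paper deduces \emph{from} the present proposition, so invoking it here is circular. The repair is cheap: Construction~\ref{con:groupoid} already provides, inside any such \(\tT^*\), a definable sort \(\dX\) with a map to \(\dG_0\) (sending an element to its type over \(\tT\)) on whose fibres \(\dG\) acts with the torsor property (the elements of \(\dX_a\) are the morphisms of \(\dG^*\) from \(*\) to \(a\)), and your argument runs verbatim with that \(\dX\). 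Two smaller points you should flag rather than elide: in the full-faithfulness step, stable embeddedness gives that each \(\phi_x\) has a code in \(\tT\), but to obtain a single morphism of \(\dG\)-sets you also need \(\phi_x\) to depend only on \(c(x)\), which uses that \(\dG(a,a)\) is the \emph{full} binding group acting transitively on \(\dX_a\) over \(\tT\); and when \(\dG\) is not connected the map \(c\) need not be surjective in a completion, so \(F\) only sees the component of \(*\) (a caveat the paper shares, but which your claim that \(\iota_x\) is a bijection onto \(F(\dZ_i)\) quietly relies on).
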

\begin{proof}
  To each definable set \(\dX^*\) in \(\tT^*\) we assign the definable set 
  \(\dX=\coprod_{p\in\dG_0}p(\dX^*)\). It follows from the uniformity of 
  \(p\) that \(\dX\) is definable in \(\tT\). By definition, \(\dX\) admits a 
  definable map to \(\dG_0\). The action is given tautologically by the 
  identification of the morphisms in \(\dG_0\) with maps of interpretations.  
  Since each \(p\) is an interpretation, this is functorial in \(\dX^*\).

  In the other direction, let \(\dG^*\) be the canonical extension of \(\dG\) 
  in \(\tT^*\) (we identify \(\dG\) with its image in \(\tT^*\)), let 
  \(i:\dG\ra\dG^*\) be the inclusion, and let \(j\) be the inclusion of the 
  canonical object \(*\) of \(\dG^*\), along with its automorphism group 
  \(\dH\), into \(\dG^*\). A definable \(\dG\)-set \(\dX\) in \(\tT\), viewed 
  again as embedded in \(\tT^*\), corresponds then to \(\dX^*=j^*(i_!(\dX))\) 
  (and the resulting action by \(\dH\) is the natural action by 
  automorphisms.)
\end{proof}

We note that each definable set in \(\tT^*\) comes equipped with an action of 
the binding group \(\dAut(*)\), and with it, the first direction could 
likewise be described as \(\dX=i^*(j_!(\dX^*))\).

\begin{cor}\label{cor:1covistg}
  If \(\dG\) is a groupoid associated to an internal cover \(\tT^*\) of 
  \(\tT\), then \(\tT^*\) is bi-interpretable with \(\tT_\dG\) over \(\tT\).
\end{cor}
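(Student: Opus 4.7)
The plan is to construct explicit interpretations $\phi\colon \tT_\dG \ra \tT^*$ and $\psi\colon \tT^* \ra \tT_\dG$ over $\tT$, and verify they are mutually inverse. For $\phi$, let $\dQ$ be the generating sort of $\tT^*$ used in Construction~\ref{con:groupoid}. By stable embeddedness together with the finiteness assumption of \S\ref{ss:terminology}, the type map $c\colon \dQ \ra \dG_0$ is $\tT^*$-definable, and composition in the extended groupoid $\dG^*$ provides a $\tT^*$-definable action $a\colon \dQ \x_{\dG_0} \dG_1 \ra \dQ$. By Construction~\ref{con:groupoid}, these data realise the groupoid $\dG^*$ extending $\dG$ by a single vertex $*$, which is exactly the content of the axioms of $\tT_\dG$ listed in Construction~\ref{con:cover}. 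Thus the assignment $\dX \mt \dQ$, together with the identity on $\tT$, extends uniquely to an interpretation $\phi$.

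For $\psi$, note first that by Construction~\ref{con:cover} and Proposition~\ref{prp:eqgrpd}, $\tT_\dG$ is itself an internal cover of $\tT$ with associated groupoid (equivalent to) $\dG$, so Proposition~\ref{prp:gsets} applies to it as well. The proof of that proposition provides explicit definable formulas for both directions of the equivalence: forward is $\dY^* \mt \coprod_{p} p(\dY^*)$, and inverse is $\dX \mt j^*(i_!(\dX))$, using the groupoid inclusions of \S\ref{sss:gsets}. Composing the forward direction applied to $\tT^*$ with the inverse direction applied to $\tT_\dG$ defines $\psi$, which restricts to the identity on $\tT$ because $\tT$-definable sets correspond to trivial $\dG$-sets under both equivalences. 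Both $\phi$ and $\psi$ then correspond, via Proposition~\ref{prp:gsets}, to the identity functor on $\dG$-sets in $\tT$ (since they match up generating sorts and preserve the $\dG$-action), so $\psi \circ \phi$ and $\phi \circ \psi$ are isomorphic to the identity interpretations.

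The main obstacle is upgrading the abstract categorical equivalence from Proposition~\ref{prp:gsets}, applied to $\tT_\dG$, into a syntactic interpretation rather than just an equivalence of categories. This amounts to checking that the construction $\dX \mt j^*(i_!(\dX))$ is uniformly definable in $\tT_\dG$, which holds because $\dG$, its extension $\dG^*$, and the action $a$ are all part of the signature of $\tT_\dG$, while the operations $i_!$ and $j^*$ on $\dG$-sets are built from finite limits and colimits of definable sets — all of which are preserved by the interpretation structure.
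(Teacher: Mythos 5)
Your proposal is correct and follows essentially the same route as the paper: the interpretation \(\tT_\dG\ra\tT^*\) is given by the definable extended groupoid \(\dG^*\) living in \(\tT^*\) (Construction~\ref{con:groupoid}), and bi-interpretability follows because Proposition~\ref{prp:gsets} identifies the definable sets of both \(\tT^*\) and \(\tT_\dG\) with \(\dG\)-sets in \(\tT\), compatibly with that interpretation. The only caveat is a naming slip: the new sort of \(\tT_\dG\) should be matched with the internality-witnessing sort \(\dX\) of Construction~\ref{con:groupoid} (whose elements are the morphisms out of \(*\)), not with the generating sort \(\dQ\) of Proposition~\ref{prp:1intcov}; this does not affect the substance of the argument.
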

\begin{proof}
  The definable groupoid \(\dG^*\) in \(\tT^*\) forms an interpretation of 
  \(\tT_\dG\) over \(\tT\). It is a bi-interpretation since both categories 
  of definable sets are equivalent to the category of \(\dG\)-sets in \(\tT\) 
  (commuting with the above interpretation).
\end{proof}

\subsubsection{Pushouts}
Let \(g:\dK\ra\dG\) and \(h:\dK\ra\dH\) be maps of definable groupoids, and 
assume that \(g\) is fully faithful. We construct another definable groupoid 
\(\dG\cop_\dK\dH\) that can be viewed as the pushout of \(\dG\) and \(\dH\) 
over \(\dK\), as follows: For objects, we let 
\({(\dG\cop_\dK\dH)}_0=\dG_0\coprod\dH_0\). If \(a,b\) are two such objects, 
we define the morphisms as follows:
\begin{enumerate}
  \item If \(a,b\in\dH_0\), then \((\dG\cop_\dK\dH)(a,b)=\dH(a,b)\)
  \item If \(a\in\dG_0\) and \(b\in\dH_0\), morphisms from \(a\) to \(b\) 
    are equivalence classes \(v\cop{}u\) of pairs \(\pp{v,u}\), where 
    \(u\in\dG(a,g(c))\), \(v\in\dH(h(c),b)\) for some \(c\in\dK_0\), and 
    \(\pp{v,g(w)\circ{}u}\) is equivalent to \(\pp{v\circ{}h(w),u}\) for 
    all \(w\in\dK_1\) for which the composition is defined. Morphisms 
    from \(b\) to \(a\) are defined analogously.
  \item If \(a,b\in\dG_0\) are both in the essential image of \(g\), a 
    morphism from \(a\) to \(b\) is similarly defined as an equivalence 
    class \(u'\cop{}v\cop{}u\), with \(u,u'\in\dG_1\) and \(v\in\dH_1\).
  \item If either of \(a,b\in\dG_0\) is not in the essential image of 
    \(g\), then morphisms are the same as in \(\dG\).
\end{enumerate}
The composition \((u'\cop{}v\cop{}u)\circ(u_1'\cop{}v_1\cop{}u_1)\) is 
defined as follows: There are \(a,b\in\dK_0\) such that \(u\circ{}u_1'\) is 
a morphism from \(g(a)\) to \(g(b)\). Since \(g\) is fully faithful, it has 
the form \(g(w)\) for a unique morphism \(w\) from \(a\) to \(b\) in 
\(\dK\). We define the composition to be 
\(u'\cop(v\circ{}h(w)\circ{}v_1)\cop{}u_1\). It is clear that this is 
independent of the choices of representatives. The composition in the other 
cases is defined similarly.

There is an obvious map \(h':\dH\ra\dG\cop_\dK\dH\), and we define 
\(g':\dG\ra\dG\cop_\dK\dH\) by sending each object to itself, each morphism 
between objects not in the essential image of \(g\) to itself as well, and 
for \(a,b\in\dG_0\) in the essential image of \(g\), and \(u\) a morphism 
from \(a\) to \(b\), we set \(g'(u)=(u\circ{}u'^{-1})\cop\1_{h(c)}\cop{}u'\), 
where \(u':a\ra{}g(c)\) is any morphism and \(c\in\dK_0\). We have an 
isomorphism \(\alpha\) from \(h'\circ{}h\) to \(g'\circ{}g\), given on an 
object \(c\in\dK_0\) by \(\1_{g(c)}\cop\1_{h(c)}\). It is routine to check 
that everything is well defined, and also that the following statement holds.

\begin{prop}\label{prp:1pushouts}
  Let \(g:\dK\ra\dG\), \(h:\dK\ra\dH\) and the rest of the notation be as 
  above.
  \begin{enumerate}
    \item Given definable maps of groupoids \(g_1:\dG\ra\dF\) and 
      \(h_1:\dH\ra\dF\), and an isomorphism 
      \(\beta:h_1\circ{}h\ra{}g_1\circ{}g\), there is a unique map of 
      groupoids \(f:\dG\cop_\dK\dH\ra\dF\) that coincides with \(g_1\) and 
      \(h_1\) on the objects, \(f\circ{}g'=g_1\), \(f\circ{}h'=h_1\) and such 
      that \(f\cdot\alpha=\beta\).
    \item \(h':\dH\ra\dG\cop_\dK\dH\) is fully faithful. If \(g\) is a weak 
      equivalence, then so is \(h'\).
  \end{enumerate}
\end{prop}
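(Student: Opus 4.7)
The plan is to handle the two parts separately. Part (1) is forced by the construction and essentially amounts to unwinding the universal property case by case, while part (2) combines a tautological reading of case~(i) with a short diagram chase using $\alpha$.

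For (1), the values of $f$ on objects are prescribed, since $(\dG\cop_\dK\dH)_0=\dG_0\coprod\dH_0$ and the conditions $f\circ g'=g_1$, $f\circ h'=h_1$ force $f$ to be $g_1$ on $\dG_0$ and $h_1$ on $\dH_0$. On morphisms, I would extend $f$ case by case following the four cases of the construction: on case~(i) set $f=h_1$ and on case~(iv) set $f=g_1$; on case~(ii), for a representative $\pp{v,u}$ with $u:a\ra g(c)$ and $v:h(c)\ra b$, put
\[
f(v\cop u):=h_1(v)\circ\beta_c^{-1}\circ g_1(u),
\]
and dually in the reverse direction with $\beta_c$ replacing $\beta_c^{-1}$; on case~(iii), use the zig-zag $g_1(u')\circ\beta_{c'}\circ h_1(v)\circ\beta_c^{-1}\circ g_1(u)$. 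Well-definedness on equivalence classes reduces in each case to the identity $g_1(g(w))\circ\beta_{c_1}=\beta_{c_2}\circ h_1(h(w))$, which is exactly naturality of $\beta$ at $w:c_1\ra c_2$. Compatibility with composition is then checked case by case, again using naturality of $\beta$ together with the full-faithfulness of $g$ that was already used to make the composition rule in $\dG\cop_\dK\dH$ well-defined. Uniqueness is forced because every morphism in $\dG\cop_\dK\dH$ factors as a composition of images of morphisms of $\dG$, of $\dH$, and of components of $\alpha$, and the value of $f$ on each of these is prescribed by the hypotheses.

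For (2), full faithfulness of $h'$ is immediate from case~(i) of the construction: for any $a,b\in\dH_0$ the morphism set $(\dG\cop_\dK\dH)(h'(a),h'(b))$ is defined to be $\dH(a,b)$. Assuming further that $g$ is a weak equivalence, I would verify essential surjectivity of $h'$ on each model $M$. If $a\in\dH_0(M)$ then $a=h'(a)$, so nothing to do. If $a\in\dG_0(M)$, essential surjectivity of $g$ on $M$ yields $c\in\dK_0(M)$ together with an isomorphism $u:a\iso g(c)$ in $\dG(M)$; its image $g'(u)$ is an isomorphism $a\iso g'(g(c))$ in $(\dG\cop_\dK\dH)(M)$, and composing with $\alpha_c^{-1}:g'(g(c))\iso h'(h(c))$ exhibits $a$ as isomorphic to an object in the image of $h'$.

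The main obstacle is the combinatorial verification in part~(1): there are several cases of morphisms in $\dG\cop_\dK\dH$, each with its own equivalence relation and composition rule, and one must check that the piecewise formulas for $f$ respect every one of them. Each check is conceptually a single application of naturality of $\beta$ (together, in the composition argument, with the uniqueness of the auxiliary lift along the fully faithful $g$), but the sheer number of cases makes the bookkeeping longer than the mathematics it contains.
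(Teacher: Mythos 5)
Your proof is correct and is exactly the ``routine check'' that the paper omits: the piecewise formula $f(v\cop u)=h_1(v)\circ\beta_c^{-1}\circ g_1(u)$ (and its variants) is the forced definition, well-definedness and compatibility with composition reduce to naturality of $\beta$ together with full faithfulness of $g$, and part (2) follows from case~(i) of the construction plus the $\alpha_c^{-1}\circ g'(u)$ composite. No discrepancy with the paper's (unwritten) argument.
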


\begin{remark}
  We could make a similar construction where the set of objects is 
  \(\dG_0\coprod_{\dK_0}\dH_0\) in place of the disjoint union, and with 
  \(\alpha\) the identity. The last proposition provides a map from 
  \(\dG\cop_\dK\dH\) to this variant, which is easily seen to be a weak 
  equivalence. We will use the two constructions interchangeably.
\end{remark}

\begin{remark}
  Without the assumption that one of the maps is fully-faithful, the pushout 
  need not be definable. For example, when all groupoids are groups, this is 
  the usual free product with amalgamation.
\end{remark}

\subsection{Maps of groupoids and of interpretations}\label{sss:1mor}
With stable interpretations over \(\tT\), the assignment 
\(\tT^*\mt{}I_{\tT^*/\tT}\) are contravariantly functorial in \(\tT^*\), and 
fully-faithful: a stable interpretation \(i:\tT_1\ra\tT_2\) over \(\tT\) 
induces a functor \(i^*:I_{\tT_2/\tT}\ra{}I_{\tT_1/\tT}\) by composition.

In the other direction, if \(f:\dG\ra\dH\) is a map of definable groupoids, 
corresponding to internal covers \(\tT_\dG\) and \(\tT_\dH\), \(f\) 
determines a stable interpretation \(i^f:\tT_\dH\ra\tT_\dG\) over \(\tT\), 
that can be described in at least two ways:
\begin{enumerate}
  \item An interpretation of \(\tT_\dH\) over \(\tT\) is determined by its 
    value on the extended groupoid \(\dH^*\) defined in \(\tT_\dH\). We set 
    \(i^f(\dH^*)=\dG^*\cop_\dG\dH\) (with respect to the given map \(f\)).  
    This makes sense since the inclusion of \(\dG\) in \(\dG^*\) is a weak 
    equivalence, and is an interpretation since the embedding of \(\dH\) in 
    \(\dG^*\cop_\dG\dH\) is a weak equivalence that misses precisely one 
    object \(*\), and this completely determines its theory. To see that it 
    is stable, we may first choose a parameter in \(\dG^*\). But then \(i^f\) 
    is identified with one of the standard interpretations into \(\tT\).
  \item Alternatively, we may use Proposition~\ref{prp:gsets} to identify 
    definable sets in \(\tT_\dG\) and in \(\tT_\dH\) with \(\dG\)- and 
    \(\dH\)-sets in \(\tT\). Then \(i^f\) is identified with \(f^*\) (in this 
    approach, it is less direct to see that one gets a stable 
    interpretation).
\end{enumerate}

It is easy to verify that \({(i^f)}^*=f\) (after identifying \(\dG\) with its 
image in \(I_{\tT_G/\tT}\) via~\ref{prp:eqgrpd}, and similarly for \(\dH\)).  
However, not every stable interpretation \(i:\tT_\dH\ra\tT_\dG\) (over 
\(\tT\)) is of the form \(i^f\) for some \(f:\dG\ra\dH\). The other source of 
interpretations comes from the other operation described 
in~\S\ref{sss:gsets}: when \(f\) is a weak equivalence, the composition of 
\(f\) with the inclusion of \(\dH\) in \(\dH^*\) is a weak equivalence, so 
restricting to the image of \(f\) (on the objects), we obtain an 
interpretation \(i_f\) of \(\dG^*\) (hence of \(\tT_\dG\)).

\begin{prop}\label{prp:spans}
  Let \(\dG\) and \(\dH\) be two definable groupoids, with associated covers 
  \(\tT_\dG\) and \(\tT_\dH\). Then every stable interpretation 
  \(i:\tT_\dG\ra\tT_\dH\) over \(\tT\) is obtained as a composition 
  \(i=i^f\circ{}i_g\), for some definable groupoid \(\dK\), definable map 
  \(f:\dH\ra\dK\) and weak equivalence \(g:\dG\weq\dK\).

  In particular, if \(i:\tT_1\ra\tT_2\) is a stable interpretation of 
  internal covers over \(\tT\), we may choose definable groupoids \(\dG_1\) 
  and \(\dG_2\) corresponding to the \(\tT_i\), so that \(i\) is induces by a 
  map \(f:\dG_2\ra\dG_1\) of groupoids (up to bi-interpretation).
\end{prop}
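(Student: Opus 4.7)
The plan is to construct $\dK$ by applying Construction~\ref{con:groupoid} to the cover $\tT_\dH$ with a generating sort enlarged to absorb the interpretation $i$. Write $\dX_\dG$ and $\dX_\dH$ for the canonical vertex sorts of Construction~\ref{con:cover} in the two covers, and set $\Dset{Z} = \dX_\dH \sqcup i(\dX_\dG)$, a definable set in $\tT_\dH$ that still generates $\tT_\dH$ over $\tT$. Applying Construction~\ref{con:groupoid} to the pair $(\tT_\dH, \Dset{Z})$ yields, after deleting the vertex $*$, a groupoid $\dK$ definable in $\tT$ by stable embeddedness. Its objects are the complete $\tT$-types of elements of $\Dset{Z}$, which split canonically as $\dH_0 \sqcup \dG_0$: the $\dX_\dH$-types give $\dH_0$ by the standard analysis of the cover $\tT_\dH$, while the $i(\dX_\dG)$-types match those of $\dX_\dG$ in $\tT_\dG$ by the stability of $i$, hence give $\dG_0$.

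The morphisms of $\dK$ between objects in the same summand recover $\dH_1$ and $\dG_1$ respectively, giving fully-faithful inclusions $f \colon \dH \hookrightarrow \dK$ and $g \colon \dG \hookrightarrow \dK$ as full subgroupoids. To see that $g$ is a weak equivalence I would check essential surjectivity in a model $M$: for $b \in \dH_0$, pair any realisation $x \in \dX_\dH^M$ of $b$ with any $y \in i(\dX_\dG)^M$ (which exists because the vertex sort of $\tT_\dG$ is non-empty, and $i$ preserves this); the $\tT$-type of $(x,y)$ is then a morphism in $\dK$ from $b$ to the $\dG_0$-type of $y$. Full faithfulness on $\dG$ is immediate from the construction.

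It remains to identify $i$ with $i^f \circ i_g$ over $\tT$. I would use Proposition~\ref{prp:gsets} to convert all three interpretations into left-exact functors between the categories of $\dG$-, $\dK$- and $\dH$-sets in $\tT$, under which $i_g$ corresponds to the equivalence induced by $g$ and $i^f$ to the pullback along $f$. Since such functors agree on $\tT$-definable sets and are determined by their restriction to a single generating $\dG$-set, it suffices to check agreement on the $\dG$-set associated to $\dX_\dG$; the construction of $\dK$ is arranged precisely so that the composite functor recovers $i(\dX_\dG)$ as an $\dH$-set. The second assertion of the proposition then follows from the first together with Corollary~\ref{cor:1covistg}, applied to realise $\tT_1$ and $\tT_2$ as $\tT_{\dG_1}$ and $\tT_{\dG_2}$ up to bi-interpretation. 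The main obstacle is this final identification: it requires reconciling the dual descriptions of $i^f$ and $i_g$ in \S\ref{sss:1mor} with the groupoid-set picture of Proposition~\ref{prp:gsets}, and then confirming, by a careful comparison of fibres, that the matching on the generating sort propagates to all definable sets of $\tT_\dG$.
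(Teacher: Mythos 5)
There is a genuine gap, and it is in the very first step: the groupoid \(\dK\) is built on the wrong side of the interpretation. Construction~\ref{con:groupoid}, applied to the cover \(\tT_\dH\) with \emph{any} choice of generating sort, produces (after deleting \(*\)) a groupoid that is weakly equivalent to \(\dH\) over a model --- this is Proposition~\ref{prp:eqgrpd} together with Corollary~\ref{cor:1covistg}, and enlarging the generating sort to \(\Dset{Z}=\dX_\dH\sqcup i(\dX_\dG)\) does not change this. Since the statement requires \(g:\dG\weq\dK\) to be a weak equivalence, your \(\dK\) can only work when \(\dG\) and \(\dH\) are already equivalent. The specific claims that fail are that the complete \(\tT\)-types of elements of \(i(\dX_\dG)\) in \(\tT_\dH\) ``give \(\dG_0\)'' and that the morphisms between them recover \(\dG_1\): over a model \(N\) of \(\tT_\dH\) these types are the orbits of \(\Aut(N/\tT)\) on \(i(\dX_\dG)\), and that automorphism group is governed by \(\dH\), not by \(\dG\). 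Concretely, let \(\dG\) be the groupoid with one object and automorphism group \(\ZZ/2\ZZ\), let \(\dH\) be the trivial groupoid (so \(\tT_\dH=\tT\)), and let \(i\) be the stable interpretation over \(\tT\) sending the torsor sort \(\dX_\dG\) to a two-element \(\emptyset\)-definable set with the swap as the action. Every element of \(i(\dX_\dG)\) is then its own type over \(\tT\), every \(2\)-type is uniquely realised, and the full subgroupoid of your \(\dK\) on the \(i(\dX_\dG)\)-summand is the indiscrete groupoid on two objects: the automorphism group \(\ZZ/2\ZZ\) has been lost, the object part of \(g\) is not even well defined, and no weak equivalence \(\dG\weq\dK\) exists. (Your essential-surjectivity check only shows that \(\dH\)-objects are connected to \(\dG\)-objects, which is the easy half; full faithfulness of \(g\), which you dismiss as immediate, is exactly what breaks.) The same phenomenon occurs whenever \(i\) is not ``surjective'' --- in the group case, whenever the corresponding homomorphism \(\dH(b,b)\ra\dG(a,a)\) is not onto --- which is the generic situation.

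The asymmetry is forced by the statement itself: the weak-equivalence leg of the co-span is attached to \(\dG\), so \(\dK\) must be equivalent to \(\dG\), and the datum of \(i\) can only enter through the other leg \(f\). Accordingly, the paper builds \(\dK\) out of interpretations of \(\tT_\dG\) rather than types in \(\tT_\dH\): the objects of \(\dH\) are realised as interpretations of \(\tT_\dH\) via Proposition~\ref{prp:eqgrpd} and then pushed into \(I_{\tT_\dG/\tT}\) by composition with \(i\) (the functor \(i^*\) of~\S\ref{sss:1mor}); since \(\dG\) embeds in \(I_{\tT_\dG/\tT}\) as a weak equivalence over a model, one takes \(\dK\) to be any definable subgroupoid of \(I_{\tT_\dG/\tT}\) containing both images for which the inclusion of \(\dG\) remains a weak equivalence, with \(f\) the restriction of \(i^*\) to \(\dH\). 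Your closing step (identifying \(i\) with \(i^f\circ i_g\) via Proposition~\ref{prp:gsets}, and deducing the second assertion from Corollary~\ref{cor:1covistg}) is a reasonable plan for the remaining verification, but it cannot be carried out until \(\dK\) is corrected.
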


A configuration of the form \(\pp{\dK,f,g}\) as above is called a 
\Def{co-span} from \(\dH\) to \(\dG\).

\begin{proof}
  \(\dH\) embeds in \(I_{\tT_\dH/\tT}\) via Proposition~\ref{prp:eqgrpd}, which maps 
  via \(i^*\) to \(I_{\tT_\dG/\tT}\). We set \(f:\dH\ra\dK\) to be the 
  restriction of \(i^*\) to \(\dH\), where \(\dK\) denotes any definable 
  weakly equivalent subgroupoid of \(I_{\tT_\dG/\tT}\), which also contains 
  \(\dG\).  Then \(g\) is the inclusion of \(\dG\) in \(\dK\).

  The last part follows (using Corollary~\ref{cor:1covistg}) by choosing \(\dG_1\) 
  and \(\dG_2\) arbitrarily, and then replacing \(\dG_1\) by \(\dK\) as 
  above.
\end{proof}

As in the construction of the pushout, we may choose \(\dK\) so that its 
objects are the disjoint union of the objects of \(\dG\) and \(\dH\), and we 
will always assume that this is the case.  In the case when \(i\) is a 
bi-interpretation, we recover the notion of equivalence from~\GGII{\S3}.

\subsubsection{Composition and isomorphisms}\label{sss:composition}
Assume that for groupoids \(\dF\), \(\dG\) and \(\dH\) in \(\tT\), we are 
given interpretations \(i:\tT_\dF\ra\tT_\dG\) and \(j:\tT_\dG\ra\tT_\dH\), 
represented by co-spans \(g_1:\dF\weq\dK_1\), \(f_1:\dG\ra\dK_1\), 
\(g_2:\dG\weq\dK_2\) and \(f_2:\dH\ra\dK_2\) as in~\ref{prp:spans}. Since 
\(g_2\) is a weak equivalence, we may form the pushout 
\(\dK=\dK_1\cop_\dG\dK_2\). By Proposition~\ref{prp:1pushouts}, the map from 
\(\dK_1\) to \(\dK\) is a weak equivalence, and therefore so is the composed 
map \(g\). Hence, \(g\) along with the composed map \(f:\dH\ra\dK\) form a 
co-span that represents a stable interpretation of \(\tT_\dF\) in 
\(\tT_\dH\).  To conform with the decision about the objects of the 
representing groupoid \(\dK\), we remove the intermediate two copies of 
\(\dG_0\), and denote the resulting groupoid by 
\(\dK_2\circ\dK_1=\dK_2\circ_{\dG}\dK_1\) (though it does depend on the 
additional data).  The following is a direct calculation.

\begin{prop}\label{prp:1comp}
  In the above situation, the maps \(g:\dF\weq\dK_2\circ\dK_1\) and 
  \(f:\dH\ra\dK_2\circ\dK_1\) represent the composed interpretation 
  \(j\circ{}i\).
\end{prop}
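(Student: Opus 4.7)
My plan is to reduce the equality \(j\circ i\cong i^f\circ i_g\) to a single base-change compatibility for the pushout square defining \(\dK=\dK_2\circ\dK_1\). Write \(\iota_k:\dK_k\ra\dK\) for the canonical maps into the pushout. By~\S\ref{sss:1mor}, the two input co-spans give \(i=i^{f_1}\circ i_{g_1}\) and \(j=i^{f_2}\circ i_{g_2}\), so
\[
j\circ i=i^{f_2}\circ i_{g_2}\circ i^{f_1}\circ i_{g_1}.
\]
With \(f=\iota_2\circ f_2\) and \(g=\iota_1\circ g_1\), the evident functorialities \(i^{\iota_2\circ f_2}=i^{f_2}\circ i^{\iota_2}\) and \(i_{\iota_1\circ g_1}=i_{\iota_1}\circ i_{g_1}\) yield
\[
i^f\circ i_g=i^{f_2}\circ i^{\iota_2}\circ i_{\iota_1}\circ i_{g_1}.
\]
Comparing these, the proposition reduces to the middle identity
\[
i_{g_2}\circ i^{f_1}\cong i^{\iota_2}\circ i_{\iota_1}
\]
of interpretations from \(\tT_{\dK_1}\) to \(\tT_{\dK_2}\).

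The main step is to establish this middle identity. Using Proposition~\ref{prp:gsets} to identify definable sets in \(\tT_{\dK_i}\) with \(\dK_i\)-sets, the two sides become the functors \((g_2)_!\circ(f_1)^*\) and \((\iota_2)^*\circ(\iota_1)_!\) from \(\dK_1\)-sets to \(\dK_2\)-sets. The desired isomorphism is the base-change for the pushout square formed by \(f_1:\dG\ra\dK_1\), \(g_2:\dG\ra\dK_2\), \(\iota_1:\dK_1\ra\dK\) and \(\iota_2:\dK_2\ra\dK\), which holds because \(g_2\) (and hence \(\iota_1\), by Proposition~\ref{prp:1pushouts}) is a weak equivalence. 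Concretely, both sides can be described as quotients of pairs \(\pp{h,x}\) with \(h\) a morphism in the appropriate groupoid, and the explicit presentation of morphisms in \(\dK\) as chains \(u'\cop v\cop u\) (from the construction preceding Proposition~\ref{prp:1pushouts}) matches the defining equivalence relation of \((g_2)_!(f_1)^*\dX\) on the nose.

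The main obstacle is turning this combinatorial matching into a genuinely definable isomorphism that is compatible with the \(\dK_2\)-action and canonical in parameters. Once the definable base-change map is exhibited and seen to be natural in \(\dX\) and in maps of groupoids, the universal property Proposition~\ref{prp:1pushouts}(1) ensures uniqueness over \(\tT\), promoting the set-level identification to the claimed equality of interpretations \(j\circ i\cong i^f\circ i_g\).
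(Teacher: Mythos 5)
Your argument is correct, and it usefully supplies content that the paper leaves implicit: the paper gives no written proof of Proposition~\ref{prp:1comp}, merely declaring it ``a direct calculation''. Your reduction is a clean way to organise that calculation. The two functoriality identities \(i^{\iota_2\circ f_2}=i^{f_2}\circ i^{\iota_2}\) and \(i_{\iota_1\circ g_1}=i_{\iota_1}\circ i_{g_1}\) are immediate once one identifies, via Proposition~\ref{prp:gsets}, \(i^f\) with \(f^*\) and \(i_f\) with \(f_!\); note that the paper states only the first identification explicitly in~\S\ref{sss:1mor}, so you should justify the second in a line (e.g.\ \(i_f\) is quasi-inverse to \(i^f\) for \(f\) a weak equivalence, and \(f_!\) is quasi-inverse to \(f^*\)). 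The remaining base-change isomorphism \((g_2)_!\circ(f_1)^*\cong(\iota_2)^*\circ(\iota_1)_!\) is then exactly the ``direct calculation'': the fibre of \((\iota_2)^*(\iota_1)_!\dX\) over \(b\in{(\dK_2)}_0\) is \(\coprod_{a,c}\dK_2(g_2(c),b)\x\dK_1(a,f_1(c))\x\dX_a/\!\sim\), and collapsing the \(\dK_1\)-factor against \(\dX_a\) gives \(\coprod_{c}\dK_2(g_2(c),b)\x\dX_{f_1(c)}/\!\sim\), which is the fibre of \((g_2)_!(f_1)^*\dX\); the relevant morphisms of \(\dK\) are the two-term chains \(v\cop u\) of item (2) of the pushout construction rather than the three-term chains \(u'\cop v\cop u\) you cite, but that is cosmetic. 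Since every map in sight is given by formulas uniform in \(\dX\), definability and naturality are automatic, and you do not actually need Proposition~\ref{prp:1pushouts}(1): the statement only requires the co-span to represent \(j\circ i\) up to isomorphism of interpretations. One small omission to repair: the proposition concerns \(\dK_2\circ\dK_1\), the full subgroupoid of \(\dK\) obtained by deleting the two intermediate copies of \(\dG_0\), so you should add the easy remark that this inclusion is a weak equivalence through which \(f\) and \(g\) factor, hence represents the same interpretation.
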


Finally, we consider isomorphisms of interpretations between (stable) 
interpretations of internal covers over \(\tT\).

\begin{prop}\label{prp:bimor}
  Let \(i,j:\tT_{\dG_1}\ra\tT_{\dG_2}\) be two stable interpretations of 
  internal covers over \(\tT\). Assume \(i\) is represented by a co-span 
  \(i_1:\dG_1\weq\dH_1\) and \(i_2:\dG_2\ra\dH_1\), and \(j\) by  
  \(j_1:\dG_1\weq\dH_2\), \(j_2:\dG_2\ra\dH_2\), where each set of objects of 
  \(\dH_n\) the disjoint union of the objects of \(\dG_1\) and \(\dG_2\) 
  (realized by the object parts of \(i_k\) and \(j_k\)).
  
  Then there is a natural bijection between isomorphisms \(\alpha:i\ra{}j\) 
  (over \(\tT\)) and isomorphisms \(\ti{\alpha}:\dH_1\ra\dH_2\) which are the 
  identity on the images of \(\dG_1,\dG_2\).
\end{prop}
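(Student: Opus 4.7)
The plan is to identify both sides with data living in the interpretation groupoid \(I=I_{\tT_{\dG_1}/\tT}\). By the construction of co-spans in the proof of Proposition~\ref{prp:spans}, each of \(\dH_1,\dH_2\) arises as a definable, weakly equivalent subgroupoid of \(I\): the inclusions \(i_1,j_1\) of \(\dG_1\) come from the canonical embedding of Proposition~\ref{prp:eqgrpd}, while \(i_2,j_2\) are the restrictions to \(\dG_2\subseteq I_{\tT_{\dG_2}/\tT}\) of the functors \(i^*,j^*\) (composition with \(i,j\)). By hypothesis the object sets of \(\dH_1\) and \(\dH_2\) are both \(\dG_{1,0}\sqcup\dG_{2,0}\), so the two groupoids agree on objects and on the full subgroupoids on \(\dG_1\) and on \(\dG_2\); any discrepancy lies in the hom sets between a \(\dG_1\)-object and a \(\dG_2\)-object.

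From \(\alpha\) to \(\tilde\alpha\): I would set \(\tilde\alpha\) to be the identity on objects and on the full subgroupoids above. A morphism \(g:a\ra b\) in \(\dH_1\) with \(a\in\dG_1\) and \(b\in\dG_2\) is, via the embedding into \(I\), a map from the interpretation associated to \(a\) to \(i^*(b)=b\circ i\); composing with the whiskered natural isomorphism \(b*\alpha:b\circ i\ra b\circ j=j^*(b)\) then yields a morphism in \(\dH_2\). The condition that \(\tilde\alpha\) restricts to the identity on \(\dG_1\) and on \(\dG_2\) is immediate, and functoriality and bijectivity of \(\tilde\alpha\) follow formally from those of the natural isomorphism \(\alpha\).

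From \(\tilde\alpha\) to \(\alpha\): both \(i\) and \(j\) are interpretations over \(\tT\), so each is determined by its value on the generating groupoid \(\dG_1^*\). By item~(1) of~\S\ref{sss:1mor}, this value is computed (up to the usual restriction of objects) as the pushout \(\dG_2^*\cop_{\dG_2}\dH_n\) along \(i_2\) (respectively \(j_2\)). An isomorphism \(\tilde\alpha\) compatible with the embeddings of \(\dG_1\) and \(\dG_2\) supplies precisely the datum required by the universal property of Proposition~\ref{prp:1pushouts} to produce an isomorphism of these pushouts; this induces an isomorphism \(i(\dG_1^*)\ra j(\dG_1^*)\) of generating sorts, which extends uniquely to an isomorphism of interpretations \(\alpha:i\ra j\) that is the identity over \(\tT\).

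The main obstacle is bookkeeping: one must check that the whiskering construction and the pushout construction are mutually inverse, and that the morphism-level action of \(\tilde\alpha\) built from \(\alpha\) is well-defined on the equivalence classes of pairs appearing in the pushout. Both checks reduce to the fact that \(\alpha\) and \(\tilde\alpha\) are each determined by their restrictions to the common generating data (the hom sets between \(\dG_1\)- and \(\dG_2\)-objects of \(\dH_1\), equivalently the whiskering datum in \(I\)), and the two constructions are set up precisely so that these restrictions match; naturality of the resulting bijection in \(\alpha\) (equivalently in \(\tilde\alpha\)) is then automatic.
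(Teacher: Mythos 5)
Your argument is correct in substance but takes a genuinely different route from the paper's. Both proofs rest on the same observation --- all of the data of \(\alpha\) (resp.\ \(\ti{\alpha}\)) is concentrated in the arrows of \(\dH_l\) going from \(\dG_1\)-objects to \(\dG_2\)-objects --- but the paper exploits it differently: it names that set of arrows \(\dP_l\), observes via Proposition~\ref{prp:gsets} that \(\dP_1\) is a \(\dG_1\)-set, hence a definable set of \(\tT_{\dG_1}\), whose images under \(i\) and \(j\) are \(\dP_1\) and \(\dP_2\) with their \(\dG_2\)-set structures, and then reads off \(\ti{\alpha}\) as the single component \(\alpha_{\dP_1}:\dP_1\ra\dP_2\); the converse is immediate because \(\dP_1\) generates the cover. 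You instead realise the \(\dH_l\) inside the interpretation groupoid \(I_{\tT_{\dG_1}/\tT}\), produce \(\ti{\alpha}\) by whiskering \(\alpha\) with the interpretations \(b\in\dG_2\), and recover \(\alpha\) from \(\ti{\alpha}\) through the functoriality of the pushout \(\dG_2^*\cop_{\dG_2}\dH_l\) of Proposition~\ref{prp:1pushouts}. This is legitimate, and it makes the converse direction more explicit than the paper's one-line assertion; the price is more \(2\)-categorical bookkeeping --- in particular, the functoriality of your \(\ti{\alpha}\) on composites of cross arrows with morphisms of \(\dG_2\) is exactly the interchange law for whiskering, and that deserves to be said.

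One correction: it is not true that \(\dH_1\) and \(\dH_2\) ``agree on the full subgroupoids on \(\dG_2\)'', nor may you declare \(\ti{\alpha}\) to be the identity there. Since \(i_2\) need not be fully faithful, the full subgroupoid of \(\dH_1\) spanned by the \(\dG_2\)-objects can be strictly larger than the image of \(i_2\), and in your model its hom-sets (isomorphisms \(b\circ i\ra b'\circ i\)) are not literally hom-sets of \(\dH_2\). On such a morphism \(\ti{\alpha}\) must act by conjugation with the whiskered components of \(\alpha\); this is forced by functoriality, because every morphism of \(\dH_1\) between \(\dG_2\)-objects factors as \(g'\circ u\circ g^{-1}\) with \(g,g'\) cross arrows and \(u\) in \(\dG_1\) (using that \(i_1\) is a weak equivalence), and it does restrict to the identity on the image of \(i_2\), which is all the proposition requires. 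Only on the \(\dG_1\)-side is ``identity on the full subgroupoid'' both correct and forced, since \(i_1\) and \(j_1\) are fully faithful.
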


As an example, if \(\dG_1\) and \(\dG_2\) are groups, then each \(\dH_i\) 
corresponds to a \(\dG_1-\dG_2\) bi-torsor, and an isomorphism of the 
corresponding interpretations corresponds to an isomorphism of such 
bi-torsors.

\begin{proof}
  Let \(\dP_l\) be the set of arrows in \(\dH_l\) with domain in \(\dG_1\) 
  and codomain in \(\dG_2\). This is a \(\dG_1\)-set, with structure given by 
  the domain map and composition. The interpretation \(i\) takes \(\dP_1\) to 
  the \(\dG_2\)-set given by viewing the arrows in \(\dP_1\) in the other 
  direction, and likewise with \(j\) and \(\dP_2\). So the map \(\alpha\) 
  maps \(\dP_1\) to \(\dP_2\), compatibly with the composition. This is the 
  same as giving an isomorphism \(\ti{\alpha}\) as in the statement. The rest 
  of the structure is induced by the \(\dP_i\), so \(\alpha\) is determined 
  by \(\ti{\alpha}\). Conversely, each \(\ti{\alpha}\) as in the statement 
  extends to an interpretation.
\end{proof}

We summarise most of the content of this section in the following theorem 
(mostly contained in~\GGII{\S3}):

\begin{theorem}
  Let \(\tT\) be a theory. Each internal cover \(\tT^*\) of \(\tT\) is 
  bi-interpretable over \(\tT\) with an internal cover of the form 
  \(\tT_\dG\). An interpretation of \(\tT_\dH\) in \(\tT_\dG\) corresponds to 
  a co-span from \(\dG\) to \(\dH\), and each such co-span determines an 
  interpretation.  Maps between interpretations correspond to maps between 
  co-spans.

  In particular, covers \(\tT_1\) and \(\tT_2\) are bi-interpretable over 
  \(\tT\) if and only if the corresponding groupoids are equivalent.
\end{theorem}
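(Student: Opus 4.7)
The plan is to assemble this summary theorem from the results already established throughout Section~\ref{sec:1d}, with essentially no new work required. The first claim, that every internal cover $\tT^*$ is bi-interpretable over $\tT$ with some $\tT_\dG$, is exactly Corollary~\ref{cor:1covistg}: I would take $\dG$ to be the definable groupoid in $\tT$ built from $\tT^*$ via Construction~\ref{con:groupoid}, and obtain the bi-interpretation by comparing both sides through Proposition~\ref{prp:gsets}.

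For the correspondence between stable interpretations $\tT_\dH \to \tT_\dG$ over $\tT$ and co-spans from $\dG$ to $\dH$, the harder direction---that every interpretation arises from a co-span---is exactly Proposition~\ref{prp:spans}. For the converse, given a co-span $\dG \weq \dK \leftarrow \dH$, I would produce the interpretation as $i^f \circ i_g$ using the two constructions recalled at the beginning of~\S\ref{sss:1mor}, with well-definedness of the composition supplied by Proposition~\ref{prp:1comp}. That these two passages are mutually inverse (after normalisation of the co-span's object set) follows by tracing the proof of Proposition~\ref{prp:spans}: the co-span produced there is reconstructed as the subgroupoid of $I_{\tT_\dG/\tT}$ weakly equivalent to the image of $\dH$.

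The statement about maps between interpretations is Proposition~\ref{prp:bimor}, applied once I normalise the representing co-spans so that each $\dH_n$ has object set $\dG_0 \coprod \dH_0$, as discussed after~\ref{prp:spans}. A ``map of co-spans'' here means an isomorphism $\dH_1 \to \dH_2$ which is the identity on the images of $\dG$ and $\dH$, and this is exactly the data treated in~\ref{prp:bimor}.

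The ``in particular'' clause then follows formally: if $\tT_1$ and $\tT_2$ are bi-interpretable via stable interpretations whose compositions are isomorphic to the identity, Proposition~\ref{prp:1comp} identifies the composed co-spans with representatives of the identity interpretations, and Proposition~\ref{prp:bimor} then produces isomorphisms of these composites with the trivial co-spans $\dG_i \weq \dG_i \leftarrow \dG_i$; a short diagram chase using the pushout properties of Proposition~\ref{prp:1pushouts} extracts a weak equivalence between $\dG_1$ and $\dG_2$, and the converse is immediate by functoriality of the $\tT_\dG$ construction. The main conceptual subtlety---already resolved in~\S\ref{sss:1mor}---is the asymmetry between $i^f$ (functorial in groupoid maps) and $i_g$ (requiring inversion of a weak equivalence), which is precisely why co-spans rather than ordinary maps enter the correspondence; beyond this, the work is purely bookkeeping to keep the various normalisations of object sets consistent across compositions.
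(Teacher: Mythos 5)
Your proposal is correct and follows the paper's own proof, which simply combines Corollary~\ref{cor:1covistg}, Proposition~\ref{prp:spans} and Proposition~\ref{prp:bimor}; you cite exactly these results and additionally spell out the bookkeeping (via Proposition~\ref{prp:1comp} and the normalisation of object sets) that the paper leaves implicit. No gaps.
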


More succinctly (and slightly more precisely), the bi-category of internal 
covers over \(\tT\) is equivalent to the bi-category of definable groupoids 
in \(\tT\) (with morphisms given by co-spans and morphisms between them given 
by bi-torsors). See also Remark~\ref{rmk:duskin}.

\begin{proof}
  This is a combination of Corollary~\ref{cor:1covistg}, 
  Proposition~\ref{prp:spans} and Proposition~\ref{prp:bimor}.
\end{proof}

The description above exhibits the groupoid associated to an expansion as 
interpretations of \(\tT^*\) in \(\tT\). In~\cite{GGII}, it was suggested 
that definable sets of an internal cover of \(\tT\) can be viewed a 
generalised imaginary sorts of \(\tT\).  With this point of view, it is 
natural to ask for the structure classifying interpretations of such sorts as 
well. However, such generalised sorts have more structure: in addition to the 
sorts themselves and maps between them (interpretations), we also have maps 
between maps. The notion of equivalence should be modified as well: it is no 
longer reasonable to expect a bijection on the level of morphisms. In fact, 
as the \(1\)-dimensional case already shows, it not reasonable to expect even 
a map.

\section{Generalised imaginaries}\label{sec:2d}
We now suggest how internal covers can play the role of definable sets in the 
above description, by going one dimension higher.

\subsection{Higher internal covers}
\begin{defn}
  Let \(\tT\) be a theory, \(\tT_1\) and \(\tT_2\) internal covers of 
  \(\tT\). For every set of parameters \(A\) for \(\tT\), we denote by 
  \(\Hom_\tT(\tT_1,\tT_2)(A)\) the groupoid whose objects are stable 
  interpretations of \(\tT_1\) in \({\tT_2}_A\), over \(\tT_A\), and whose 
  morphisms are isomorphisms of interpretations over \(\tT\).
\end{defn}

Thus, what we denoted above \(I\) is \(\Hom_\tT(\tT^*,\tT)\). Similar to that 
case, \(\Hom_\tT(\tT_1,\tT_2)\) is ind-definable in \(\tT\): if 
\(\tT_i=\tT_{\dG_i}\) for \(\tT\)-definable groupoids \(\dG_1,\dG_2\), each 
interpretation above can be described like in~\S\ref{sss:1mor} as given by 
certain definable maps \(\dG_i\ra\dH\), a definable condition. Similarly, 
isomorphisms between interpretations are given by the \(\tT\)-definable 
families of maps as in Proposition~\ref{prp:bimor}, uniform in the \(\dH_i\) 
(cf.~\S\ref{sss:2gpd} for a more detailed description.)

An interpretation between theories extends to internal sorts: If 
\(i:\tT\ra\tS\) is an interpretation, and \(\ti{\tT}\) is an internal cover 
of \(\tT\), associated to the groupoid \(\dG\) in \(\tT\), we denote by 
\(i(\ti{\tT})\) the internal cover of \(\tS\) associated to \(i(\dG)\).

We now wish to define (slightly) higher analogs of stable embeddings and 
internal covers. One discrepancy with the \(1\)-dimensional case occurs as 
follows: If \(\tT\) is an internal cover of \(\tT_0\), we might be interested 
in only part of the structure on \(\tT\) when considering, for example, the 
Galois group. As long as this partial structure includes the definable sets 
witnessing the internality, this can be done be replacing \(\tT\) with a 
reduct including only those definable sets. In the higher version, definable 
sets are replaced by definable groupoids in \(\tT\) (equivalently, internal 
covers), and again we may wish to restrict to a partial collection. However, 
there is no reason to expect that this partial collection is the full 
collection of definable groupoids in some reduct of \(\tT\). Furthermore, the 
internality condition for \(0\)-definable sets automatically implies it for 
definable sets over parameters. Again, there is no reason to expect a similar 
statement for groupoids. For this reason, our definition depends on the 
auxiliary data \(\Gamma\) consisting of families of definable groupoids, 
which are the groupoids we wish to preserve. More precisely, we have the 
following.

\begin{defn}\label{def:distin}
  Let \(\tT\) be a theory. The data of \Def{distinguished covers} for \(\tT\) 
  consists of the following:
  \begin{enumerate}
    \item An ind-definable family \(\Gamma_0\) of internal covers of \(\tT\) 
      (equivalently, of definable groupoids in \(\tT\)).
    \item An ind-definable family of interpretations over \(\tT\) between any 
      two covers \(\tT_1,\tT_2\in\Gamma_0\), depending definably on 
      \(\tT_1,\tT_2\) and closed under composition (the full definable family 
      is denoted by \(\Gamma_1\)).
    \item For every two interpretations \(f,g:\tT_1\ra\tT_2\) in 
      \(\Gamma_1\), an ind-definable family of isomorphisms from \(f\) to 
      \(g\), closed under composition and restricting to the identity on 
      \(\tT\).  Again we assume that the family of all such isomorphisms is 
      uniformly (ind-) definable in \(f,g\), and denote it by \(\Gamma_2\).
  \end{enumerate}
  If \(\tT_0\) is a reduct of \(\tT\), we will say that 
  \(\Gamma=\pp{\Gamma_0,\Gamma_1,\Gamma_2}\) is over \(\tT_0\) if the 
  parameters for the ind-definable families above range over definable sets 
  in \(\tT_0\).
\end{defn}

We note that in terms of definable groupoids, the closure under composition 
translates to closure under the composition operation 
from~\S\ref{sss:composition}.

If a theory \(\tT\) is given with a collection \(\Gamma\) of distinguished 
covers, we will often omit further explicit reference to \(\Gamma\), and call 
them \Def{admissible covers}.  We modify notions like bi-interpretation etc., 
to be with respect to \(\Gamma\).  In particular, the notation \(\Hom_{\tT}\) 
will refer to admissible covers and admissible maps.

\begin{defn}\label{def:2stable}
  Let \(i:\tT\ra\tS\) be an interpretation, and let \(\Gamma\) be a 
  collection of distinguished covers of \(\tT\).  We say that the 
  interpretation \(i\) is \Def{\(2\)-stable} (with respect to \(\Gamma\)) if 
  for every two internal covers \(\tT_1,\tT_2\) in \(\Gamma\) over each 
  \(\tT\)-structure \(A\), the natural map 
  \(i(\Hom_{\tT_A}(\tT_1,\tT_2))\ra\Hom_{\tS_A}(i(\tT_1),i(\tT_2))\) is an 
  equivalence.

  If \(\Gamma\) is omitted, we take it to be all definable groupoids in 
  \(\tT\), and all definable morphisms among them.
\end{defn}

The expression \(i(\Hom_\tT(\tT_1,\tT_2))\) makes sense, since, as we had 
noted above, \(\Hom_\tT(\tT_1,\tT_2)\) is definable in \(\tT\).

We note:

\begin{prop}
  A stable interpretation \(i:\tT_1\ra\tT_2\) is \(2\)-stable.
\end{prop}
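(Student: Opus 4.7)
The plan is to use the $1$-dimensional theory to describe both sides explicitly and reduce the statement to $1$-stability. Write $\tT_j'=(\tT_1)_{\dG_j}$ for $\tT_1$-definable groupoids $\dG_j$. By Proposition~\ref{prp:spans} combined with Proposition~\ref{prp:bimor}, the groupoid $\Hom_{\tT_1}(\tT_1',\tT_2')$ is equivalent to the groupoid of co-spans $(\dK,f\colon\dG_2\ra\dK,g\colon\dG_1\weq\dK)$ in $\tT_1$ with morphisms the definable isomorphisms of the witnessing $\dK$ fixing the images of $\dG_j$, and this description is visibly ind-definable; analogously for $\Hom_{\tT_2}(i(\tT_1'),i(\tT_2'))$. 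The natural map sends $(\dK,f,g)$ to $(i(\dK),i(f),i(g))$, and I must verify it is an equivalence of groupoids.

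For fully-faithfulness, fix co-spans $(\dK_1,f_1,g_1),(\dK_2,f_2,g_2)$ in $\tT_1$. Morphisms between them form a definable subset of $\dIso_{\tT_1}(\dK_1,\dK_2)$ cut out by the conditions of commuting with the $f_k$ and $g_k$. Since $i$ is $1$-stable, the natural map $i(\dHom_{\tT_1}(\dK_1,\dK_2))\ra\dHom_{\tT_2}(i(\dK_1),i(\dK_2))$ is a bijection, and $i$, being an interpretation, preserves the defining compatibility conditions, yielding a bijection on morphism sets.

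For essential surjectivity, given a co-span $(\dK',f',g')$ in $\tT_2$ from $i(\dG_1)$ to $i(\dG_2)$, I seek a co-span in $\tT_1$ whose image is co-span-equivalent to $\dK'$. The datum of $\dK'$ beyond the copies of $i(\dG_j)$ is a bi-torsor $\dP'\subseteq\dK'_1$ of morphisms from $i(\dG_{2,0})$-objects to $i(\dG_{1,0})$-objects, equipped with compatible $i(\dG_1)$- and $i(\dG_2)$-actions; this bi-torsor determines $\dK'$ up to co-span equivalence. Since $g'$ is a weak equivalence, each fibre of $\dP'$ over $i(\dG_{2,0})$ is a free, transitive $i(\dG_1)$-set, so after trivialisation the data of $\dP'$ reduces to definable structure maps between $i$-images of $\tT_1$-definable sets. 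By $1$-stability each such map descends uniquely to $\tT_1$, producing a bi-torsor $\dP$ in $\tT_1$ whose associated co-span is sent by $i$ to a co-span equivalent to $\dK'$.

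The main obstacle is the descent step in essential surjectivity: although $1$-stability immediately controls definable maps between $i$-images, the bi-torsor $\dP'$ is a priori an arbitrary $\tT_2$-definable set, not of the form $i(\dP)$. The crux is to make the trivialisation of $\dP'$ precise, so that up to bi-torsor isomorphism it is presented by data in the image of $i$ that $1$-stability transports back to $\tT_1$.
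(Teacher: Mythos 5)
Your full-faithfulness step is fine and agrees with what the paper does: morphisms of interpretations are, via Proposition~\ref{prp:bimor}, points of a definable subset of \(\dHom_{\tT_2}(i(\dK_1),i(\dK_2))\) cut out by conditions on \(i\)-images, so \(1\)-stability descends them. The problem is the essential surjectivity step, and it is a genuine gap --- one you flag yourself but do not close. The witnessing object \(\dK'\) (equivalently the bi-torsor \(\dP'\)) of a co-span in \(\tT_2\) is a \emph{new definable set} of \(\tT_2\), and \(1\)-stability only descends definable \emph{maps between sets already of the form} \(i(\dX)\); it gives no mechanism for descending a definable set that is not presented as such. Your proposed fix --- trivialise \(\dP'\) by a point and descend the resulting structure maps --- runs into exactly the non-canonicity that motivates the whole groupoid formalism: the trivialisation depends on a choice of point of \(\dP'\), different choices give conjugate data, and assembling these into a bi-torsor \(\dP\) defined in \(\tT_1\) (rather than over a parameter from \(\dP'\)) is precisely the unproved content. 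As written, the argument is circular at its crux.

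The paper sidesteps this entirely by invoking the second clause of Proposition~\ref{prp:spans}: after replacing the covers by bi-interpretable ones, one may choose the representing groupoids \(\dG_1,\dG_2\) \emph{in \(\tT\)} (enlarging \(\dG_1\) to the auxiliary \(\dK\) once and for all) so that every stable interpretation of \(i(\tT_1)\) in \(i(\tT_2)\) is induced by an honest definable map of groupoids \(i(\dG_2)\ra i(\dG_1)\), with no auxiliary co-span object left over. Such a map is a point of a definable subset of \(\dHom_{\tS}(i(\dG_{2,1}),i(\dG_{1,1}))\x\dHom_{\tS}(i(\dG_{2,0}),i(\dG_{1,0}))\), i.e.\ a map between \(i\)-images, and this is exactly what \(1\)-stability descends. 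If you want to salvage your route, you need to perform this absorption of the witnessing groupoid into \(\dG_1\) \emph{before} applying \(i\), not after.
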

\begin{proof}
  We may replace \(\tT\) by \(\tT_A\), and thus assume that \(A=\emptyset\). 
  Let \(\tT_1,\tT_2\) be internal covers of \(\tT\).  The statement is 
  invariant when replacing each cover with a bi-interpretable one (over 
  \(\tT\)).  Hence, we may assume that \(\tT_1=\tT_{\dG_1}\) and 
  \(\tT_2=\tT_{\dG_2}\), the covers associated to definable connected 
  groupoids \(\dG_1,\dG_2\) in \(\tT\).

  According to Proposition~\ref{prp:spans}, we may choose \(\dG_1\) and 
  \(\dG_2\) so that a stable interpretation of \(i(\tT_1)\) in \(i(\tT_2)\) 
  corresponds to a definable map of groupoids from \(i(\dG_2)\) to 
  \(i(\dG_1)\). Since \(i\) is stable, this map comes from a map in \(\tT\) 
  (and similarly for morphisms).
\end{proof}

The definition of a \(2\)-cover is analogous to that of an internal cover, as 
formulated in~\ref{prp:1intcov}:

\begin{defn}\label{def:2intcov}
  A \Def{\(2\)-internal cover} of a theory \(\tT\) consists of a theory 
  \(\tT^*\), a collection \(\Gamma\) of families of internal covers of 
  \(\tT^*\) over \(\tT\), and a stable embedding \(\tT\ra\tT^*\) such that 
  \(\pp{\tT^*,\Gamma}\) admits a \(2\)-stable interpretation \(p\) in 
  \(\tT_A\), over \(\tT\) (for some set of parameters \(A\)).
\end{defn}

More explicitly, we require that each internal cover in \(\Gamma\) is 
bi-interpretable, over parameters in \(\tT^*\), with one coming from \(\tT\), 
in a manner coherent with interpretations over \(\tT^*\).  Or, via the 
equivalence with groupoids, that for each family of definable groupoids in 
\(\Gamma\) there is a set of parameters \(B\) in \(\tT^*\) such that each 
groupoid in the family is equivalent, over \(B\), to one coming from \(\tT\) 
(again, in a coherent manner).

As in the \(1\)-dimensional case, the typical examples will come from higher 
dimensional groupoids, which we review next.

\subsection{Higher categories and higher groupoids}
We recall a few definitions from homotopy theory and higher category theory, 
adapted to our language and setup. Though our main references are~\cite{HTT} 
and~\cite{kerodon}, the ideas seem to originate in~\cite{joyal} (and in the 
main case of groupoids, much more classically). We will be interested in the 
notions of \(n\)-category and \(n\)-groupoid discussed in~\HTT{\S2.3.4} 
(through most other parts of the paper we are interested in the case \(n=2\), 
but here it is convenient and harmless to work in general.) There, they are 
defined as special cases of quasi-categories and Kan simplicial sets, 
respectively, but for us it will be more convenient to use terminology that 
makes explicit the finite nature of these structures.  The following 
definitions are a variant of the description in~\HTT{\S2.3.4.9}, which gives 
an equivalent condition (in the case of simplicial sets).

For each \(i\in\NN\), we denote by \([i]\) the ordered set 
\(\Set{0,\dots,i}\). For \(k\in{}[i]\), we identify \(k\) with the map 
\([0]\ra{}[i]\) taking \(0\) to \(k\) (writing \(k_i\) if needed), and we let 
\(\ck=\ck_i:[i-1]\ra{}[i]\) be the unique increasing map with \(k\) not in 
the image. We fix a natural number \(n\) (one could also allow \(n=\infty\) 
to obtain the usual definitions of quasi-categories and spaces, but we will 
not use them).

\begin{defn}
  The signature \(\Sigma_n\) of \(n\)-simplicial sets consists of:
  \begin{enumerate}
    \item A sort \(\dG_i\), for \(0\le{}i\le{}n+1\)
    \item For each weakly increasing map \(t:[i]\ra{}[j]\), where 
      \(i,j\le{}n+1\), a function symbol \(d_t:\dG_j\ra\dG_i\).
  \end{enumerate}
\end{defn}

\subsubsection{Notation}
We define the following auxiliary notation. We let \(\dG_{-1}\) be the one 
element set. For each \(0\leq{}m\le{}n+1\), and \(i\le{}m\), the map 
\(d_\ci:\dG_m\ra\dG_{m-1}\) is called the \Def{\(i\)-th face map}. We denote 
by \(\der=\der_m:\dG_m\ra\dG_{m-1}^{m+1}\) the Cartesian product of these 
maps, and by \(\der^\ci:\dG_m\ra\dG_{m-1}^m\) the Cartesian product with 
\(i\) omitted. If \(g\in\dG_m\) and \(t:[k]\ra{}[m]\), we sometimes write 
\(g_t\) in place of \(d_t(g)\) (in particular, for \(t=\ci\) or \(t=i\)).

For \(m\ge{}-1\), the set \(\Cyc{m+1}\) of \Def{\(m+1\)-cycles} is the 
definable subset of \({\dG_m}^{m+2}\) given by the conjunction of the 
equations \(d_\ci(x_j)=d_\ck(x_l)\) for all \(0\le{}j,l\le{}m+1\), 
\(0\le{}i,k\le{}m\) satisfying \(\cj\circ\ci=\cl\circ\ck:[m-1]\ra{}[m+1]\) 
(So no conditions when \(m=0\). Note that \(m+1\)-cycles are potential 
boundaries of \(m+1\)-dimensional elements, but are themselves 
\(m\)-dimensional. This is compatible with the notation in~\cite{HTT}.)

For each \(0\le{}i\le{}m+1\), the set \(\Horn{m+1}{i}\) of \Def{\(i\)-th 
\(m+1\)-horns} is the subset of \({\dG_m}^{m+1}\) defined by the same 
conditions, with \(x_i\) omitted. Hence, the projection 
\(\pi_\ci:\Cyc{m}\ra\dG_{m-1}^m\) omitting the \(i\)-th coordinate takes 
values in \(\Horn{m}{i}\).

We extend the notation by inductively setting \(\dG_m=\Cyc{m}\) for 
\(m>n+1\), and \(d_\ci:\dG_m\ra\dG_{m-1}\) the \(i\)-th projection 
(\(0\le{}i\le{}m\)). Consequently, all the above notation makes sense for 
arbitrary natural number \(m\).

\begin{defn}\label{def:ncath}
  Let \(n\ge{}1\). The theory \(\tC_n\) of \(n\)-categories in this signature 
  says:
  \begin{enumerate}
    \item \(d_{t\circ{}s}=d_s\circ{}d_t\) for \(s,t\) composable, \(d_t\) is 
      the identity whenever \(t\) is. It follows that \(\der_m\) takes values 
      in \(\Cyc{m}\) and \(\der_m^\ci\) in \(\Horn{m}{i}\).
    \item For each \(0<m\le{}n+1\) and each \(0<i<m\), the map 
      \(\der_m^\ci:\dG_m\ra\Horn{m}{i}\) is surjective.
    \item For \(m=n+1,n+2\), \(\der_m^\ci\) is bijective for each \(0<i<m\).
  \end{enumerate}
  The theory \(\tG_n\) of \(n\)-groupoids is the extension of \(\tC_n\) where 
  the conditions above are required to also hold for \(i=0,m\).
\end{defn}

Note that by the third condition, the set \(\dG_{n+1}\) is completely 
determined by the rest of the data. However, it is still convenient to have 
it for the statement of the axiom. It follows from the axioms that the unique 
map \(d_f:\dG_0\ra\dG_m\) is injective, and we will use it to identify 
\(\dG_0\) with its image in each \(\dG_m\), writing \(a\) or \(a^m\) for 
\(d_f(a)\) (this map assigns to each object \(a\) the \(m\)-dimensional 
identity morphisms at \(a\)).

The first condition (when \(n=\infty\)) is the usual definition of a 
simplicial set, the second is the definition of quasi-category (or space, in 
the case of a groupoid, where it is called the Kan condition), and the third 
specifies that the object is an \(n\)-category, rather than a quasi-category.  
By a \Def{definable \(n\)-category} or a \Def{definable \(n\)-groupoid} in 
\(\tT\) we mean an interpretation in \(\tT\) of the respective theory.

The intuition is, roughly speaking, that the horns represent configurations 
of (higher) composable arrows, but the composition (represented by the 
element \(g\)) need not be uniquely determined, except on the highest 
dimension. We refer to the first chapters of~\cite{HTT} for further 
explanations, but explain how the case \(n=1\) of the formalism recovers 
usual categories and groupoids:

\begin{example}\label{exa:1gpd}
  A category can be viewed as a \(1\)-category in the above sense by taking 
  \(\dG_0\) the set of objects, \(\dG_1\) the set of morphisms, and \(\dG_2\) 
  the set of pairs of composable morphisms (as we are forced by the axioms).  
  The maps \(d_\Omit{0},d_\Omit{1}:\dG_1\ra\dG_0\) are the codomain and 
  domain maps, the unique map \(\dG_0\ra\dG_1\) assigns to each object its 
  identity, and the maps \(d_\Omit0,d_\Omit{2},d_\Omit{1}:\dG_2\ra\dG_1\) are 
  the two projections and the composition. The only non-trivial instances of 
  the third conditions are when \(m=2\) and \(i=1\), which asserts that any 
  two composable arrows have a unique composition, and when \(m=3\), which 
  corresponds to associativity of the composition.

  Conversely, each \(1\)-category determines a category by reversing this 
  process (and likewise for groupoids).
\end{example}

As in the \(1\)-dimensional case, the axioms imply that for \(0<i<n+1\), the 
relation \(\Cyc{n+1}\) is the graph of a ``composition'' function 
\(c_i:\Horn{n}{i}\ra\dG_n\), by projecting to the \(i\)-coordinate. For 
\(n\)-groupoids, we also have such maps for \(i=0,n+1\).

\begin{remark}
  If \(\dG\) is an \(n\)-category, and \(m>n\), our extension of the notation 
  determines a canonical way of viewing \(\dG\) as a \(\Sigma_m\) structure, 
  and as such it is an \(m\)-category. Consequently, we will view \(\dG\) as 
  an \(m\)-category for each \(m>n\). If \(\dG\) was an \(n\)-groupoid, it 
  will similarly be an \(m\)-groupoid for \(m>n\).
\end{remark}

\subsubsection{Homotopy sets}
The definition of homotopy sets admit a definable version. Let \(\dG\) be an 
\(n\)-groupoid, and let \(b\in\Cyc{m}\) (\(m\ge{}0\)). We let 
\(\dS(\dG,b)=\der_m^{-1}(b)\) be the set of elements of \(\dG_m\) with 
boundary \(b\). For \(\alpha,\beta\in\dS(\dG,b)\), we write 
\(\alpha\sim\beta\) (or \(\alpha\sim_b\beta\)) if some \(h\in\dG_{m+1}\) 
satisfies \(h_\Omit0=\alpha\), \(h_\Omit1=\beta\) and 
\(h_\ci={d_t(\alpha)}_\ci\), where \(t:[m+1]\ra{}[m]\) is the surjective map 
with \(t(1)=0\) (so \(h\) is a homotopy from \(\alpha\) to \(\beta\), 
relative to the boundary \(b\)).  This is an equivalence relation by the Kan 
condition. Note that when \(m\ge{}n\), this relation coincides with equality.

For \(a\in\dG_0\) and \(k\ge{}0\), we write \(\dS_k(\dG,a)\) for 
\(\dS(\dG,b)\), where \(b\) is the constant boundary with value \(a\) in 
\(\Cyc{k}\).  These are \(a\)-definable sets, whose elements correspond to 
the set of pointed maps from the \(k\)-sphere to \(\dG\) with base point 
\(a\) (note that \(\dK_0(\dG,a)=\dG_0\) does not actually depend on \(a\)).  
The \(k\)-th \Def{homotopy set} of \(\dG\) at \(a\) is the quotient 
\(\pi_k(\dG,a)=\dS_k(\dG,a)/\sim\) (in the case of usual simplicial sets, 
this is one of the equivalent definitions by~\Kero{00W1}).

If \(f:\dG\ra\dH\) is a groupoid map (between definable \(n\)-groupoids in 
the theory \(\tT\)), it commutes with all the structure above, and therefore 
induces definable maps of sets \(\pi_k(f,a):\pi_k(\dG,a)\ra\pi_k(\dH,f(a))\).

\begin{defn}\label{def:whe}
  A definable map \(f:\dG\ra\dH\) of \(n\)-groupoids is a \Def{weak 
  equivalence} if \(\pi_k(f,a):\pi_k(\dG,a)\ra\pi_k(\dH,f(a))\) is a 
  bijection for all \(0\le{}k\le{}n\) and \(a\in\dG_0\).
\end{defn}

\begin{remark}\label{rmk:whe}
  More explicitly, for non-empty \(\dG\), the map \(f:\dG\ra\dH\) is a weak 
  equivalence if and only if the following conditions are satisfied for each 
  \(n\ge{}k\ge{}0\) and each \(a\in\dG_0\):
  \begin{enumerate}
    \item For every \(g_0,g_1\in\dS_k(\dG,a)\), if \(f(g_1)\sim{}f(g_2)\) 
      then \(g_1\sim{}g_2\).
    \item For every \(h\in\dS_k(\dH,f(a))\), there is \(g\in\dS_k(\dG,a)\) 
      with \(f(g)\sim{}h\).
  \end{enumerate}
  Alternatively, \(f\) is a weak equivalence if and only if it induces a 
  surjective map on \(\dS\)-classes, i.e., for each \(\dG\)-cycle \(b\), and 
  each \(v\in\dS(\dH,f(b))\), there is \(u\in\dS(\dG,b)\) with 
  \(f(u)\sim{}v\). (To prove these equivalences, it suffices to show that 
  they hold in each model, where each of these conditions is equivalent to 
  homotopy equivalence,~\Kero{00WV}.)
\end{remark}

\begin{defn}\label{def:1eq}
  The \(n\)-groupoids \(\dG_1\) and \(\dG_2\) are \Def{equivalent} if there 
  are weak equivalences \(f_1:\dG_1\ra\dH\) and \(f_2:\dG_2\ra\dH\) for some 
  \(\dH\).
\end{defn}

\begin{example}
  Let \(\dG,\dH\) be definable groupoids, viewed as definable \(1\)-groupoids 
  as in Example~\ref{exa:1gpd}. A map \(f:\dG\ra\dH\) is a functor. For 
  \(k=0\), the first condition in Remark~\ref{rmk:whe} says that if \(a,b\) 
  are objects of \(\dG\), and there is a morphism between \(f(a)\) and 
  \(f(b)\) in \(\dH\), then there is a morphism from \(a\) to \(b\) in 
  \(\dG\). The second condition says that every object \(h\) of \(\dH\) has a 
  morphism to an object in the image of \(f\). Together, this part implies 
  that \(f\) induces a bijection on isomorphism classes.

  For \(k=1\), the first condition says that if \(g_1,g_2\) are automorphisms 
  of \(a\) such that \(f(g_1)=f(g_2)\), then \(g_1=g_2\), i.e., that \(f\) is 
  faithful. The second condition says that \(f\) is full.

  Hence, \(f\) is a weak equivalence if and only if it is a weak equivalence 
  in the sense of~\S\ref{sss:1grpd}. In particular, our notion of equivalence 
  coincides with~\GGII{\S3}.
\end{example}

As in~\S\ref{sss:composition}, equivalence of \(n\)-groupoids is an 
equivalence relation: If \(\dH\) and \(\dH'\) witness that \(\dG_2\) is 
equivalent to \(\dG_1\) and \(\dG_3\), respectively, the  pushout 
\(\dH\cop_{\dG_2}\dH'\) witnesses the equivalence of \(\dG_1,\dG_3\).

\begin{remark}
  The group operation on \(\pi_k(\dG,a)\) (for \(k>0\)) is also definable, 
  but we will not use this.
\end{remark}

\begin{remark}
  The equivalence of our definitions of homotopy groups and weak equivalence 
  with other formulations that appear, for example, in~\Kero{00V2} does not 
  hold in the definable setting, in general. For example, the analogue of 
  Whitehead's Theorem (\Kero{00WV}) is usually false (as seen already in the 
  one-dimensional setting).
\end{remark}

\subsubsection{Morphism groupoids}
Our next goal is to define the space of morphisms between two objects \(a,b\) 
of an \(n\)-category \(\dG\), and obtain a (weak) version of the Yoneda 
embedding that will make sense in the definable setting.

Let \(\dG\) be an \(n\)-category, and let \(a,b\in\dG_0\) be two objects.  As 
in~\HTT{\S1.2.2}, we define the \(\Sigma_{n-1}\)-structure \(\LHom_\dG(a,b)\) 
by
\begin{equation}
  {\LHom_\dG(a,b)}_k=\St{g\in\dG_{k+1}}{g_0=a,g_\Omit{0}=b^k},
\end{equation}
For \(k\le{}n\). The structure maps are given by \(t\mt{}{d^\dG}_{t^+}\), 
where \(t^+:[u+1]\ra{}[k+1]\) is given by \(t^+(i+1)=t(i)+1\) for 
\(i\in{}[u]\) and \(t^+(0)=0\). It is clear that \(\LHom_\dG(a,b)\) is 
uniformly definable over \(a,b\) when \(\dG\) is definable. It follows 
from~\HTT{\S\S4.2.1.8,2.3.4.18,2.3.4.19} that this structure is equivalent to 
an \(n-1\)-groupoid, but since we are not working up to equivalence, we need 
to prove that it is already an \(n-1\)-groupoid by itself (which we do in 
Proposition~\ref{prp:hom} below).

If we fix a ``generic'' object \(v\in\dG_0\), the assignment 
\(b\mt\LHom(v,b)\) looks like the object part of the Yoneda embedding for 
usual categories. One could hope that this is part of a higher Yoneda 
embedding in our situation as well. However, since there is no composition 
function for morphisms in \(\dG\), such an embedding does not exist as a 
functor (it exists non-canonically for set-theoretic quasi-categories, but not 
definably).  Instead, we have the following situation (explained 
in~\HTT{\S2.1}): There is an \(n\)-category \(\Under{v}\) (\HTT{\S2.3.4.10}), 
defined by \({(\Under{v})}_k=\St{g\in\dG_{k+1}}{g_0=v}\), and a map of 
\(n\)-categories \(\pi:\Under{v}\ra\dG\), given by \(\pi(g)=g_\Omit{0}\). By 
definition, the fibre of this map over \(b\in\dG_0\) is \(\LHom_\dG(v,b)\).  
Moreover, this map is a \emph{left fibration} (\HTT{\S2.1.22}): Given 
\(g\in\Horn[\Under{v}]{k}{i}\), for \(i<k\), any ``filling'' \(h\in\dG_k\) of 
\(\pi(g)\) (so that \(\der^\ci(h)=\pi(g)\)) can be lifted to a filling 
\(\ti{h}\in\Under{v}\) with \(\der^\ci(\ti{h})=g\) and \(\pi(\ti{h})=h\). It 
follows from this that the association \(b\mt\pi^{-1}(b)\) behaves like a 
functor of \(b\), but this is only precisely true in the homotopy category.  

We will show that in the case that \(h\) above is invertible, the lifting 
property above holds for our definable version of equivalence. To do this, we 
show that the map \(\pi\) behaves like a local system: the fibres can be 
continued along (suitable) contractible pieces. The pieces we have in mind 
are defined as follows:
\begin{defn}\label{def:discs}
The simplicial set \(D^l\), for \(l\ge{}0\), is defined by 
\(D^l_k={\Set{0,\dots,l}}^{\Set{0,\dots,k}}\) (all maps, no necessarily 
increasing, from \([k]\) to \([l]\)), with structure maps given by 
composition.
\end{defn}
We will often write elements of \({D^l}_k\) as words of length \(k+1\) in the 
``digits'' \(0,\dots,l\). By the usual Yoneda lemma, maps \(D^l\ra{}D^m\) 
correspond (via composition) to functions 
\(\Set{0,\dots,l}\ra\Set{0,\dots,m}\).  Note that homotopically, all these 
maps are weak equivalences, and in particular the map to the point \(D^0\), 
so that all \(D^l\) are contractible.

We now extend the definition of morphisms as follows: for \(\dG\) a definable 
\(n\)-category, let \(a\in\dG_0\) be an object, and let \(f:D^l\ra\dG\) be a 
map of simplicial sets (perhaps over parameters). We define a 
\(\Sigma_{n-1}\)-structure \(\LHom_\dG(a,f)\) as follows: For each 
\(k\le{}n\),
\begin{equation}
  {\LHom_\dG(a,f)}_k=\St{\pp{g,e}\in\dG_{k+1}\x{D^l}_k}{g_0=a,g_\Omit{0}=f(e)}
\end{equation}
with structure maps given as before by 
\(\pp{g,e}\mt\pp{d_{t^+}(g),e\circ{}t}\) for each weakly increasing function 
\(t:[u]\ra{}[k]\). In other words, \(\LHom_\dG(a,f)\) is the pullback under 
\(f\) of the map \(\pi:\Under{v}\ra\dG\) described above. For \(l=0\) and 
\(f\) mapping the point \(D^0\) to \(b\), we recover the previous definition.  
In general, the projection determines a map \(\LHom_\dG(a,f)\ra{}D^l\) of 
simplicial sets, which can be viewed as the ``restriction'' of \(\pi\) to 
\(D^l\).  If \(h:D^r\ra{}D^l\) is a map of simplicial sets, there is an 
induced map \(\hat{h}:\LHom_\dG(a,f\circ{}h)\ra\LHom_\dG(a,f)\), given by 
\(\hat{h}(\pp{g,e})=\pp{g,h(e)}\).

\begin{prop}\label{prp:hom}
  Let \(\dG\), \(a\in\dG_0\) and \(f:D^l\ra\dG\) be as above.
  \begin{enumerate}
    \item The structure \(\LHom_\dG(a,f)\) is an \(n-1\)-groupoid.
    \item For each map \(h:\Set{0,\dots,r}\ra\Set{0,\dots,l}\) (identified 
      with the corresponding map \(D^r\ra{}D^l\)), the induced map 
      \(\hat{h}:\LHom_\dG(a,f\circ{}h)\ra\LHom_\dG(a,f)\) is a weak 
      equivalence.
  \end{enumerate}
\end{prop}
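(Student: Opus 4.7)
My plan for part (1) is to verify the $(n-1)$-groupoid axioms for $\LHom_\dG(a,f)$ directly: the simplicial identities are inherited from those of $\dG$ and $D^l$ via the shift $t\mapsto t^+$ (noting $(s\circ t)^+=s^+\circ t^+$), so the substance is horn-filling. Given a horn $\Lambda^m_i$ in $\LHom_\dG(a,f)$, i.e.\ data $(g_j,e_j)_{j\ne i}$ with matching faces, I fill in two stages. First, the $e_j$'s form a horn $\Lambda^m_i$ in $D^l$, fillable with some $e\in D^l_m$ because $D^l$ is the nerve of the contractible groupoid on $\{0,\dots,l\}$ and hence a Kan complex. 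Second, I seek $g\in\dG_{m+1}$ with $g_0=a$, $g_{\widehat{0}}=f(e)$, and $g_{\widehat{j+1}}=g_j$ for $j\ne i$, which amounts to filling the horn $\Lambda^{m+1}_{i+1}$ in $\dG$. When $0\le i<m$ this is an inner horn, supplied directly by the $n$-category axiom of $\dG$, and bijectivity of $\der_m^{\ci}$ in the top dimensions of $\LHom_\dG(a,f)$ transfers from the corresponding bijectivity in $\dG$.

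The outer-horn case $i=m$, producing $\Lambda^{m+1}_{m+1}$ in $\dG$, is the main obstacle. The plan is to adapt the classical argument that the fibres of a left fibration form a Kan complex (the projection $\pi:\Under{a}\to\dG$ is a left fibration by the inner-horn step above): extend the given data to an auxiliary inner horn in $\dG_{m+2}$ by inserting degeneracies at the fixed vertex $a$, fill this inner horn using the $n$-category axiom of $\dG$, and read off the desired $g$ as a specific face of the resulting $(m+2)$-simplex. Since only degeneracies and inner-horn fillings are involved, the construction remains definable over the same parameters.

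For part (2), my plan is to reduce to the case of a vertex inclusion via the two-out-of-three property for weak equivalences, and then verify the surjectivity-on-$\dS$-classes criterion of Remark~\ref{rmk:whe}. For any vertex $v\in D^r_0$, the equality $h\circ\iota^r_v = \iota^l_{h(v)}:D^0\to D^l$, applied through the functor $\LHom_\dG(a,f\circ{-})$, yields a factorisation $\hat{\iota}^l_{h(v)} = \hat{h}\circ\hat{\iota}^r_v$. It therefore suffices to show that each vertex inclusion $\hat{\iota}_v:\LHom_\dG(a,f(v))\to\LHom_\dG(a,f)$ is a weak equivalence. For this sub-claim: given a cycle $\beta$ in $\LHom_\dG(a,f(v))$ and an element $(g,e)\in\dS(\LHom_\dG(a,f),\hat{\iota}_v(\beta))$, the component $e\in D^l_k$ has all faces constant at $v$, so contractibility of $D^l$ yields an element $\tilde{e}\in D^l_{k+1}$ witnessing $e\sim v^{(k)}$ with exactly the $s_0$-degeneracy boundary required by the $\sim$-relation. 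Lifting $\tilde{e}$ via the horn-filling established in part (1) then produces $(\tilde{g},\tilde{e})\in\LHom_\dG(a,f)_{k+1}$ whose remaining face has the form $\hat{\iota}_v(g')$, providing the needed preimage. The subtlety will be to arrange the boundary of $\tilde{e}$ to match the exact $s_0$-degeneracy shape, which will follow since the boundary data are themselves constant at $v$.
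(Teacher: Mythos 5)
Your treatment of the inner horns in part (1), and all of part (2), track the paper's own argument closely. The paper likewise lifts the horn to a (unique) filler \(e\in{D^l}_m\), prepends \(f(e)\) to the given faces, and fills the resulting horn \(\Horn{m+1}{i+1}\) in \(\dG\); the bulk of its proof is the index verification that the tuple \(\pp{f(e),g^0,\dots,g^m}\) really lies in \(\Horn{m+1}{i+1}\), which you assert without checking but which is routine. For part (2) the paper transports spheres along paths in \(D^l\) using the relative (Kan-fibration) form of the horn-filling, which is exactly the mechanism behind your vertex-inclusion reduction; your two-out-of-three packaging is legitimate because every object of \(\LHom_\dG(a,f\circ{}h)\) lies over a vertex of \(D^r\), so every basepoint is reached.

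The genuine gap is your outer-horn case \(i=m\) in part (1). The induced horn in \(\dG\) is \(\Horn{m+1}{m+1}\), missing the face opposite the \emph{last} vertex, so the edge governing fillability is the one joining the last two vertices; it lies inside the face \(g_{\Omit{0}}=f(e)\) and is therefore \(f\) applied to an edge of \(D^l\). When \(l=0\) (the classical \(\LHom_\dG(a,b)\)) that edge is degenerate and the trick you describe --- insert a degeneracy, fill an inner horn one dimension up, read off a face --- does work; but for general \(f:D^l\ra\dG\) the edge is a non-degenerate edge of \(D^l\) pushed into \(\dG\): an equivalence, not an identity. Filling outer horns whose distinguished edge is merely an equivalence is Joyal's theorem, not an elementary degeneracy manipulation, and your degeneracies are in any case placed at the wrong end (at \(a\), which is vertex \(0\), whereas the missing face sits at the final vertex). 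The paper avoids all of this by quantifying differently: it first establishes the filling of every horn \({\Lambda^m_i}(\LHom_\dG(a,f))\) with \(i<m\) (all of which correspond to \emph{inner} horns of \(\dG\)) and then invokes \HTT{\S1.2.5.1} to conclude that \(\LHom_\dG(a,f)\) is already a Kan complex; this transfers to the definable setting because the filler set of a given outer horn is definable and its non-emptiness can be checked model by model. You should either adopt that citation, or first reduce to the case of a degenerate distinguished edge before running your direct construction.
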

\begin{proof}
  \begin{enumerate}
    \item Let \(\dH=\LHom_\dG(a,f)\). It is clear that \(\dH\) is a 
      simplicial definable set. To check the Kan condition, we prove a 
      stronger claim, namely, that the projection \(\pi:\dH\ra{}D^l\) is a 
      Kan fibration: given a horn element \(h\in\Horn[\dH]{m}{i}\) and an 
      element \(d\in{}{D^l}_m\) with \(\der_m^\ci(d)=\pi(h)\), there is 
      \(\ti{d}\in\dH_m\) with \(\pi(\ti{d})=d\) and \(\der_m^\ci(\ti{d})=h\).

      Let \(h\in\Horn[\dH]{m}{i}\) be a horn element as above, with 
      \(0\le{}i\le{}m\le{}n+2\).  Such an element is given by a matching 
      sequence of element \(h^j=\pp{g^j,e^j}\), for \(j\in{}[m]\), 
      \(j\ne{}i\), with \(g^j\in\dG_m\) and with 
      \(\pi(h)=\pp{e^0,\dots,e^m}\) an element of \(\Horn[D^l]{m}{i}\). In 
      \(D^l\), each such horn element comes from a \emph{unique} element of 
      \({D^l}_m\). Let \(e\in{}{D^l}_m\) be this element, and let 
      \(g^{-1}=f(e)\in\dG_m\).

      We claim that \(\ti{g}=\pp{g^{-1},g^0,\dots,g^m}\in\dG_m^{m+1}\) is in 
      \(\Horn{m+1}{i+1}\). To show that, we need to show that if 
      \(\Omit{b}\circ\Omit{a}=\Omit{d}\circ\Omit{c}:[m-1]\ra{}[m+1]\) for 
      some \(b,d\in{}[m+1]\), \(b,d\ne{}i+1\) and \(a,c\in{}[m]\), then 
      \({g^{b-1}}_\Omit{a}={g^{d-1}}_\Omit{c}\).

      Assume first that \(a,b,c,d\ge{}1\). The assumption on \(h\) implies 
      that
      \begin{equation}\label{eq:compath}
        {h^{b-1}}_{\Omit{a-1}_{m-1}}={h^{d-1}}_{\Omit{c-1}_{m-1}}
      \end{equation}
      whenever \(a,b,c,d\) satisfy
      \begin{equation}\label{eq:compatm}
        \Omit{b-1}_m\circ\Omit{a-1}_{m-1}=\Omit{d-1}_m\circ\Omit{c-1}_{m-1}
      \end{equation}
      Equation~\eqref{eq:compath} implies that 
      \({g^{b-1}}_{{\Omit{a-1}_{m-1}}^+}={g^{d-1}}_{{\Omit{c-1}_{m-1}}^+}\) 
      under this condition. But \({\Omit{j}_{m-1}}^+=\Omit{j+1}_m\) for all 
      \(j\in{}[m-1]\), so we find that 
      \({g^{b-1}}_{\Omit{a}_m}={g^{d-1}}_{\Omit{c}_m}\) whenever 
      Equation~\eqref{eq:compatm} holds. But Equation~\eqref{eq:compatm} is equivalent 
      to
      \begin{equation}
        \Omit{b}_{m+1}\circ\Omit{a}_m=\Omit{d}_{m+1}\circ\Omit{c}_m
      \end{equation}
      so we obtain the required condition when \(a,b,c,d\ge{}1\).

      If \(b=0\) or \(d=0\), the corresponding element of \(\dG_m\) is 
      \(g^{-1}\). In this case, the condition follows from the definition of 
      \(\LHom_\dG(a,f)\): for example, if \(b=0\) we must have \(c=0\) and 
      \(d=a+1\), so we need to show that 
      \({g^a}_\Omit{0}={f(e)}_\Omit{a}=f(e_\Omit{a})=f(e^a)\), and we are 
      done.  If \(a=0\) or \(c=0\), the condition forces \(b=0\) or \(d=0\), 
      so we are back to the same case.

      This concludes the proof that \(\ti{g}\in\Horn{m+1}{i+1}\). If \(i<m\), 
      the Kan condition on \(\dG\) implies that that we may find 
      \(g\in\dG_{m+1}\) restricting to \(\ti{g}\). In follows that \(g_0=a\) 
      and \(g_\Omit0=f(e)\), so that \(\pp{g,e}\) solves the lifting problem.  
      It follows from~\HTT{\S1.2.5.1} that the case \(i<m\) is sufficient.

      When \(m=n\) or \(m=n+1\), the injectivity follows similarly from 
      injectivity for \(\dG\) (and for \(D^l\)).
    \item We use Remark~\ref{rmk:whe}. An element in 
      \({\LHom_\dG(a,f\circ{}h)}_0\) is given by \(g\in\dG_1\) with \(g_0=a\) 
      and \(g_1=f(h(e))\), where \(e\in{}[u]\). Assume that 
      \(\pp{s,c},\pp{t,d}\in{\LHom_\dG(a,f\circ{}h)}_k\) satisfy 
      \(s_\Omit0=t_\Omit0=f(h(c))=f(h(d))=h(e)\) and 
      \(s_\Omit{i}=t_\Omit{i}=g\) for \(k\ge{}i>0\), so that they are 
      elements of \(\dS_k(\LHom_\dG(a,f\circ{}h))\). Assume also that we are 
      given some \(w\in\dG_{k+2}\) satisfying \(w_\Omit1=s\), \(w_\Omit2=t\) 
      and \(w_\Omit{i}=g\) for \(i>2\), and some \(v\in{}{D^u}_{k+1}\) with 
      \(v_\Omit{0}=c\), \(v_\Omit1=d\) and \(v_\Omit{i}=e\) for \(i>1\), and 
      with \(f(h(v))=w_\Omit0\) (this is a homotopy from \(\pp{s,c}\) to 
      \(\pp{t,d}\)). Then \(\pp{w,h(v)}\) is a homotopy from \(\pp{s,h(c)}\) 
      to \(\pp{t,h(d)}\). The argument for \(k=0\) is similar (using that 
      \(D^u\) is connected).

      For the second condition of Remark~\ref{rmk:whe}, let \(g\in\dG_1\) be 
      such that \(g_0=a,g_1=f(e)\) for some \(e\in{}[l]\), so that 
      \(b=\pp{g,e}\) represents a basepoint of \(\LHom_\dG(a,f)\), and let 
      \(\pp{s,c}\in\dS_k(\LHom_\dG(a,f),b)\). Then \(c\in\dS_k(D^l,e)\) is 
      the constant function \(e\). Let \(e'\in{}[u]\), and let 
      \(\gamma\in{}D^l\) be some path from \(h(e')\) to \(e\). By the Kan 
      condition above, there is an element \(s'\) of \(\LHom_\dG(a,f)\) above 
      \(\gamma\), restricting to \(s\). This \(s'\) serves as a homotopy from 
      \(s\) to an element over \(h(e')\), which is thus in the image of 
      \(\hat{h}\).\qedhere
  \end{enumerate}
\end{proof}

\begin{cor}\label{cor:hom}
  Let \(\dG\) be an \(n\)-category, \(v,a,b\in\dG_0\) objects. Each 
  isomorphism \(t\in\dG_1\) from \(a\) to \(b\) determines an equivalence 
  \(e_t:\LHom_\dG(v,a)\ra\LHom_\dG(v,b)\). If \(s\in\dG_1\) is another 
  isomorphism from \(a\) to \(b\), each isomorphism \(m:t\ra{}s\) determines 
  an isomorphism \(e_m:e_t\ra{}e_s\).
\end{cor}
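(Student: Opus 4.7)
The plan is to deduce both assertions from Proposition~\ref{prp:hom}(2), by packaging the isomorphism data ($t$ and, later, $m$) as maps from suitable contractible simplicial sets $D^l$ into $\dG$. For the equivalence $e_t$, I would first extend $t$ to a definable map $f_t : D^1 \to \dG$ with $f_t(0) = a$, $f_t(1) = b$, and $f_t(01) = t$. Once such an $f_t$ is available, Proposition~\ref{prp:hom}(2) applied to the two vertex inclusions $i_0, i_1 : D^0 \to D^1$ produces a cospan of weak equivalences of $(n-1)$-groupoids
\[
\LHom_\dG(v, a) \xrightarrow{\widehat{i_0}} \LHom_\dG(v, f_t) \xleftarrow{\widehat{i_1}} \LHom_\dG(v, b),
\]
which by Definition~\ref{def:1eq} constitutes the claimed $e_t$.

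For the isomorphism $e_m : e_t \to e_s$, the approach is analogous one dimension up: encode $m$ as a definable map $f_m : D^2 \to \dG$ (viewing $m$ as a $2$-simplex whose vertices $0, 1, 2$ are sent to $a, b, b$, with edges $01, 02$ mapped to $t, s$ and edge $12$ to $\1_b$) whose restrictions along the inclusions $D^1 \hookrightarrow D^2$ corresponding to the edges $01$ and $02$ recover $f_t$ and $f_s$. Proposition~\ref{prp:hom}(2) applied to these inclusions then yields weak equivalences $\LHom_\dG(v, f_t) \to \LHom_\dG(v, f_m) \leftarrow \LHom_\dG(v, f_s)$ compatible with the two cospans above, providing the data of the desired $e_m$.

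The main obstacle will be the construction of $f_t$ (and analogously $f_m$) in a way that is definable in $t$. The simplicial set $D^1$ contains many non-increasing simplices --- the edge $10$, the $2$-simplices $010$ and $101$, and higher alternating words --- so $f_t$ cannot be produced by the inner-horn fillers of Definition~\ref{def:ncath} alone. The plan is to use invertibility of $t$ to select an inverse $t^{-1} : b \to a$ for the edge $10$ and then to proceed by induction on dimension, converting each outer-horn filling problem into an inner-horn one by pre- or post-composing with $t$ or $t^{-1}$, in the spirit of the classical argument that an invertible edge of a quasi-category extends to a map from the nerve of the contractible groupoid on two objects (cf.~\HTT{\S1.2.4}). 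Since $\dG$ is an $n$-category, the induction terminates at dimension $n+1$, where uniqueness of fillers takes over; and since each filler is chosen from a definable set, the whole extension can be made uniform in $t$.
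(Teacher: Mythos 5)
Your proposal is correct and follows essentially the same route as the paper, whose entire proof is to apply Proposition~\ref{prp:hom}(2) to maps from \(D^1\) and \(D^2\) determined by \(t\), \(s\) and \(m\). The extra care you take in extending \(t\) to a genuine map \(f_t:D^1\ra\dG\) (handling the non-increasing simplices and outer horns via invertibility) is a real gap the paper glosses over in the general statement, though it carries out exactly this construction explicitly in the \(2\)-dimensional case in Example~\ref{exa:hom2gpd}.
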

\begin{proof}
  Apply Proposition~\ref{prp:hom} to maps from \(D^1\) and from \(D^2\) 
  determined by \(t\), \(s\) and \(e\).
\end{proof}

We describe the equivalence explicitly in the \(2\)-dimensional case, which 
will be most relevant for us:

\begin{example}\label{exa:hom2gpd}
  Let \(\dG\) be a \(2\)-groupoid, \(v,a,b\in\dG_0\) and \(f\in\dG_1\) with 
  \(f_0=a\) and \(f_1=b\). Since \(\dG\) is a groupoid, there is \(h\in\dG_2\) 
  (not necessarily unique) with \(h_\Omit0=f\) and \(h_\Omit1=b\). We denote 
  \(f^{-1}=h_\Omit2\). By the \(2\)-groupoid axioms, \(h\) has a uniquely 
  determined inverse \(h^{-1}\). Let \(\gamma:D^1\ra\dG\) be the unique map 
  with \(\gamma(101)=h\), so that \(\gamma(01)=g\) and \(\gamma(10)=g^{-1}\).  
  Then \(\dH=\LHom_\dG(v,\gamma)\) can be described as follows:
  \begin{enumerate}
    \item \(\dH_0={\LHom_\dG(v,a)}_0\coprod{}{\LHom_\dG(v,b)}_0\) (this is 
      just the union if \(a\ne{}b\), but if \(a=b\) we take disjoint copies).
    \item
      Let \(\dX=\St{g\in\dG_2}{g_0=v,g_\Omit0=f}\), and let 
      \(\dX^{-1}=\St{g\in\dG_2}{g_0=v,g_\Omit0=f^{-1}}\) (again taking 
      disjoint copies if \(f=f^{-1}\)). Then
      \[
      \dH_1={\LHom_\dG(v,a)}_1\coprod{}{\LHom_\dG(v,b)}_1\cup\dX\cup\dX^{-1}
      \]
      with \(d_\Omit0^\dX=d_\Omit1^\dG\), \(d_\Omit1^\dX=d_\Omit2^\dG\) and 
      vice versa for \(\dX^{-1}\) (and the structure coming from \(\LHom\) on 
      the other parts).
    \item Composition is defined again as in \(\LHom\) on the corresponding 
      parts. The composition of \(h\circ{}g\) for \(g\in\LHom_\dG(v,a)\) and 
      \(h\in\dX\) is the composition in \(\dG\) of the three elements 
      \(g,h,i\in\dG_2\), where \(i\) is the identity morphism of the object 
      \(f\) of \(\RHom_\dG(a,b)\). Similarly for the compositions 
      \(g'\circ{}h\), \(h'\circ{}g'\), \(g\circ{}h'\), \(h\circ{}h'\) and 
      \(h'\circ{}h\), for \(g'\in\LHom_\dG(v,b)\) and \(h'\in\dX^{-1}\) (in 
      each case, the two elements of \(\dG_2\) along with \(i\) form three 
      faces of a \(2\)-horn, with vertices \(a,a,b,v\) or \(a,b,b,v\), and 
      the result is the uniquely determined fourth face)
  \end{enumerate}
  It is clear, by construction, that each of the inclusions of \(\LHom(v,a)\) 
  and of \(\LHom(v,b)\) into \(\dH\) determine fully faithful functors. As in 
  the general proof, they are also essentially surjective by the Kan 
  property.
\end{example}

\begin{cor}\label{cor:comp}
  Let \(\dG\) be a \(2\)-groupoid, and let \(\gamma:D^2\ra\dG\) be a fixed 
  map, and \(a\in\dG_0\) a fixed vertex. Then
  \[\LHom(a,\gamma)=\LHom(a,\gamma\circ\Omit2)\cop_{\LHom(a,\gamma\circ{}1)}\LHom(a,\gamma\circ\Omit0)\]
  (canonical isomorphism), and
  \[\LHom(a,\gamma\circ\Omit1)=\LHom(a,\gamma\circ\Omit0)\circ_{\LHom(a,\gamma\circ{}1)}\LHom(a,\gamma\circ\Omit{2})\]
\end{cor}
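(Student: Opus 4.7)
The plan is to establish the first (pushout) statement and then derive the second. The identities $\Omit{2}\circ h=1=\Omit{0}\circ h'$, where $h,h'\colon D^0\to D^1$ send $0$ to $1$ and to $0$ respectively, give by Proposition~\ref{prp:hom}(2) a commutative square in which the maps induced by $h,h'$ are weak equivalences (in particular fully faithful), so the pushout $P:=\LHom(a,\gamma\circ\Omit{2})\cop_{\LHom(a,\gamma\circ 1)}\LHom(a,\gamma\circ\Omit{0})$ is well defined by Proposition~\ref{prp:1pushouts} and carries a canonical comparison map $\Phi\colon P\to\LHom(a,\gamma)$ induced by the maps $\LHom(a,\gamma\circ\Omit{i})\to\LHom(a,\gamma)$ for $i\in\{0,2\}$. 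I would prove that $\Phi$ is an isomorphism of groupoids.

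On objects both sides are in evident bijection with $u\in\dG_1$ satisfying $u_0=a$ and $u_1\in\{\gamma(0),\gamma(1),\gamma(2)\}$. For morphisms, stratify $(g,e)\in{\LHom(a,\gamma)}_1$ by the image of $e\colon[1]\to[2]$: when it is contained in $\{0,1\}$ (resp.\ $\{1,2\}$), $(g,e)$ lies in the image of the map from $\LHom(a,\gamma\circ\Omit{2})$ (resp.\ from $\LHom(a,\gamma\circ\Omit{0})$), so the essential case is $e=(0,2)$ (the case $e=(2,0)$ being symmetric via groupoid inversion). Given such a $g$, first fill the $\Lambda^2_1$-horn formed by $g_\Omit{2}$ and $\gamma\circ\Omit{2}$ by the Kan condition on $\dG$, obtaining $w\in\dG_2$ with $w_\Omit{2}=g_\Omit{2}$, $w_\Omit{0}=\gamma\circ\Omit{2}$ and $c:=w_\Omit{1}\colon a\to\gamma(1)$. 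A direct check via the simplicial identities shows that $(w,g,\gamma)$ fit as the $\Omit{3},\Omit{2},\Omit{0}$ faces of a $\Lambda^3_1$-horn in $\dG$; filling it yields $\sigma\in\dG_3$, and $v:=\sigma_\Omit{1}$ is then a $2$-simplex representing a morphism $c\to g_\Omit{1}$ in $\LHom(a,\gamma\circ\Omit{0})$. The $3$-simplex $\sigma$ itself, viewed as a $2$-simplex of $\LHom(a,\gamma)$ via the identity element of $(D^2)_2$, witnesses $\Phi(v\circ w)=g$, giving essential surjectivity; faithfulness follows similarly by connecting two such decompositions through a homotopy $w_0\in{\LHom(a,\gamma\circ 1)}_1$ produced by one more Kan filling in $\dG$.

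For the second statement, the map $\LHom(a,\gamma\circ\Omit{1})\to\LHom(a,\gamma)$ induced by $\Omit{1}\colon D^1\to D^2$ identifies $\LHom(a,\gamma\circ\Omit{1})$ with the full subgroupoid of $\LHom(a,\gamma)$ on objects with $u_1\in\{\gamma(0),\gamma(2)\}$, because $\Omit{1}$ picks out exactly those simplices of $D^2$ whose image lies in $\{0,2\}$. By the definition of composition in \S\ref{sss:composition}, the right-hand side of the second statement is obtained from $P$ by deleting the intermediate $\LHom(a,\gamma(1))$-objects; under the isomorphism $\Phi$ from the first statement, this coincides with exactly the same full subgroupoid, yielding the claimed equality.

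The main obstacle is the case $e=(0,2)$ of the first statement; both essential surjectivity and faithfulness reduce to $\Lambda^3_1$-fillings in $\dG$, and the real work is in verifying compatibility of boundaries via the simplicial face identities, so that the horn is valid before the Kan condition is applied.
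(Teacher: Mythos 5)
Your proposal is correct in outline and sets up the comparison map exactly as the paper does (via Proposition~\ref{prp:1pushouts} and the weak equivalences of Proposition~\ref{prp:hom}(2), noting that both sides have the same objects once one uses the glued-objects variant of the pushout). Where you diverge is in how the comparison map \(\Phi\) is shown to be an isomorphism: the paper's proof is entirely soft --- the maps \(\LHom(a,\gamma\circ\Omit2)\to P\) and \(\LHom(a,\gamma\circ\Omit2)\to\LHom(a,\gamma)\) are both weak equivalences (Proposition~\ref{prp:1pushouts}(2) and Proposition~\ref{prp:hom}(2)), so by two-out-of-three \(\Phi\) is a weak equivalence, hence fully faithful, hence an isomorphism since it is bijective on objects --- whereas you verify bijectivity on morphisms by hand, stratifying by the \(D^2\)-component \(e\) and resolving the essential case \(e=(0,2)\) by a \(\Lambda^2_1\)-filling followed by a \(\Lambda^3_1\)-filling in \(\dG\). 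I checked your horn data: the faces \(\sigma_{\Omit0}=\gamma\), \(\sigma_{\Omit2}=g\), \(\sigma_{\Omit3}=w\) are indeed compatible, the filler is even unique since \(\dG\) is a \(2\)-groupoid, and \(\pp{\sigma,(0,1,2)}\) does witness \(g=v\circ w\); so your computation is sound, and it has the merit of exhibiting explicitly the decomposition that Example~\ref{ex:2gpdmap} later uses. What it costs is bookkeeping that your sketch glosses over: injectivity requires not only the homotopy through \({\LHom(a,\gamma\circ1)}_1\) identifying two decompositions \(v\cop w\sim v'\cop w'\), but also a check on the morphism classes \(u'\cop v\cop u\) between two objects of the \(\Omit2\)-part of the pushout (which are \emph{not} literally \(\LHom(a,\gamma\circ\Omit2)\)-morphisms), and on the \(e=(2,0)\) case. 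Your treatment of the second identity (restriction to the objects over \(\gamma(0),\gamma(2)\), matched with the image of \(\hat{\Omit1}\)) agrees with the paper's.
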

In other words, the composition of two morphism groupoids (in the sense 
of~\S\ref{sss:composition}) is given by composition in the homotopy category.

\begin{proof}
  By definition, both sides have the same sets of objects.  
  Proposition~\ref{prp:1pushouts} provides the required map, and since on both 
  sides we also have a weak equivalence (by the second part of 
  Proposition~\ref{prp:1pushouts} and by Proposition~\ref{prp:hom}), this map is an 
  isomorphism. The second part again follows directly from the definition, as 
  both sides are the restriction to the same set of objects.
\end{proof}

\subsection{The theory associated with a groupoid}
We continue to fix \(n\in\NN\). To each definable \(n\)-groupoid in the 
theory \(\tT\) we define an associated expansion \(\tT_\dG\) of \(\tT\), 
directly generalising (a variant of) the one-dimensional case 
(Construction~\ref{con:cover}).

\begin{defn}\label{def:tg2}
  Let \(\dG\) be a definable \(n\)-groupoid in a theory \(\tT\). The 
  expansion \(\tT_\dG\) of \(\tT\) is obtained by adding additional sorts 
  \(\dG^*_i\) for \(0\le{}i\le{}n+1\), function symbols 
  \(e_i:\dG_i\ra\dG^*_i\), and a constant symbol \(*\in\dG^*_0\), and the 
  axioms expressing:
  \begin{enumerate}
    \item \(\dG^*\) is an \(n\)-groupoid, and \(e_*\) is a map of simplicial 
      sets (i.e., commutes with the structure maps). We identify \(\dG\) with 
      its image.
    \item \(\dG^*_0=\dG_0\cup\Set{*}\).
    \item The inclusion of \(\dG\) in \(\dG^*\) is a weak homotopy 
      equivalence (Remark~\ref{rmk:whe}), and an isomorphism onto the full 
      sub-groupoid of \(\dG^*\) spanned by \(\dG_0\).
  \end{enumerate}
  For each natural number \(r\), there is a definable family 
  \(\Gamma_r=\LHom(*,f)\) of groupoids, parametrised by the definable set of 
  maps \(f:D^r\ra\dG^*\). This is our collection \(\Gamma\) of admissible 
  groupoids, in the sense of Definition~\ref{def:2stable}.
\end{defn}

We note that our choice of \(\Gamma\) does satisfy the assumption on 
composition, by Corollary~\ref{cor:comp}.

As in the one-dimensional case, each object \(a\in\dG_0\) determines an 
interpretation \(\w_a\), over \(\tT_a\), determined by the requirement: 
\(\w_a(*)=a\), \(\w_a({\dG^*}_i)=\dG_i\) for \(i=1,2\) and similarly for the 
face maps. We would like to show that the \(\w_a\) are objects in the 
\(2\)-groupoid associated with \(\tT_\dG\) over \(\tT\), namely:

\begin{prop}\label{prp:2int}
  For every object \(a\in\dG_0\), the interpretation \(\w_a:\tT_\dG\ra\tT_a\) 
  is \(2\)-stable. In particular, \(\pp{\tT_\dG,\Gamma}\) is a \(2\)-internal 
  cover of \(\tT\).
\end{prop}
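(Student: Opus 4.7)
The plan is to reduce the statement to verifying 2-stability of $\w_a$, and to deduce 2-stability from the Yoneda-type results of Proposition~\ref{prp:hom} and its corollaries. The ``in particular'' clause follows once 2-stability is known: $\w_a$ serves as the 2-stable interpretation required by Definition~\ref{def:2intcov}, while the stable embedding of $\tT$ in $\tT_\dG$ is immediate from Definition~\ref{def:tg2}, since the added sorts $\dG^*_i$, the constant $*$, and the structure maps only constrain the original $\dG$-sorts through the canonical inclusion $\dG \hookrightarrow \dG^*$, which is the identity on $\tT$-definable sets.

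For 2-stability, fix admissible covers $\tT_i = \tT_{\LHom(*, f_i)}$ with $f_i \colon D^{r_i} \to \dG^*$; absorbing the set of parameters into $\tT$, we may assume $A = \emptyset$. The goal is to show the natural map
\[
\w_a\bigl(\Hom_{\tT_\dG}(\tT_1, \tT_2)\bigr) \longrightarrow \Hom_{\tT_a}\bigl(\w_a(\tT_1), \w_a(\tT_2)\bigr)
\]
is an equivalence of groupoids. By Proposition~\ref{prp:spans} and Proposition~\ref{prp:bimor}, both sides are described as groupoids of co-spans between the relevant $\LHom$ groupoids, modulo isomorphism of the intermediate groupoid. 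The key structural input is that, combining Corollary~\ref{cor:hom} with Corollary~\ref{cor:comp}, each such co-span is classified by a simplicial filling in the ambient $2$-groupoid: a map $g \colon D^R \to \dG^*$ (respectively $\dG$) extending the $f_i$ (respectively $\w_a(f_i)$) on prescribed faces, with the intermediate groupoid recovered as $\LHom(*, g)$ (respectively $\LHom(a, g)$). Applying $\w_a$ transports such filling data verbatim from $\dG^*$ to $\dG$, with $*$ replaced by $a$, so the comparison map is the identity on co-span data; morphisms of co-spans are controlled by Proposition~\ref{prp:bimor}.

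The main obstacle I anticipate is essential surjectivity on objects: showing that every filling $g \colon D^R \to \dG$ arising on the $\tT_a$ side lifts, up to admissible bi-interpretation, to a filling in $\dG^*$ compatible with the $f_i$. The plan is to exploit the weak equivalence $\dG \hookrightarrow \dG^*$ of Definition~\ref{def:tg2}(3), which supplies a $1$-cell from $a$ to $*$ in $\dG^*$. Concatenating $g$ with this cell produces a horn in $\dG^*$ whose filler, provided by the Kan condition in Proposition~\ref{prp:hom}(1) applied to the left fibration $\Under[\dG^*]{*} \to \dG^*$, yields the desired lift at the cost of enlarging $R$ by one dimension. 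Essential surjectivity on morphisms is handled in the same style, using Corollary~\ref{cor:hom} to transport $2$-cell data across the weak equivalence and then Proposition~\ref{prp:bimor} to package the result as an isomorphism of interpretations. The definition of $\Gamma$ in Definition~\ref{def:tg2} as the full family of $\LHom(*, f)$ groupoids is precisely what guarantees that these lifted co-spans remain admissible, closing the argument.
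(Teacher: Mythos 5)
Your toolkit is the right one (Proposition~\ref{prp:hom}, Corollary~\ref{cor:hom}, the Kan condition, a \(1\)-cell joining \(a\) to \(*\)), but there are two genuine gaps. First, the \(1\)-cell from \(a\) to \(*\) is \emph{not} supplied by the weak equivalence \(\dG\hookrightarrow\dG^*\) over the empty parameter set: that condition only guarantees, in a given model of \(\tT_\dG\), that \(*\) is connected to \emph{some} object of \(\dG_0\), which need not lie in the component of \(a\) (the paper's groupoids are explicitly not assumed connected), and even when it does, the cell is an element of the model rather than a \(\emptyset\)-definable datum. Every transport map in your argument is definable only over such a cell, so it must be fixed as a named parameter whose existence is merely consistent --- this is exactly the \(u\in{\LHom_{\dG^*}(*,a)}_0\) that the paper's proof introduces at the outset, and it is the ``set of parameters'' permitted by Definition~\ref{def:2intcov}. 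Omitting it is not cosmetic: for disconnected \(\dG\) the equivalence \(\LHom_{\dG^*}(*,f)\simeq\w_a(\LHom_{\dG^*}(*,f))\) can simply fail in a model where \(*\) lands in the wrong component. Second, your claim that every co-span ``is classified by a simplicial filling'' holds on the \(\tT_\dG\) side only by fiat (that is how \(\Gamma\) is defined), while on the \(\tT_a\) side \(\Hom_{\tT_a}(\w_a(\tT_1),\w_a(\tT_2))\) consists of arbitrary stable interpretations; the assertion that each of these arises from a filling \(D^R\ra\dG\) is essentially the \(k=1,2\) content of Theorem~\ref{thm:main} (the definable Yoneda lemma), not a consequence of Corollaries~\ref{cor:hom} and~\ref{cor:comp}. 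As written, this step either begs the main theorem or silently restricts the target to an admissible subclass that has not been defined.

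For comparison, the paper's proof is much shorter and avoids both issues by never comparing \(\Hom\)-groupoids of co-spans at all. It verifies the explicit form of the condition: over the single parameter \(u\), each admissible groupoid \(\LHom_{\dG^*}(*,f)\) is equivalent to its image under \(\w_a\). This is done by using Proposition~\ref{prp:hom}(2) to reduce to \(f\) constant at a vertex \(b\), noting that one may assume \(b\) lies in the component of \(a\) (otherwise both groupoids are empty), and then chaining \(\LHom_{\dG^*}(*,b)\simeq\LHom_{\dG^*}(*,a)\simeq\LHom_{\dG}(a,a)\) via Corollary~\ref{cor:hom} and its dual applied to \(u\). If you want to salvage your route, you must at minimum fix \(u\) as a parameter, handle the component issue, and either restrict the right-hand \(\Hom\) to maps induced by fillings or defer the classification of arbitrary co-spans to the proof of Theorem~\ref{thm:main}.
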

\begin{proof}
  We need to show that over some parameter \(u\), each \(\Gamma\)-admissible 
  groupoid \(\dH\) is equivalent to \(\w_a(\dH)\), over some parameters from 
  \(\tT\).  Let \(u\) be any element of \({\LHom_{\dG^*}(*,a)}_0\) (it is 
  consistent that such a \(u\) exists: for any model \(\Mod{M}\) of \(\tT\) 
  such that \(a\in\dG_0(\Mod{M})\), \(\Mod{M}\circ\w_a\) is a model of 
  \(\tT_{\dG}\) for which this set is non-empty).

  Let \(f:D^r\ra\dG^*\) be a map, and assume first that for some 
  \(i\in{}[r]\), \(b=f(i)\in\dG_0\). By the second item of 
  Proposition~\ref{prp:hom}, \(\dH=\LHom_{\dG^*}(*,f)\) is equivalent (over 
  no additional parameters) to \(\LHom_{\dG^*}(*,b)\), so we may assume that 
  \(f=b\). We may also assume that \(b\) is in the same connected component 
  as \(a\), because otherwise \(\dH\) is empty.  According to 
  Corollary~\ref{cor:hom}, it follows that \(\dH\) is equivalent 
  \(\LHom_{\dG^*}(*,a)\). Again according to (a dual version of) 
  Corollary~\ref{cor:hom}, the fixed element \(u\) determines an equivalence from 
  \(\dH\) to \(\w_a(\dH)=\LHom_\dG(a,a)\).

  The remaining case is when \(f\) is the constant map \(*\), so that 
  \(\dH=\LHom_{\dG^*}(*,*)\), and \(\w_a(\dH)=\LHom_\dG(a,a)\). The same 
  argument as above shows that both are equivalent to \(\LHom_{\dG^*}(*,a)\) 
  over \(u\).
\end{proof}

We would like to prove that the association \(a\mt\w_a\) is the object part 
of an assignment that recovers (up to equivalence) \(\dG\). To do that, we 
need to define the \(2\)-groupoid which is the target of this assignment.  
This will be the analog of \(I_{\tT*/\tT}\) from the one-dimensional case 
(\S\ref{sss:1igrpd}).

\begin{defn}\label{def:2catintern}
  Let \(\tT^*\) be a stable expansion of a theory \(\tT\), with admissible 
  family of distinguished internal covers \(\Gamma\).  The \(2\)-groupoid 
  associated to this datum is defined as follows:
  \begin{enumerate}
    \item Objects are \(2\)-stable interpretations of \(\pp{\tT^*,\Gamma}\) 
      in \(\tT\), over \(\tT\).
    \item If \(x,y\) are two objects as above, a morphism \(u:x\ra{}y\) is 
      given by a bi-interpretation \(u_{\tT'}:x(\tT')\ra{}y(\tT')\) over 
      \(\tT\), for each admissible internal cover \(\tT'\in\Gamma\). These 
      bi-interpretations are given with isomorphisms 
      \(c_i:u_{\tT_2}\circ{}x(i)\ra{}y(i)\circ{}u_{\tT_1}\) for every 
      admissible interpretation \(i:\tT_1\ra\tT_2\) between admissible covers 
      \(\tT_1,\tT_2\) in \(\Gamma\) (uniformly in families).

      The isomorphisms are required to satisfy: 
      \(c_{j\circ{}i}=y(j)(c_i)\circ{}c_j(x(i))\) for admissible 
      interpretations \(i:\tT_1\ra\tT_2\), \(j:\tT_2\ra\tT_3\) as above 
      (these make sense since, by definition, the \(c_i\) are definable maps 
      in \(y(\tT_2)\), \(y(j)\) is an interpretation of \(y(\tT_2)\) in 
      \(y(\tT_3)\) and \(c_j\) is a map between interpretations of 
      \(x(\tT_2\)), so can be applied to definable sets of the form 
      \(x(i)\).)
    \item The \(2\)-morphisms with edges \(u:x\ra{}y\), \(v:y\ra{}z\) and 
      \(w:x\ra{}z\) are given by isomorphisms \(v\circ{}u\ra{}w\), all over 
      \(\tT\).
    \item The ``\(2\)-composition'' of the \(2\)-morphisms 
      \(\alpha:v\circ{}u\ra{}w\), \(\beta:s\circ{}v\ra{}r\) and 
      \(\gamma:r\circ{}u\ra{}t\) is given by
      \[
      s\circ w\ra[s\cdot\alpha^{-1}]s\circ(v\circ u)=(s\circ v)\circ 
      u\ra[\beta\cdot u]r\circ u\ra[\gamma] t,
      \]
      where \(\cdot\) stands for pointwise application (or \emph{horizontal 
      composition}) as above.
  \end{enumerate}
  Applying the definition with \(\tT\) replaced by \(\tT_A\), for a 
  \(\tT\)-structure \(A\), we obtain a \(2\)-groupoid for each such structure 
  \(A\), which we denote \(I^2(A)={I^2}_{\tT^*/\tT}(A)\).
\end{defn}

Using the equivalence between internal covers and definable groupoids, this 
can be described in terms of definable groupoids. We give an explicit 
description in~\ref{sss:2gpd} below.

\begin{remark}\label{rmk:duskin}
  Let \(\cC\) be the category of definable groupoids in \(\tT\), with weak 
  equivalences as morphisms.  We may form its bi-category of \emph{co-spans} 
  for this category, as in~\Kero{0084}.  By Proposition~\ref{prp:bimor}, it 
  is equivalent (as a bi-category) to the category of internal covers and 
  bi-interpretations. The \(2\)-category in Definition~\ref{def:2catintern} 
  can be viewed as the \emph{Duskin nerve} (\Kero{009T}) of this bi-category 
  (this is clear from the description in~\Kero{00A1}). In particular, it 
  follows that this is indeed a \(2\)-category.
\end{remark}

\begin{prop}\label{prp:2gpdmap}
  Let \(\dG\) be a definable \(2\)-groupoid in a theory \(\tT\), and let 
  \(\tT_\dG=\pp{\tT_\dG,\Gamma}\) be the corresponding \(2\)-internal cover.  
  The association \(a\mt\w_a\) extends to a map 
  \(\w:\dG(A)\ra{I^2}_{\tT_\dG/\tT}(A)\) of \(2\)-groupoids, compatible with 
  extensions of the structure \(A\).
\end{prop}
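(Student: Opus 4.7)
The plan is to extend $a \mapsto \omega_a$ to 1-simplices and 2-simplices of $\dG(A)$; since $I^2_{\tT_\dG/\tT}(A)$ is by construction a 2-groupoid, this is the only additional simplicial data to supply, with higher simplices uniquely determined. The entire construction will be performed uniformly in definable parameters, so functoriality in $A$ is automatic.

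For a 1-simplex $f \in \dG_1(A)$ with $f_0 = a$, $f_1 = b$, I would exhibit a morphism $\omega_f \colon \omega_a \to \omega_b$ in $I^2(A)$ as a coherent family of bi-interpretations $(\omega_f)_{\tT'} \colon \omega_a(\tT') \to \omega_b(\tT')$ for the admissible covers $\tT' = \tT_\dH \in \Gamma$ with $\dH = \LHom_{\dG^*}(*, g)$ and $g \colon D^r \to \dG^*$. Applying $\omega_a$ and $\omega_b$ produces 1-groupoids of the form $\LHom_\dG(a, \omega_a(g))$ and $\LHom_\dG(b, \omega_b(g))$. Viewing $f$ as an edge $\gamma_f \colon D^1 \to \dG^*$ and combining it with $g$, one obtains (via Example~\ref{exa:hom2gpd} together with Proposition~\ref{prp:hom}(2)) a single 1-groupoid into which both of the above embed as weakly equivalent full subgroupoids. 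This is precisely a co-span, and by Proposition~\ref{prp:spans} corresponds to a bi-interpretation. The coherence isomorphisms $c_i$ required by Definition~\ref{def:2catintern} come from the naturality of this construction in $g$.

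For a 2-simplex $h \in \dG_2(A)$ with faces $u = h_\Omit{2}$, $v = h_\Omit{0}$, $w = h_\Omit{1}$, I would build the required 2-morphism $\omega_h \colon \omega_v \circ \omega_u \to \omega_w$ as follows. By Corollary~\ref{cor:comp}, on each admissible $\tT'$ the composition $\omega_v \circ \omega_u$ is realised as the pushout of the co-spans associated to $u$ and $v$, and the 2-simplex $h$ produces, via Corollary~\ref{cor:hom}(2), a canonical isomorphism between this pushout and the co-span associated to $w$.

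The main obstacle is verifying the coherence axioms of Definition~\ref{def:2catintern}: the cocycle identity $c_{j \circ i} = y(j)(c_i) \circ c_j(x(i))$ on 1-morphisms, and the 2-composition rule for compatible triples of 2-simplices. After unpacking through Corollaries~\ref{cor:hom} and~\ref{cor:comp}, each reduces to the uniqueness of fillings of inner 3- and 4-dimensional horns in the 2-groupoid $\dG$ — that is, to the third axiom of Definition~\ref{def:ncath} in the case $n = 2$. Granted these, the compatibility with simplicial face and degeneracy maps is a direct check on the construction, and functoriality in extensions of $A$ is automatic from the definable uniformity of the whole procedure.
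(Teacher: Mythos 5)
Your proposal follows essentially the same route as the paper: edges $f$ act via the co-span $\LHom(\,\cdot\,,\gamma_f)$ from Example~\ref{exa:hom2gpd}/Corollary~\ref{cor:hom}, $2$-simplices act via $\LHom(\,\cdot\,,\gamma)$ for $\gamma:D^2\ra\dG$ together with Corollary~\ref{cor:comp} and restriction to the long edge, and coherence is pushed to uniqueness of fillings in dimensions $3$ and $4$. The only quibble is the citation ``Corollary~\ref{cor:hom}(2)'' for the $2$-simplex step, which has no such part; the isomorphism you need there is really the identification in Corollary~\ref{cor:comp}.
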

\begin{proof}
  Proposition~\ref{prp:2int} shows that for all \(a\in\dG_0(A)\), \(\w_a\) is  
  indeed an object of \(I^2(A)\). Given \(t:a\ra{}b\) in \(\dG_1(A)\), we 
  define \(\w_t:\w_a\ra\w_b\) as follows:
  
  Let \(\dH\) be an admissible groupoid. As in the proof of~\ref{prp:2int}, 
  we may assume that \(\dH=\dH_c=\LHom_{\dG^*}(*,c)\) for some 
  \(c\in\dG_0(A)\), so that \(\w_a(\dH)=\LHom_{\dG}(a,c)\) and 
  \(\w_b(\dH)=\LHom_{\dG}(b,c)\). By Corollary~\ref{cor:hom}, \(t\) induces 
  an (admissible) equivalence from \(\w_a(\dH)\) to \(\w_b(\dH)\), which we 
  take to be \(\w_t(\dH)\). Our definition (and construction) ensures the 
  compatibility under admissible maps \(\dH\ra\dH'\).

  Similarly, let \(\alpha\in\dG_2(A)\), with edges \(r:a\ra{}b\), 
  \(s:b\ra{}c\) and \(t:a\ra{}c\). We need to construct an isomorphism (over 
  \(\tT\)) from \(\w_s\circ\w_r\) to \(\w_t\). Consider the map 
  \(f:D^2\ra\dG\) determined by \(\alpha\). We have \(f(01)=r\), \(f(12)=s\) 
  and \(f(02)=t\), so that for each object \(d\in\dG_0(A)\), the equivalence 
  \(\w_r(\dH_d):\w_a(\dH_d)\ra\w_b(\dH_d)\) is given by 
  \(\LHom(f\circ{}h_{01},d)\), where \(h_{01}:[1]\ra{}[2]\) is the inclusion 
  (and similarly for \(s,t\)). Hence, \(\LHom(f,d)\) represents the 
  composition \(\w_s\circ\w_r\), and restriction to \(\w_t\) provides the 
  required map.

  This completes the construction of \(\w\). The proof that this is a map of 
  \(2\)-groupoids (i.e., that it commutes with composition) is similar to the 
  above, using \(D^4\) in place of \(D^3\), and the fact that it commutes 
  with extension of scalars is obvious.
\end{proof}

\subsubsection{}\label{sss:2gpd}
Our main goal is to prove that the map \(\w\) constructed in 
Proposition~\ref{prp:2gpdmap} is a weak equivalence. Similarly to the 
\(1\)-dimensional case, it will generally only be true in a model. As a 
preparation, we consider more explicitly the structure of \(I^2\) from 
Definition~\ref{def:2catintern}, from a definable groupoid point of view.

Let \(\w_1\) and \(\w_2\) be two objects of \(I^2_{\tT^*/\tT}\), i.e., 
\(2\)-stable interpretations of \(\tT^*\) in \(\tT\). An isomorphism from 
\(\w_1\) to \(\w_2\) over \(\tT\) is given, according to 
Proposition~\ref{prp:spans}, by a family \(\dK(\dH)\) of groupoids in 
\(\tT\), for each admissible groupoid \(\dH\) in \(\tT^*\), along with weak 
equivalences \(u_i(\dH):\w_i(\dH)\weq\dK(\dH)\), all definable uniformly in 
\(\dH\). Given another admissible groupoid \(\dH'\), an admissible 
interpretation from \({\tT^*}_{\dH'}\) to \({\tT^*}_\dH\) is given, again by 
Proposition~\ref{prp:spans}, by an admissible groupoid \(\dX\) and admissible 
maps \(f:\dH\ra\dX\) and \(g:\dH'\weq\dX\).

According to Definition~\ref{def:2catintern}, we are provided with definable 
isomorphisms (realising the isomorphisms \(c_i\) there, via 
Proposition~\ref{prp:bimor})
\begin{equation}\label{eq:isostruct}
t_{\dX,\dK}:\dK(\dH)\circ_{\w_1(\dH)}\w_1(\dX)\iso
\w_2(\dX)\circ_{\w_2(\dH')}\dK(\dH')
\end{equation}
uniformly definable in \(\dX,\dK\) (and the associated embeddings), and 
restricting to the identity on \(\w_1(\dH')\) and on \(\w_2(\dH)\). The 
situation is depicted in the top part of the following diagram:
\begin{equation*}
  \begin{tikzcd}[column sep=tiny]
    &[-40pt] \w_1(\dH)\ar[rr, "u_1(\dH)", "\sim"', dashrightarrow]\ar[ldd, 
    "\w_1(f)"'] &[-10pt] & \dK(\dH) & &[-25pt]
    \w_2(\dH)\ar[ll, "u_2(\dH)"']\ar[rdd, "\w_2(f)"]
    \ar[llld, bend right=5]\ar[lddd, bend left=10]&[-40pt]\\
    & & \dK(\dH)\circ_{\w_1(\dH)}\w_1(\dX)\ar[rrdd,"t_{\dX,\dK}","\sim"] & & 
    & &\\
    \w_1(\dX) & & & & & & \w_2(\dX)\\
    & & & & \w_2(\dX)\circ_{\w_2(\dH')}\dK(\dH') &\\
    & \w_1(\dH')\ar[rr, "u_1(\dH')"', "\sim", dashrightarrow]
    \ar[ldd]
    \ar[luu, "\w_1(g)", "\sim"', dashrightarrow]
    \ar[ruuu,"\sim",dashrightarrow, bend left=10]
    \ar[rrru,"\sim",dashrightarrow, bend right=5]& & \dK(\dH') & &
    \w_2(\dH')
    \ar[rdd]
    \ar[llld, bend right=5]\ar[lddd, bend left=10]
    \ar[ll, "u_2(\dH')"']\ar[ruu, "\w_2(g)"', "\sim", dashrightarrow] &\\
    & & \dK(\dH')\circ_{\w_1(\dH')}\w_1(\dY)\ar[rrdd,"t_{\dY,\dK}","\sim"] & 
    & & &\\
    \w_1(\dY) & & & & & & \w_2(\dY)\\
    & & & & \w_2(\dY)\circ_{\w_2(\dH'')}\dK(\dH'') &\\
    & \w_1(\dH'')\ar[rr, "u_1(\dH'')"', "\sim", dashrightarrow]
    \ar[luu, "\sim"', dashrightarrow]
    \ar[ruuu,"\sim",dashrightarrow, bend left=10]
    \ar[rrru,"\sim",dashrightarrow, bend right=5]& & \dK(\dH'') & &
    \w_2(\dH'')\ar[ll, "u_2(\dH'')"]\ar[ruu, "\sim", dashrightarrow] &
  \end{tikzcd}
\end{equation*}

If \(\dY\) determines a map to \({\tT^*}_{\dH'}\) from \({\tT^*}_{\dH''}\) 
for a further groupoid \(\dH''\), we have the maps
\begin{multline}
  t_{\dX,\dK}\cop_{\w_1(\dH')}\1_{\w_1(\dY)}:
  \dK(\dH)\circ_{\w_1(\dH)}\w_1(\dX)\circ_{\w_1(\dH')}\w_1(\dY)\iso\\
  \w_2(\dX)\circ_{\w_2(\dH')}\dK(\dH')\circ_{\w_1(\dH')}\w_1(\dY)
\end{multline}
and
\begin{multline}
  \1_{\w_2(\dX)}\cop_{\w_2(\dH')}t_{\dY,\dK}:
  \w_2(\dX)\circ_{\w_2(\dH')}\dK(\dH')\circ_{\w_1(\dH')}\w_1(\dY)\iso\\
  \w_2(\dX)\circ_{\w_2(\dH')}\w_2(\dY)\circ_{\w_2(\dH'')}\dK(\dH'')
\end{multline}
the groupoid \(\dX\circ_{\dH'}\dY\) represents the composition of 
interpretations, and 
\(\w_i(\dX\circ_{\dH'}\dY)=\w_i(\dX)\circ_{\w_i(\dH')}\w_i(\dY)\) (canonical 
identification), since \(\w_i\) is an interpretation. Under this 
identification, we require that
\begin{equation}\label{eq:compat}
(\1_{\w_2(\dX)}\cop_{\w_2(\dH')}t_{\dY,\dK})\circ
(t_{\dX,\dK}\cop_{\w_1(\dH')}\1_{\w_1(\dY)})=t_{(\dX\circ_{\dH'}\dY),\dK}.
\end{equation}

Finally, a \(2\)-morphism is determined by a natural isomorphism between two 
maps as above (one a composition, which we already understand), so it is 
enough to describe those. Let \(\w_1\), \(\w_2\) and \(\dK\) be as above, and 
let \(\dL\) represent another morphism. A natural isomorphism is then given 
by a uniform family of isomorphisms \(\alpha_\dH:\dK(\dH)\ra\dL(\dH)\) over 
\(\w_i(\dH)\), which intertwine the maps \(t_{\dX,\dK}\) and \(t_{\dX,\dL}\) 
whenever \(\dX\) represents an interpretation. The \(2\)-composition of three 
such suitable maps is described as in Definition~\ref{def:2catintern}, with 
composition replaced by pushouts as appropriate.

\begin{remark}\label{rmk:2gpdmap}
  By definition, internality means that there is a non-empty definable 
  \emph{set} (i.e., a \(0\)-groupoid) of isomorphisms between the internal 
  sorts and sorts of the base theory.  Similarly, the structure described 
  above includes the description of a \(\tT^*\)-definable \(1\)-groupoid 
  \(\dIso_\tT(\ti{\tT^*},\ti{\tT})\) of weak equivalences between admissible 
  covers \(\ti{\tT^*}\) of \(\tT^*\) and covers \(\ti{\tT}\) of \(\tT\) 
  (non-empty for some \(\ti{\tT}\) if \(\tT^*\) is \(2\)-internal). In terms 
  of groupoids, the families \(\dK\) as in~\S\ref{sss:2gpd} are the objects, 
  and the morphisms are the natural isomorphisms \(\alpha\).  Furthermore, 
  this groupoid itself is admissible.
\end{remark}

\begin{example}\label{ex:2gpdmap}
  Let \(\tT^*=\tT_\dG\) as in Proposition~\ref{prp:2gpdmap}, and let 
  \(\w_1=\w_a\) and \(\w_2=\w_b\) for some \(a,b\in\dG_0(A)\). Let 
  \(f:a\ra{}b\) be a morphism in \(\dG(A)\) (identified with the 
  corresponding map from \(D^1\)).  Given an admissible groupoid 
  \(\dH=\dH_d=\LHom(d,*)\), we let \(\dK_f(d)=\dK_f(\dH_d)=\LHom_\dG(d,f)\), 
  with the canonical maps from \(\w_a(\dH_d)=\LHom(d,a)\) and 
  \(\w_b(\dH_d)=\LHom(d,b)\) (these are weak equivalences by 
  Proposition~\ref{prp:hom}).
  
  To give \(\dK\) the structure of an isomorphism from \(\w_a\) to \(\w_b\), 
  we need to supply the isomorphisms~\eqref{eq:isostruct}. If 
  \(\dH'=\dH_c=\LHom(c,*)\) is another admissible groupoid in \(\tT_\dG\), an 
  admissible isomorphism from \(\dH_c\) to \(\dH_d\) is given by a groupoid 
  \(\dX_g=\LHom(g,*)\), with \(g:c\ra{}d\) in \(\dG\). Seeing as 
  \(\w_x(\dX_g)=\LHom(g,x)\) for all \(x\), such a structure consists of a 
  definable family of maps
  \begin{multline*}
    t_{g,f}:\LHom(d,f)\circ_{\LHom(d,a)}\LHom(g,a)\iso\\
    \LHom(g,b)\circ_{\LHom(c,b)}\LHom(c,f)
  \end{multline*}
  Recall that \(t\) is the identity on objects, so we only need to define it 
  on morphisms.  Let \(\pp{u,v}\) represent a morphism of 
  \(\LHom(d,f)\circ_{\LHom(d,a)}\LHom(g,a)\). Let \(h:c\ra{}a\) be the domain 
  of \(v\). By the Kan property, there is a morphism \(w\in\LHom(c,f)\) whose 
  domain is \(h\). Then \(u,v,w\) form an element of \(\Horn{3}{3}\), so 
  composition provides a fourth face \(y\in{\LHom(g,b)}_1\).  We let 
  \(t_{g,f}(u\cop{}v)=y\cop{}w\).  If \(w'\) is a different choice in place 
  of \(w\), then \(w'\circ{}w^{-1}\) is in \(\LHom(c,b)\), so the result 
  represents the same morphism of \(\LHom(g,b)\circ_{\LHom(c,b)}\LHom(c,f)\).  
  It is clear that \(t\) is well defined on the class \(u\cop{}v\), and 
  uniformly definable in \(g,f\).

  To prove the identity~\eqref{eq:compat}, assume we are given another 
  morphism \(g':c'\ra{}c\), corresponding to an admissible interpretation 
  represented by \(\dY=\LHom(g',*)\). Let \(v'\in\LHom(g',a)=\w_1(\dY)\).  
  Proceeding with the notation above, we need to determine the image of 
  \(y\cop{}w\cop{}v'\) in 
  \(\LHom(g,b)\circ_{\LHom(c,b)}\LHom(g',b)\circ_{\LHom(c',b)}\LHom(c',f)\).  
  As above, it is given by \(y\cop{}y'\cop{}w'\), where 
  \(y'\in\LHom(g',b)=\w_2(\dY)\) and \(w'\in\LHom(c',f)=\dK_f(\dH_{c'})\) 
  represent the other two faces of a partial simplex with faces \(w\) and 
  \(v'\) (this other simplex can be visualised as attached to the previous 
  one at the face \(w\)). On the other hand, \(\dX\circ_{\dH_c}\dY\) was 
  identified (as in Corollary~\ref{cor:comp}) with \(\LHom(g\circ{}g',*)\), for 
  any composition \(g\circ{}g'\). After choosing such a composition \(h\), 
  \(v\cop{}v'\) is identified with an element of \(\LHom(h,a)\) and 
  \(y\cop{}y'\) with an element of \(\LHom(h,b)\), so that they become two 
  faces of the simplex with vertices \(a,b,c,d\), the other two being \(u\) 
  and \(w'\), so that \(u\cop{}w'=t_{h,f}(v\cop{}v',y\cop{}y')\), as 
  required.

  Assume now that we are given a map \(\gamma:D^2\ra\dG\) corresponding to an 
  element \(w\in\dG_2\), with edges \(f=\gamma(01)\), \(g=\gamma(12)\) and 
  \(h=\gamma(02)\).  Given element \(u\in\dK_f(\dH_c)=\LHom(c,f)\) and 
  \(v\in\dK_g(\dH_c)=\LHom(c,g)\) (for an arbitrary \(c\in\dG_0\)), the 
  \(2\)-composition applied to \(u,v\) and \(w\) provides an element of 
  \(\dK_h(\dH_c)=\LHom(c,h)\). This process assembles into a family of 
  isomorphisms 
  \(\alpha_\gamma:\dK_g(\dH_c)\circ\dK_f(\dH_c)\ra\dK_h(\dH_c)\), definable 
  uniformly in \(\gamma\) and \(c\). This completes the description (and a 
  reformulation of the proof) of the map in Proposition~\ref{prp:2gpdmap} in terms 
  of definable groupoids.
\end{example}

We are now ready to prove our main result.

\begin{theorem}\label{thm:main}
  Let \(\dG\) be a \(2\)-groupoid defined in a theory \(\tT\), and let 
  \(\tT_\dG\) be the associated theory (and admissible covers), as in 
  Definition~\ref{def:tg2}.  Then \(\tT_\dG\) is a \(2\)-internal cover of \(\tT\), 
  and for every model \(\mM\) of \(\tT\), the \(2\)-groupoid of 
  \(\mM\)-points \(I^2_{\tT_\dG/\tT}(M)\) is weakly equivalent to \(\dG(M)\).
\end{theorem}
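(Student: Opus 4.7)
The first assertion, that $\tT_\dG$ is a $2$-internal cover of $\tT$, is immediate from Proposition~\ref{prp:2int}, which already exhibited the required $2$-stable interpretations $\w_a$ for each $a\in\dG_0$. The content is therefore the claim that the map $\w\colon\dG(\mM)\to I^2_{\tT_\dG/\tT}(\mM)$ constructed in Proposition~\ref{prp:2gpdmap} is a weak equivalence of $2$-groupoids in the sense of Definition~\ref{def:whe}. I would verify this via the characterisation in Remark~\ref{rmk:whe} at each of the three relevant levels $k=0,1,2$, using the explicit description of $I^2$ from~\S\ref{sss:2gpd} and Example~\ref{ex:2gpdmap}.

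The essential-surjectivity step ($k=0$) is the substantial one. Given an object $\xi\in I^2(\mM)$, i.e.\ a $2$-stable interpretation $\tT_\dG\to\tT_{\mM}$ over $\tT$, the composite $\mM\circ\xi$ is a model of $\tT_\dG$, and the third axiom of Definition~\ref{def:tg2} (that $\dG\hookrightarrow\dG^*$ is a weak homotopy equivalence) forces some $a\in\dG_0(\mM)$ together with an $\mM$-point $u$ of the admissible cover $\xi(\LHom_{\dG^*}(a,*))$. Viewing $u$ as a $1$-morphism inside the ambient $2$-groupoid $\xi(\dG^*)$ and applying Corollary~\ref{cor:hom} uniformly in admissible covers $\dH_d=\LHom_{\dG^*}(d,*)$, one obtains a compatible family of weak equivalences $\w_a(\dH_d)=\LHom_\dG(d,a)\weq\xi(\dH_d)$; by the composition identities of Corollary~\ref{cor:comp} this assembles into the coherence data~\eqref{eq:isostruct}--\eqref{eq:compat} of an isomorphism $\w_a\iso\xi$ in $I^2(\mM)$.

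Surjectivity on $\pi_1$ and $\pi_2$ at the basepoint $\w_a$ proceeds in the same spirit. Given an automorphism $\dK\colon\w_a\to\w_a$ in $I^2(\mM)$, its value on the admissible cover $\LHom_{\dG^*}(a,*)$ is definably weakly equivalent, on both sides, to $\w_a(\LHom_{\dG^*}(a,*))=\LHom_\dG(a,a)$; since $\mM$ is a model, $\dK(\LHom_{\dG^*}(a,*))$ has an $\mM$-point, which transports to an automorphism $t\in\dG_1(\mM)$ of $a$, and direct comparison with the construction in Example~\ref{ex:2gpdmap} shows that $\w_t$ is $2$-homotopic to $\dK$. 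The $\pi_2$ case is analogous with $\dG_2(\mM)$ in place of $\dG_1(\mM)$, using a suitable hom-groupoid of $2$-cells. Injectivity at each level follows the same template dually: two parallel $k$-cells of $I^2(\mM)$ become equivalent precisely when a definable ``groupoid of witnesses'' in $\dG$ admits an $\mM$-point, which by Proposition~\ref{prp:hom} coincides with $k$-homotopy in $\dG(\mM)$.

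The hardest step I expect is the coherence check in $\pi_0$-surjectivity, namely verifying~\eqref{eq:compat} for the constructed family across \emph{every} admissible interpretation in $\Gamma_1$, not merely object-wise. This amounts to a higher-dimensional Yoneda-type diagram chase inside the $2$-groupoid $\xi(\dG^*)$, and is precisely the place where the contractibility of the discs $D^l$ encoded in the second part of Proposition~\ref{prp:hom} is used uniformly in $f$ to produce the compatibility across all admissible covers at once.
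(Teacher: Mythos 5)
Your overall strategy coincides with the paper's: the first assertion is quoted from Proposition~\ref{prp:2int}, and the weak equivalence is checked level by level (\(k=0,1,2\)) against Remark~\ref{rmk:whe}, using the groupoid-theoretic description of \(I^2\) from \S\ref{sss:2gpd} and Example~\ref{ex:2gpdmap}. Your \(k=0\) step is in substance the paper's, which simply reruns the proof of Proposition~\ref{prp:2int} with \(\w(\dG^*)\) in place of \(\dG^*\): since \(\w\) is an interpretation over \(\tT\), the set \(\w(\dG^*)\) is again a definable \(2\)-groupoid containing \(\dG\) with the inclusion a weak equivalence, so over a model one finds a point joining \(\w(*)\) to some \(a\in\dG_0\) and feeds it to Corollary~\ref{cor:hom}.

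The gap is in your \(k=1\) step, which is where the paper locates the real content (a definable Yoneda lemma). You propose to take \emph{an} \(\mM\)-point of \(\dK(\dH_a)\) and ``transport'' it to an automorphism \(t\) of \(a\). An arbitrary point of \(\dK(\dH_a)\) transports to an arbitrary \(t\in\dG_1\), and for such \(t\) one does \emph{not} get \(\w_t\cong\dK\) (two morphisms \(t,s\) give isomorphic \(\dK_t,\dK_s\) only when they are joined by a \(2\)-cell). The correct move, and the one the paper makes, is to evaluate at the identity: \(\1_a\) is a distinguished object of \(\w_a(\dH_a)=\LHom_\dG(a,a)\); by essential surjectivity of \(\w_b(\dH_a)\ra\dK(a)\) its image in \(\dK(a)\) is isomorphic to the image of some \(f\in{\LHom_\dG(a,b)}_0\), and the chosen connecting morphism \(u:\1_a\ra f\) in \(\dK(a)\) is then pushed through the coherence maps \(t_{g,\dK}\) of~\eqref{eq:isostruct} to produce an isomorphism \(\dK\cong\dK_f\), which is moreover \emph{unique}. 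That uniqueness is not decorative: the paper reuses it verbatim to get injectivity at \(k=2\), and the general \(a\ne b\) form of the statement is what gives injectivity at \(k=0\); in your sketch all injectivity claims are deferred to a one-sentence ``groupoid of witnesses'' template with no argument behind it. Relatedly, your estimate that the coherence check in \(\pi_0\)-surjectivity is the hardest step is misplaced --- that check is absorbed by Corollary~\ref{cor:comp} and the proof of Proposition~\ref{prp:2int} --- whereas the evaluation-at-the-identity argument above is the load-bearing one.
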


\begin{proof}
  The fact that \(\tT_\dG\) is a \(2\)-internal cover is 
  Proposition~\ref{prp:2int}. The map from \(\dG\) to \(I^2_{\tT_\dG/\tT}\) was 
  constructed in Proposition~\ref{prp:2gpdmap}, and described in terms of groupoids 
  in Example~\ref{ex:2gpdmap}. We will use this description to show that the map 
  is a weak equivalence, using Remark~\ref{rmk:whe}. We assume that we are 
  working over \(\mM\), and proceed by considering the possible dimensions 
  \(0\le{}k\le{}2\).
  \begin{description}
    \item[\(k=0\)] We need to show that any \(2\)-stable interpretation 
      \(\w\) of \(\tT_\dG\) in \(\tT\) admits a coherent collection of 
      bi-interpretations as in Definition~\ref{def:2catintern}(2) 
      to some \(\w_a\).

      Since \(\w\) is an interpretation over \(\tT\), \(\w(\dG^*)\) is a 
      definable \(2\)-groupoid in \(\tT\), containing \(\dG\), with the 
      inclusion a weak equivalence. The proof now proceeds exactly as the 
      proof of Proposition~\ref{prp:2int}, with \(\w(\dG^*)\) in place of 
      \(\dG^*\).
    \item[\(k=1\)] This is the main case, which can be viewed as a definable 
      version of the Yoneda lemma. Let \(a,b\in\dG_0\), and assume we are 
      given an equivalence from \(\w_a\) to \(\w_b\).  Hence, for every 
      \(c\in\dG_0\) (some parameters), we are given a groupoid \(\dK(c)\) in 
      \(\tT\) and weak equivalences \(\LHom(c,a)=\w_a(\dH_c)\ra\dK(c)\) and 
      \(\LHom(c,b)=\w_b(\dH_c)\ra\dK_c\), uniformly in \(c\), along with 
      structure maps~\eqref{eq:isostruct}
      \[
      t_{g,\dK}:\dK(d)\circ_{\LHom(d,a)}\LHom(g,a)\ra
      \LHom(g,b)\circ_{\LHom(c,b)}\dK(c)
      \]
      (all notation as in Example~\ref{ex:2gpdmap}, except \(\dK\) is no longer 
      known to be of the given form). We identify \(\w_a(\dH_c),\w_b(\dH_c)\) 
      with their images in \(\dK(c)\).
      
      In particular, we have the identity morphism \(\1_a\) of \(a\) as an 
      object \(\1_a\in\w_a(\dH_a)\), and by weak equivalence, an object 
      \(f:a\ra{}b\) in \(\w_b(\dH_a)\subseteq{}\dK(a)\), along with a 
      morphism \(u:\1_a\ra{}f\) in \(\dK(a)\). We will show that \(\dK\) is 
      isomorphic to \(\dK_f\), by a unique isomorphism.

      To do that, let \(c\in\dG_0\) be an arbitrary object, and let \(v\) be 
      a morphism of \(\dK_f(c)=\LHom(c,f)\) (so a \(2\)-morphism of \(\dG\)).  
      Denote by \(g\in{\LHom(c,a)}_0\) the domain of \(v\). Then \(v\) can 
      also be viewed as a morphism in \(\LHom(g,b)\), and on the other hand, 
      we have the canonical morphism \(w\) from \(g\) to \(\1_a\) in 
      \(\LHom(g,a)\).  Applying \(t_{g,\dK}\) to the morphism 
      \(u\cop{}w\in\dK(a)\circ_{\LHom(a,a)}\LHom(g,a)\), we may write 
      \(t_{g,\dK}(u\cop{}w)\) as \(v\cop{}x\) for a unique \(x\in\dK(c)\), 
      which we take to be the image of \(v\). By construction this map 
      commutes with the structure maps \(t\), and is unique with this 
      property.
    \item[\(k=2\)] We need to show that each isomorphism 
      \(\alpha:\dK_g\circ\dK_f\ra\dK_h\) with \(f:a\ra{}b\), \(g:b\ra{}c\) 
      and  \(h:a\ra{}c\) arises from a unique \(\gamma:D^2\ra\dG\), with 
      boundary \(f,g,h\) (as in the end of Example~\ref{ex:2gpdmap}).  Uniqueness 
      was already shown in the part \(k=1\). For existence, we apply 
      \(\alpha_b:\dK_g(b)\circ\dK_f(b)\ra\dK_h(b)\) to the element 
      \(\1_g\cop\1_f\) (where \(\1_g\) is the identity morphism of the object 
      \(g\) of \(\LHom(b,c)\), viewed as an element of \(\dG_2\), and 
      similarly for \(f\)), to obtain an element \(\gamma_b\in{\dK_h(b)}_1\), 
      again viewed as a \(2\)-morphism of \(\dG\). It is clear that the map 
      \(\alpha\) coincides with \(\alpha_\gamma\) on the given maps, and 
      then, again by the uniqueness statement, that \(\alpha=\alpha_\gamma\) 
      globally.\qedhere
  \end{description}
\end{proof}

\subsubsection{Recovering a definable \(2\)-groupoid}
The main statement of classical internality starts with the assumption of 
internality, and produces a definable (non-empty) groupoid from it. The 
general outline of this construction was recalled in~\S\ref{sss:1grpd}, and 
in Proposition~\ref{prp:gsets} we indicated how this construction is useful 
in the description of definable sets in the cover.

In our approach, the construction of the (\(2\)-)groupoid is 
almost tautological: We defined a groupoid (or a \(2\)-groupoid) associated 
to every stable expansion, and by definition, the expansion is an internal 
cover if the groupoid is non-empty. However, we still need to show that the 
groupoid is equivalent to a definable one, which we sketch below. The other 
part, describing the (admissible) \(1\)-groupoids in the cover in terms of 
suitable definable fibrations in the base, is more involved, and we postpone 
most of the work here to future work.

\begin{prop}
  Let \(\tT^*\) be a \(2\)-internal cover of \(\tT\). Then the \(2\)-groupoid 
  \(I^2_{\tT^*/\tT}\) associated to it is equivalent to a \(\tT\)-definable 
  one.
\end{prop}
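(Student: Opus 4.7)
The plan is to parallel Corollary~\ref{cor:1covistg} one dimension up, using Theorem~\ref{thm:main} as a black box to bypass the direct construction of the equivalence. Concretely, I would construct a $\tT$-definable $2$-groupoid $\dG$ such that $\tT^*$ is bi-interpretable (in the $2$-stable sense, and respecting the admissible families $\Gamma$) with $\tT_\dG$ over $\tT$; then Theorem~\ref{thm:main} identifies $I^2_{\tT_\dG/\tT}$ with $\dG$ on models, and this equivalence transports along the bi-interpretation to yield $I^2_{\tT^*/\tT}\simeq\dG$.

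The construction of $\dG$ follows the template of Construction~\ref{con:groupoid}. Using the $2$-stable interpretation $p:\tT^*\to\tT_A$ provided by Definition~\ref{def:2intcov}, the set $\dG_0$ is given by the types over $\tT$ of the parameter tuples in $A$ that realise the data of $p$ (together with the distinguished vertex $*$ corresponding to the identity interpretation of $\tT^*$, exactly as in Construction~\ref{con:cover}). The edges in $\dG_1$ arise from $2$-types of pairs of such parameters, with their internal $1$-groupoid structure coming from the definable groupoid of weak equivalences $\dIso_\tT(-,-)$ of Remark~\ref{rmk:2gpdmap}, which by $2$-stability is ind-definable in $\tT$. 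The $2$-cells $\dG_2$ are encoded by the natural isomorphisms $\alpha$ described in~\S\ref{sss:2gpd}, again pulled down to $\tT$ via $p$; the composition law is forced by the uniqueness of composites at the top dimension. Stable embeddedness and $2$-stability ensure that all the simplicial identities and the Kan conditions of Definition~\ref{def:ncath} hold definably.

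The main obstacle is verifying that this $\dG$ is bi-interpretable with $\tT^*$ in the sense required for Theorem~\ref{thm:main}: one must match, under $p$, the admissible family $\Gamma$ on $\tT^*$ with the family $\LHom(*,f)$ on $\tT_\dG$, and match the coherence data of Definition~\ref{def:2catintern} (the isomorphisms $t_{\dX,\dK}$ and the cocycle identity \eqref{eq:compat}) on both sides. This is bookkeeping once the definability of the pieces is in place, but needs a careful induction across the dimensions $k=0,1,2$, analogous to the case analysis in the proof of Theorem~\ref{thm:main}. A cleaner route, which I would pursue if the direct construction becomes unwieldy, is the Duskin-nerve point of view of Remark~\ref{rmk:duskin}: define $\dG$ directly as the sub-$2$-groupoid of $I^2_{\tT^*/\tT}$ cut out by the parameter types of $p$, verify fully faithfulness by stable embeddedness of $\tT$ in $\tT^*$, and verify essential surjectivity on models by lifting a general $2$-stable interpretation $i$ to a parameter realising it via $p$, exactly as in the second half of the proof of Proposition~\ref{prp:eqgrpd}.
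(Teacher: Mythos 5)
Your primary route has a genuine gap: it reduces the Proposition to showing that \(\tT^*\) is bi-interpretable with \(\tT_\dG\) over \(\tT\), in a way that matches the admissible family \(\Gamma\) on \(\tT^*\) with the family \(\LHom(*,f)\) of Definition~\ref{def:tg2}, and then invokes Theorem~\ref{thm:main}. But that reduction is strictly stronger than the statement being proved, and it is precisely what the paper declines to establish: the subsection on questions explicitly records that ``it still needs to be seen that \(\tT_\dG\) and \(\tT^*\) are, in some sense, equivalent,'' and that describing the admissible covers of \(\tT^*\) as higher local systems on \(\dG\) ``is more involved'' and is postponed. Calling the matching of \(\Gamma\) with \(\LHom(*,f)\) and of the coherence data~\eqref{eq:isostruct},~\eqref{eq:compat} ``bookkeeping'' assumes away the hard part; nothing in the given data of a \(2\)-internal cover tells you that an arbitrary admissible family, closed under the composition of~\S\ref{sss:2gpd}, is exhausted (even up to equivalence) by the morphism groupoids of a single definable \(2\)-groupoid. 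The paper instead argues directly, without passing through \(\tT_\dG\) or Theorem~\ref{thm:main} at all.

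Your fallback route (cut a sub-\(2\)-groupoid out of \(I^2_{\tT^*/\tT}\) by the parameter types of \(p\), then argue as in Proposition~\ref{prp:eqgrpd}) is much closer in spirit to what the paper does, but it is silent on the one point that actually requires work: why the resulting \(1\)- and \(2\)-morphism sets are \emph{definable in \(\tT\)}. A morphism of \(I^2\) between two such interpretations is a whole coherent family of bi-interpretations \(\dK(\dH)\) with structure maps \(t_{\dX,\dK}\), i.e.\ an object of an ind-definable \emph{groupoid}, not an element of a set. The paper resolves this by fixing, for each pair of parameters \(c,d\) and objects \(u,v\) of the groupoids \(\dP_c,\dP_d\) (the \(2\)-dimensional analogue of the realisations in Construction~\ref{con:groupoid}), the set \(\dQ_{u,v}\) of morphisms in the bi-torsor groupoid \(\dK_u\) with domain \(f(v)\), and taking the \(1\)-cells of \(\dG\) to be the \emph{definable type space} of \(\dQ_{u,v}\) over \(\tT\) (definable by stable embeddedness), with \(2\)-cells obtained analogously from types of triples \(\dQ_{u,v}\x\dQ_{v,w}\x\dQ_{u,w}\). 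Without this step, or something equivalent, you have not produced a \(\tT\)-definable \(2\)-groupoid; with it, your fallback essentially becomes the paper's argument.
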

\begin{proof}[Proof sketch]
Assume \(\tT^*\) is a \(2\)-internal cover of \(\tT\), and let 
\(\w:\tT^*\ra\tT_A\) be a \(2\)-stable interpretation. For simplicity we 
assume that \(\Gamma\), the collection of admissible covers, consists of one 
definable family. As in~\S\ref{sss:2gpd}, we have a fixed parameter \(u_0\) 
and a uniform family \(\dK=\dK_c\) of groupoids in \(\tT^*\) defined over 
\(u_0\), along with (uniformly definable) weak equivalences 
\(f_c:\dX_c\weq\dK_c\) and \(g_c:\w(\dX_c)\ra\dK_c\) for \(\dX_c\) members of 
\(\Gamma\) (note that \(c\) ranges over a definable set in \(\tT\) by 
assumption). Like in Remark~\ref{rmk:2gpdmap}, as \(u_0\) varies, we obtain a 
family \(\dK_{u,c}\) of objects of a definable \(1\)-groupoid \(\dP_c\), 
along with a map \(\dP_c\ra\dIso(\dX,\w(\dX))\) for each member \(\dX\) of 
\(\Gamma\). Furthermore, \(\dP_c\) itself is also in \(\Gamma\). Applying the 
above map to \(\dX=\dP_{c'}\), we obtain a family of  definable maps of 
\(1\)-groupoids \(a:\dP_c\ra\dIso(\dP_{c'},\w_c(\dP_{c'}))\).

The \(2\)-groupoid \(\dG\) is constructed as follows: \(\dG_0\) is the 
definable set of parameters \(c\) as above. Each groupoid \(\dP_c\) will be 
isomorphic to \(\LHom(*,c)\) in the corresponding \(\dG^*\). Let \(c,d\) be 
two elements of \(\dG_0\). Given an object \(u\) of \(\dP_c\), the map \(a\) 
above produces a groupoid \(\dK_u\) as an object of 
\(\dIso(\dP_d,\w_c(\dP_d))\), along with weak equivalences 
\(f:\dP_d\weq\dK_u\) and \(g:\w_c(\dP_d)\ra\dK_u\). Let \(\dQ_{u,v}\) be the 
set of morphisms in \(\dK_u\) with domain \(f(v)\). We set the morphisms from 
\(c\) to \(d\) to be the definable types space of \(\dQ_{u,v}\) over \(\tT\) 
(this is definable in \(\tT\) by stability of the embedding). Note that each 
such type includes, in particular, the information of the object of 
\(\w_c(\dP_d)\) which is the codomain of any realisation (as an element of 
\(\dQ_{u,v}\)).

Similarly, assume \(e\) is another element of \(\dG_0\), and \(w\) an object 
of \(\dP_e\). The elements of \(\dG_2\) with vertices \(c,d,e\) are defined 
as the types over \(\tT\) of triples \(\dQ_{u,v}\x\dQ_{v,w}\x\dQ_{u,w}\) 
(over all such \(u,v,w\)). The \(2\)-composition is defined similarly, by 
considering \(4\)-tuples. We skip the details of the construction, as well as 
the proof that the map determined by \(a\) is a weak equivalence.
\end{proof}

We mention also that this description can, in principle, be used to give an 
equivalent combinatorial definition of \(2\)-internality (similar to the 
original definition of internality), but I could not find one sufficiently 
pleasant to write.

\subsection{Questions}
I mention a few natural questions that I hope to address in the future.

\subsubsection{Structure of admissible internal covers}
The definable version of the \(2\)-groupoid \(\dG\) associated to a 
\(2\)-internal cover \(\tT^*\) of \(\tT\) was only sketched above. Assuming 
it is properly described, it still needs to be seen that \(\tT_\dG\) and 
\(\tT^*\) are, in some sense, equivalent. It will also be useful to describe 
the admissible covers (in either) as suitably defined ``higher local 
systems'' on \(\dG\). Both questions require that we name the precise closure 
properties on the collection of admissible covers: we already assumed that 
they are closed under finite inverse limit and definable mapping spaces, but 
it is not clear, for example, if some closure under quantifiers is required.

\subsubsection{Lax interpretations}
We had not run into the questions above because we required to objects of the 
\(2\)-groupoid \(I^2\) to be actual interpretations. This works well in the 
example of \(\tT_\dG\), but for general expansions it might make more sense 
to consider a larger class of ``lax interpretations'' that preserve only the 
admissible covers (possibly up to weak equivalence).

\subsubsection{Internal covers of \(\tT_\dG\)}
The requirement for introducing admissible covers was motivated above.  
However, it might still be the case in the case of \(\tT_\dG\), essentially 
all internal covers are the ones described (up to covers that come from the 
base \(\tT\)). Again, stating this precisely requires clarifying the 
structure of the collection of admissible covers.

\subsubsection{Relation to analyzability}
The \(2\)-groupoid \(\dG^*\) in the theory \(\tT\) provides an example of a 
\(2\)-analyzable set over \(\tT\). Can we describe (combinatorially) which 
\(2\)-analyzable covers occur in this way?

\subsubsection{Generalization to higher dimensions}
This is rather clear: one continues by induction, defining an 
\(i+1\)-groupoid associated to a stable expansion by taking into account 
\(i\)-internal covers, and then defining the expansion to be an \(i+1\)-cover 
if this groupoid is non-empty. However, some of the proofs given above will 
be difficult to generalize, and it would interesting to look for a smoother 
way. In any case, this only applies to each finite level, and it does not 
seem reasonable to expect a generalization to arbitrary \(\infty\)-groupoids.

\subsubsection{Structure at \(*\)}
We did not consider the structure of the groupoid \(\LHom_{\dG^*}(*,*)\) 
definable in \(\tT_\dG\). On top of the groupoid structure, composition gives 
it a structure of a monoidal category up to homotopy (i.e., the homotopy 
category is monoidal). It also acts on all the admissible covers, so it is 
really a higher analog of the binding group. However, we did not consider 
what could be a version of the Galois correspondence or of descent, as in the 
\(1\)-dimensional case.

\printbibliography

\end{document}